\newtheorem{theorem}{Theorem}[section]
\newtheorem{corollary}[theorem]{Corollary} 
\newtheorem{proposition}[theorem]{Proposition}
\newtheorem{observation}[theorem]{Observation}
\theoremstyle{definition}
\newtheorem{definition}[theorem]{Definition}
\newtheorem{conjecture}[theorem]{Conjecture}
\theoremstyle{remark}
\newtheorem{remark}[theorem]{Remark}
\newtheorem{example}[theorem]{Example}
\numberwithin{equation}{section}
\newcommand{\abs}[1]{\lvert#1\rvert}
\def\norm#1{\left\Vert#1\right\Vert}
\def\I {{\mathbb I}}
\def\C {{\mathbb C}}
\def\N{{\mathbb N}}
\def\K{{\mathbb K}}
\def\F{{\mathbb F}}
\def\Z {{\mathbb Z}}
\def\Id{{\mathbb{Id}}}
\def\R{{\mathbb R}}
\def\tr{{\mathrm{tr}}}
\def\e{\varepsilon}
\def\Aut{{\mbox{\rm Aut}\,}}
\def\H {{\mathscr H}}
\def\Th{{\mathrm{Th}\,}}
\def\The{{\mathrm{Th}^{\exists}}}
\def\Thc{{\mathrm{Th}_{c}}}
\newcounter{quest}
\def\under#1{{$\underline{\mbox{#1}}$}}
\begin{document}

\title[Hyperlinear and sofic groups]
{An introduction to hyperlinear and sofic groups}

\author[V.G. Pestov]{Vladimir G. Pestov}

\address{Department of Mathematics and Statistics, 
University of Ottawa, 585 King Edward Ave., Ottawa, Ontario, Canada K1N 6N5}

\email{vpest283@uottawa.ca}

\author[A. Kwiatkowska]{Aleksandra Kwiatkowska}

\address{Department of Mathematics,
University of Illinois at Urbana-Champaign,
1409 W. Green Street (MC-382), 
Urbana, Illinois 61801-2975, USA}

\email{akwiatk2@illinois.edu}


\thanks{{\it 2000 Mathematics Subject Classification:} 03C20, 20F69, 37B10, 46L10}


\maketitle

\section{Motivation: group matrix models in the sense of classical first-order logic} 

In these lectures, we will deal with a class of groups called {\em hyperlinear groups,} as well as its (possibly proper) subclass, that of {\em sofic groups.}
One natural way to get into this line of research is through the theory of operator algebras. Here, the hyperlinear groups are sometimes referred to as ``groups admitting matrix models''. This can be indeed interpreted as a genuine model-theoretic statement, within a suitable version of logic. Namely, a group $G$ is said to admit matrix models if every existential sentence of the first-order theory of $G$ is satisfied in matrix groups. 

What makes the concept interesting --- and difficult to work with --- is that at the matrix group end it is not the classical first-order logic that one has in mind, but rather a version of continuous logic with truth values in the unit interval $[0,1]$. By way of motivation, let us try to understand first what we get by considering a class of groups admitting matrix models in the sense of the traditional binary logic. 

The language of group theory, which we will denote $L$, is a first-order predicate calculus with equality, having one ternary predicate letter $S$ and a constant symbol $e$. The group operation is coded as follows: $S(x,y,z)$ if $xy=z$. In addition, we have variables, the equality symbol $=$, logical connectives, and quantifiers.
Now consider a (countable) group $G$. A formula is said to be a \emph{sentence} if all its variables are bound within quantifiers. These formulas say something definite about the structure of a group, hence the following definition.
The {\em theory} of $G$, denoted $\Th(G)$, is defined to be the set of all sentences of the predicate calculus $L$ which are valid in $G$. The fact that the language is first-order implies that the variables only range over $G$ (and not, for instance, over families of subsets of $G$). We will further denote by $\The(G)$ the subset of $\Th(G)$ consisting of all existential first-order sentences, that is, those of the form $\exists x_1\exists x_2\ldots\exists x_n\,\phi(x_1,\ldots,x_n)$.

Let us introduce an {\em ad hoc} notion. Say that $G$ {\em admits matrix models ``in the classical sense,''} if every quantifier-free (open) formula in first order logic can be satisfied in $GL(n,\K)$ for some $n$ and some field $\K$. Even more precisely: whenever $G\vDash\phi(g_1,g_2\ldots,g_k)$, where $\phi$ is an open formula and
$g_1,g_2,\ldots,g_k\in G $, then, for a suitable natural number $n$ and  some $g_1^\prime,g^\prime_2,\ldots,g^\prime_k\in GL(n,\K)$,
\[GL(n,\K)\vDash\phi(g^\prime_1,g^\prime_2,\ldots,g^\prime_k).\]
Here we take matrix groups to be as general as possible: all groups of the form $GL(n,\K)$, where $n\in\N$ and $\K$ is an arbitrary field, are allowed.

What groups admit matrix models ``in the classical sense''? It turns out this class can be described in a very transparent way, and does not in fact depend on the choice of a field $\K$. We obtain this description in the rest of the present Section. First, a preliminary observation.

\begin{observation}
\label{obs:1}
A group $G$ admits matrix models ``in the classical sense'' if and only if $G$ locally embeds into matrix groups, that is, for every finite $F\subseteq G$ there is a natural number $n$, a field $\K$, and an injective map $i\colon F\to GL(n,\K)$ so that, whenever $x,y\in F$ and $xy\in F$, one has
\[i(xy) = i(x)i(y).\]
\end{observation}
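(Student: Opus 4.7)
The plan is to treat the two implications separately. In the forward direction, I would encode the partial multiplication table of a finite $F \subseteq G$ as a single open formula whose satisfiability in a matrix group produces the desired partial embedding. In the reverse direction, I would fix a quantifier-free formula witnessed in $G$ and reduce its transfer to $GL(n,\K)$ to a local-embedding problem on a carefully chosen finite subset.

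For ($\Rightarrow$), let $F = \{g_1, \ldots, g_k\} \subseteq G$ be finite (WLOG containing $e$), and consider
\[
\phi(x_1, \ldots, x_k) \;\equiv\; \bigwedge_{\substack{1 \le a,b,c \le k \\ g_a g_b = g_c}} S(x_a, x_b, x_c) \;\wedge\; \bigwedge_{a \ne b} \neg(x_a = x_b).
\]
Since $G \vDash \phi(g_1, \ldots, g_k)$, the matrix-model hypothesis yields $n$, $\K$, and $h_1, \ldots, h_k \in GL(n,\K)$ with $GL(n,\K) \vDash \phi(h_1, \ldots, h_k)$. The assignment $\iota(g_a) := h_a$ is then injective and satisfies $\iota(xy) = \iota(x)\iota(y)$ whenever $x, y, xy \in F$, as required. (If $e = g_1 \in F$, then $g_1 g_1 = g_1$ forces $h_1^2 = h_1$, so $h_1 = I_n$ since the only invertible idempotent is the identity; no separate clause is needed.)

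For ($\Leftarrow$), let $\phi(x_1, \ldots, x_k)$ be quantifier-free with $G \vDash \phi(g_1, \ldots, g_k)$, and set
\[
F \;=\; \{e, g_1, \ldots, g_k\} \cup \{g_a g_b : 1 \le a, b \le k\}.
\]
Local embeddability furnishes an injective $\iota : F \to GL(n,\K)$ respecting multiplication within $F$; in particular $\iota(e) = I_n$. Put $g'_a = \iota(g_a)$. Since $\phi$ is a Boolean combination of atomic formulas of the forms $t_1 = t_2$ and $S(t_1, t_2, t_3)$ with each $t_i \in \{x_1, \ldots, x_k, e\}$, it suffices to verify preservation on atoms: equalities transfer by injectivity of $\iota$ (using $\iota(e) = I_n$), and for any atom $S(t_1, t_2, t_3)$ the interpretations of $t_1, t_2$ lie in $\{e, g_1, \ldots, g_k\}$, so their product lies in $F$, and the partial-homomorphism property combined with injectivity transfers both the positive and the negative case.

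The main subtlety sits in the reverse direction, namely the decision to inflate $F$ by all pairwise products $g_a g_b$. Without this step, a negated atom $\neg S(x_a, x_b, x_l)$ that holds in $G$ (because $g_a g_b \ne g_l$ but $g_a g_b \notin F$) could fail in $GL(n,\K)$, since nothing in the partial-embedding axioms would prevent the accident $\iota(g_a)\iota(g_b) = \iota(g_l)$. Including all such products forces every atomic relation to be controlled by the partial-homomorphism rule, and the rest of the argument is then a routine induction on the Boolean structure of $\phi$.
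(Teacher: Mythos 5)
Your proof is correct and follows essentially the same route as the paper's: the forward direction encodes the partial multiplication table of $F$ as a single open formula whose satisfaction in some $GL(n,\K)$ yields the local monomorphism, and the reverse direction turns on making the actual products $g_ag_b$ available to the local embedding so that negated $S$-atoms transfer. The only cosmetic difference is that the paper handles this by rewriting each $\neg S(x,y,z)$ as $S(x,y,w)\wedge\neg(w=z)$ with a fresh variable $w$ interpreted as the product in $G$, whereas you enlarge $F$ by all pairwise products and check atoms directly --- two phrasings of the same idea.
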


Such a mapping $i$ as above is called a {\em local monomorphism,} or a {\em partially defined monomorphism.} 

\begin{proof} 
The {\em necessity} follows from the fact that the conjunction of all the formulas of the form $\neg(g_i=g_j)$, $i\neq j$, as well as $S(g_i,g_j,g_k)$, where $g_i,g_j,g_k\in F$ and $g_ig_j=g_k$, $1\leq i,j,k\leq n$, is satisfied in $G$ and so in a suitable linear group. Here we denote $F=\{g_1,g_2,\ldots,g_n\}$ and $i(g_i)=g^\prime_i$, where $g_i^\prime$ are chosen as in the paragraph preceding the Observation.  

To prove {\em sufficiency}, let $\phi=\phi(g_1,g_2\ldots,g_k)$ be an open formula satisfied in $G$. Write $\phi$ in a disjunctive normal form. The atomic formulas are of the form $\neg(x=y)$, $S(x,y,z)$, or $\neg S(x,y,z)$. All occurences of atoms of the type $\neg S(x,y,z)$ can be replaced with formulas $S(x,y,w)\wedge \neg(w=z)$, where $w$ is a new variable suitably interpreted in $G$. Denote $\phi^\prime$ the resulting open formula, having probably more variables, which is satisfied in $G$. This $\phi^\prime$ is written in a disjunctive normal form, with atomic formulas of the kind either $\neg(x=y)$ or $S(x,y,z)$. As a consequence of our assumptions, a conjunctive clause of such atoms is satisfied in some $GL(n,\K)$, and the same of course applies to the disjunction of a set of conjuctive clauses. Since $GL(n,\K)\vDash\phi^\prime$, the formula $\phi$ is satisfied in $GL(n,\K)$ as well.
\end{proof}

To take the next step, we introduce the following notion.

\begin{definition}\label{resfin}
A group $G$ is {\em residually finite} if it satisfies one of the following equivalent conditions:
\begin{enumerate}
\item for every $g\in G$, $g\neq e$ there exists a normal subgroup $N$ of finite index such that $g\notin N$,
\item for every finite subset $F\subseteq G$ there is a homomorphism $h$ from $G$ to a finite group with $h\restriction F$ being an injection,    \label{homom}
\item $G$ is a subgroup of a direct product of a family of finite groups.
\end{enumerate}
\end{definition}

\begin{proof}[Equivalence of the conditions]
(1)$\Rightarrow$(3): For each $g\in G\setminus\{e\}$, choose a normal subgroup $N_g$ of finite index not containing $g$, and let $\pi_g\colon G\to G/N_g$ denote the corresponding quotient homomorphism. One has $\pi_g(g)\neq e$.
Consequently, the diagonal product of all $\pi_g$, sending each $x\in G$ to the element $(\pi_g(x))_{g\in G\setminus\{e\}}$ of the direct product of quotient groups, is a monomorphism. 
(3)$\Rightarrow$(2): here $h$ is a projection on the product of a suitable finite subfamily of groups. (2)$\Rightarrow$(1): take $F=\{e,g\}$.
\end{proof}

\begin{example}\label{res}
\begin{enumerate}
\item Finite groups are residually finite;
\item finitely generated abelian groups are residually finite;
\item free groups are residually finite. \label{free}
\end{enumerate}
\end{example}

\begin{proof}[Proof of (\ref{free}) in Example \ref{res}]
The free group of countably many generators $F_\infty$ can be embedded into the free group on two generators $F_2$. (Namely, if $a,b$ are free generators of $F_2$, then the conjugates $b$, $aba^{-1}$, $a^2ba^{-2}$, $\ldots$ are free generators of a subgroup they generate.) 
Therefore it it enough to do the 
proof for $F_2$. First we show that 
\[ F_2 < SL(2,\mathbb{Z}),\]
where $SL(2,\mathbb{Z})$ denotes the group of  $2\times 2$ matrices of determinant equal to 1.
We will prove that 
\begin{displaymath}
A=
\left( \begin{array}{cc}
1 & 2  \\
0 & 1  \\
\end{array} \right)
\end{displaymath}
and
\begin{displaymath}
B =
\left( \begin{array}{cc}
1 & 0  \\
2 & 1  \\
\end{array} \right)
\end{displaymath}
are free generators.

Consider subspaces of $\mathbb{R}^2$, $X=\{(x,y)\colon \abs x >\abs y \}$ and 
$Y=\{(x,y)\colon \abs x < \abs y \}$.
Note that for every $n\in\mathbb{Z}$, $A^n$ maps $Y$ into $X$, and $B^n$ maps $X$ into $Y$. 

Although it is obvious that $AB\neq \Id$, another way to see this is to observe that the conjugate $A^2BA^{-1}$ maps $Y$ into $X$. 
Hence $A^2BA^{-1}\neq\Id$, and therefore $AB\neq \Id$. The argument easily generalizes to show that an arbitrary word $A^{n_1}B^{m_1}\ldots A^{n_k}B^{m_k}\neq \I$.

To finish the proof, we have to show that $SL(2, \mathbb{Z})$ is residually finite. For every prime $p$ the quotient homomorphism 
$h_p\colon SL(2, \mathbb{Z})\to SL(2, \mathbb{Z}_p)$ sends a matrix over $\Z$ to one over $\Z_p$ by taking its entries mod $p$. The family of homomorphisms $h_p$ is easily seen to separate points in $SL(2,\Z)$.
\end{proof}

Now comes a classical result. 

\begin{theorem}[Malcev]\label{Malcev}
\label{th:malcev}
Every finitely generated subgroup $G$ of the linear group $GL(n,\mathbb{K})$ is residually finite.
\end{theorem}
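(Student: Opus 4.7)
The plan is to reduce to a problem about a finitely generated commutative ring, and then separate any given nontrivial element of $G$ from the identity by a homomorphism to some $GL(n, \F)$ with $\F$ a finite field, using condition (2) of Definition \ref{resfin}.

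First I would let $g_1, \ldots, g_m \in GL(n, \K)$ be a finite generating set of $G$, and let $R$ be the subring of $\K$ generated by the (finitely many) matrix entries of $g_1, \ldots, g_m$ together with the entries of $g_1^{-1}, \ldots, g_m^{-1}$. Then $R$ is a finitely generated $\Z$-algebra (or finitely generated $\F_p$-algebra, in positive characteristic) which is an integral domain, and $G \leq GL(n, R)$. Fix an arbitrary $g \in G$ with $g \neq e$; the matrix $g - \Id$ has at least one non-zero entry $a \in R$.

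Next I would construct a finite quotient of $R$ on which $a$ survives as a non-zero element. Localize at $a$ to form the ring $R[1/a]$, which is still a non-zero finitely generated $\Z$-algebra (resp.\ $\F_p$-algebra). Pick any maximal ideal $M' \subseteq R[1/a]$ and set $M = M' \cap R$. Since $a$ is a unit in $R[1/a]$, we have $a \notin M'$, hence $a \notin M$. The domain $R/M$ embeds into $R[1/a]/M'$. The key algebraic input is the Nullstellensatz-type statement that a field which is finitely generated as a $\Z$-algebra (or $\F_p$-algebra) must be finite; granting this, $R[1/a]/M'$ is a finite field $\F$, and therefore $R/M$ is a finite integral domain, i.e.\ also a finite field. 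The composition
\[
G \hookrightarrow GL(n, R) \longrightarrow GL(n, R/M)
\]
is a homomorphism into a finite group, and it sends $g$ to a non-identity element because the image of $g - \Id$ still has the entry $a \neq 0$ modulo $M$. Applying this construction to each $g \in G \setminus \{e\}$ verifies condition (1) of Definition \ref{resfin}.

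The main obstacle is the Nullstellensatz fact cited above (sometimes called the arithmetic Nullstellensatz, or Zariski's lemma in the $\Z$-algebra version): if $\F$ is a field and a finitely generated $\Z$-algebra, then $\F$ is finite. One derives this by showing first that the characteristic of $\F$ must be positive (otherwise $\Q \subseteq \F$, and one obtains a contradiction with the fact that $\Q$ is not finitely generated as a $\Z$-algebra, via a Noether normalization / transcendence-degree argument applied to $\F$ over $\Q$), and then that $\F$ is a finite extension of its prime field $\F_p$, by Zariski's lemma for $\F_p$-algebras. Once this tool is in place, the rest of the proof is essentially the routine commutative-algebra reduction described above.
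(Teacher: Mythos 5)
Your proposal is correct and follows essentially the same route as the paper's proof: pass to the finitely generated subring $R\subseteq\K$ containing the entries of the generators and their inverses, reduce modulo a maximal ideal, and invoke the fact that a field which is finitely generated as a ring is finite. The only (immaterial) difference is bookkeeping: you localize at the witnessing entry $a$ of $g-\Id$ before choosing the maximal ideal, whereas the paper adjoins the relevant inverses to $R$ at the outset so that every maximal ideal works.
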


\begin{proof}[Sketch of a proof]
Let $A_1, A_2,\ldots, A_n$ be any finite set of matrices generating $G$. Without loss in generality, assume that the identity matrix is among them. 
Let $X$ denote the set of elements of $\mathbb{K}$ formed as follows: whenever $k$ is an entry of $A_iA_j^{-1}$, then we put $k,k^{-1},k-1$ and $(k-1)^{-1}$ into $X$ whenever they are defined. 
Let $R\subseteq \mathbb{K}$ be the ring generated by $X$. It is an integral domain, and so for every maximal ideal $I$ of $R$ the quotient ring  $R/I$ is a field. 

Consider a natural homomorphism $\phi\colon G \to GL(n,R/I)$ induced by the quotient $\mod I$.  Note that when an element 
has its inverse in $R$, it cannot be in $I$, and so by the choice of generators of $R$, none of matrices $A_iA_j^{-1}$ is equal to the
identity in $GL(n,\mathbb{R}/I)$. Thus, $\phi(A_i)\neq \phi(A_j)$, and since $A_i$ were arbitrary, we conclude that homomorphisms $\phi$ as above separate points of $G$.

It remains to notice that the field $R/I$ is finite, because a finitely generated ring that is a field is finite. This part of the proof requires most effort, and the details can be found e.g.\, in \cite{BO}, Theorem  6.4.12.
\end{proof}

Our purpose is served by the following concept, which is more general than residual finiteness.

\begin{definition}[Vershik and Gordon \cite{VG}]
A group $G$ is said to be {\em locally embeddable into finite groups} (an {\em LEF group}, for short) if for every finite subset $F\subseteq G$ there is a partially defined monomorphism $i$ of $F$ into a finite group.
\end{definition}

\begin{remark}
Every residually finite group is LEF, which is immediate from (\ref{homom}).
\end{remark}

Another source of LEF groups is given by the following notion.

\begin{definition}
A group is {\em locally finite} if every finite set is contained in a finite subgroup
(i.e. if every finitely generated subgroup is finite).
\end{definition}

\begin{example}
$S^{fin}_\infty$= the group of finitely supported bijections of $\N$ is locally finite, but not residually finite. Indeed, the only normal subgroup of $S^{fin}_\infty$ is  the group of finitely supported bijections of $\N$ of even sign.
\end{example}

The following result is folk knowledge.

\begin{theorem}
\label{th:lef}
For a group $G$ the following are equivalent:
\begin{enumerate}
\item $G$ admits matrix models ``in the classical sense,''
\item $G$ is LEF.
\end{enumerate}
\end{theorem}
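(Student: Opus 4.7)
The plan is to reduce both implications to the reformulation given by Observation \ref{obs:1}: a group admits matrix models ``in the classical sense'' if and only if it locally embeds into matrix groups in the partial-monomorphism sense. The task then becomes the comparison of two local-embeddability classes, one using $GL(n,\K)$ as targets and the other using finite groups.

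For the direction (2)$\Rightarrow$(1), I would simply observe that every finite group $H$ embeds into some $GL(n,\K)$, for instance via Cayley's theorem as a group of $\abs{H}\times\abs{H}$ permutation matrices over any field $\K$. Composing a partial monomorphism $i\colon F\to H$ coming from the LEF property with such a faithful representation yields a partial monomorphism of $F$ into a matrix group, so $G$ locally embeds into matrix groups. By Observation \ref{obs:1} this gives (1).

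For the harder direction (1)$\Rightarrow$(2), I would start with a finite subset $F\subseteq G$ and, invoking Observation \ref{obs:1}, fix a partial monomorphism $i\colon F\to GL(n,\K)$ for some $n$ and some field $\K$. Let $H$ denote the subgroup of $GL(n,\K)$ generated by the finite set $i(F)$; it is a finitely generated linear group. Here is where Malcev's theorem (Theorem \ref{Malcev}) enters: $H$ is residually finite. Applying condition (\ref{homom}) of Definition \ref{resfin} to the finite subset $i(F)\subseteq H$ yields a homomorphism $h\colon H\to K$ into a finite group $K$ that is injective on $i(F)$. The composition $h\circ i\colon F\to K$ is then a partial monomorphism into a finite group, because injectivity on $F$ follows from injectivity of $i$ and of $h\!\restriction\!i(F)$, and preservation of multiplication is automatic: if $x,y,xy\in F$ then $(h\circ i)(xy)=h(i(x)i(y))=h(i(x))h(i(y))$ since $i$ is a partial monomorphism and $h$ is a homomorphism. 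This realizes $F$ inside a finite group, so $G$ is LEF.

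I expect the only nontrivial step to be the appeal to Malcev's theorem, which handles the jump from ``locally linear'' to ``locally finite-group-valued''; everything else is bookkeeping, in particular checking that composing a partial monomorphism with a homomorphism again gives a partial monomorphism and that finite groups sit inside $GL(n,\K)$ as permutation matrices.
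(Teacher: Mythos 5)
Your proof is correct and follows essentially the same route as the paper: the forward direction applies Malcev's theorem to the finitely generated linear group $\langle i(F)\rangle$ and composes the local monomorphism with a finite quotient, while the converse embeds the finite target group into $GL(n,\K)$ via permutation matrices. The only cosmetic difference is that you route both directions explicitly through Observation \ref{obs:1}, which the paper's proof uses implicitly.
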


\begin{proof}
(i) $\Rightarrow$ (ii): Let $F$ be a finite subset of $G$. Then there is a local monomorpism $i\colon F\to GL(n,\K)$. The group $\langle i(F)\rangle$ is
residually finite by  Theorem \ref{Malcev}, hence there is a homomorphism $j\colon \langle i(F)\rangle\to H$, 
where $H$ is a finite group, whose restriction to $i(F)$ is injective. The composition $j\circ i \colon F\to H$ is the required partial monomorphism.

(i) $\Leftarrow$ (ii): Let $F$ be a finite subset of $G$. There is a partial monomorphism $i\colon F\to H$, where $H$ is a finite group. Let $j\colon H\to S_n$ be an embedding into a finite permutation group 
 (every finite group
embeds into some $S_n$). To finish the proof we notice that $S_n$ embeds into $GL(n,\K)$ for an arbitrary field $\K$: to a permutation $\sigma$ we assign the
matrix $A=(a_{ij})_{i\leq n,j\leq n}$ by letting $a_{ij}=1$ if $\sigma(i)=j$, and $a_{ij}=0$ otherwise.
\end{proof}

Notice that the above proof, the choice of a field $\K$ does not matter.

To finish this introductory section, we will show that not every group is LEF.
Letting $N(r_1,r_2,\ldots, r_m)$ denote the normal subgroup generated by elements $r_1,r_2,\ldots, r_m$, a group $G$ is {\em finitely presented} when $G \cong F_n/N(r_1,r_2,\ldots, r_m)$, where 
$r_1,r_2,\ldots ,r_m$ is a finite collection of relators and $n\in\N$. 

\begin{proposition}\label{nolef}
Suppose $G$ is an infinite simple finitely presented group. Then $G$ is \under{not} LEF.
\end{proposition}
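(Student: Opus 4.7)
The plan is a proof by contradiction: assume $G$ is both LEF and infinite simple with finite presentation, and construct a nontrivial homomorphism $G \to H$ for some finite group $H$, which then contradicts simplicity plus infinity.

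Fix a finite presentation $G \cong F_n/N(r_1,\ldots,r_m)$ with free generators $x_1,\ldots,x_n$ of $F_n$, and write each relator as a word $r_j = y_{j,1}y_{j,2}\cdots y_{j,\ell_j}$ where each $y_{j,k}\in\{x_1^{\pm 1},\ldots,x_n^{\pm 1}\}$. Let $F\subseteq G$ be the finite set consisting of the identity $e$, the images of all $x_i^{\pm 1}$, and, for every $j$ and every $k\le\ell_j$, the image in $G$ of every initial subword $y_{j,1}\cdots y_{j,k}$ (so in particular $F$ contains the element $r_j = e$, read as the full product). Applying the LEF property to $F$, I obtain a finite group $H$ and a partial monomorphism $i\colon F\to H$.

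Next, I would use freeness of $F_n$ to define a homomorphism $\phi\colon F_n\to H$ by $\phi(x_k)=i(x_k)$. The point of having included every initial subword of each relator in $F$ is that a straightforward induction on $k$, using the multiplicativity clause of the partial monomorphism (whenever $u,v,uv\in F$ we have $i(u)i(v)=i(uv)$), yields $\phi(y_{j,1}\cdots y_{j,k}) = i(y_{j,1}\cdots y_{j,k})$ for all $k\le\ell_j$. Taking $k=\ell_j$ gives $\phi(r_j) = i(e) = e_H$ for every $j$, so $\phi$ factors through $G$, producing a homomorphism $\bar\phi\colon G\to H$.

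Finally, simplicity of $G$ forces $\ker\bar\phi$ to be either trivial or all of $G$. If it is trivial, then $G$ embeds into the finite group $H$, contradicting $G$ being infinite. If it is all of $G$, then $\bar\phi$ is trivial and in particular $i(x_k) = \bar\phi(x_k) = e_H$ for every generator $x_k$; but $G$ is nontrivial (being infinite), so at least one generator $x_k$ is distinct from $e$ in $G$, and then injectivity of $i$ on $F \supseteq \{e, x_k\}$ gives $i(x_k) \neq e_H = i(e)$, a contradiction. The main technical point — and the only place any care is needed — is ensuring $F$ is rich enough that $\phi$ descends: one must include \emph{all} initial subwords of each relator, not merely the generators, because the partial monomorphism respects products only when both factors and the product all lie in $F$.
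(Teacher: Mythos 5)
Your proof is correct and follows essentially the same route as the paper's: the paper applies the LEF property to the image of a word-metric ball $B_R$ large enough to contain the relators (hence all their prefixes), extends the partial monomorphism to a homomorphism $F_n\to H$ via freeness, and derives the same contradiction with simplicity and infiniteness; your explicit inclusion of the initial subwords of the relators is just a more economical choice of the same finite set. The only points left implicit (that $i(e)=e_H$ and that $i(\pi(x_k^{-1}))=i(\pi(x_k))^{-1}$, both immediate from the partial-monomorphism property since $e\in F$) are harmless.
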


\begin{proof}
Represent $G$ as $F_n/N$, where, for short, $N=N(r_1,r_2,\ldots ,r_m)$. 
Let $X$ be the set of free generators of $F_n$. Denote by $d$ the word metric on $F_n$ with respect to $X$, given by
\[ d(x,y)=\min\{i\colon y=b_1b_2\ldots b_ix;\, b_1,b_2,\ldots, b_i\in X\cup X^{-1}\}. \]
Let $R$ be so large that the $R$-ball $B_R$ around identity
in $F_n$ contains the relators $r_1,r_2,\ldots, r_m$. Denote $\pi\colon F_n\to F_n/N$ the canonical homomorphism and put
$\widetilde{B}_R=\pi(B_R)$.

Suppose that $G$ is LEF. Let $j\colon \widetilde{B}_R\to H$ be an injection into a finite group $H$ preserving partial multiplication.
Define a homomorphism $h\colon F_n\to H$ by the condiion $h(x)=j\circ\pi(x)$, $x\in X$. The kernel $N'=\ker(h)$ is a proper subset of $F_n$ and a proper superset of $N$ (as $F_n/N$ is infinite). 
This contradicts the simplicity of $G$.
\end{proof}

\begin{example}
{\em Thompson's groups:}

$F$ = all orientation-preserving piecewise linear homeomorphisms of $[0,1]$ with finitely many non-smooth points which are all contained in the set of dyadic rationals, and the slopes being integer powers of $2$.

$T$ = all orientation-preserving piecewise linear homeomorphisms of $\mathbb{T}=\R/\Z$  with finitely many non-smooth points which are contained in dyadic rationals, and slopes being integer powers of $2$.

$V$ = all orientation-preserving piecewise linear bijections of $[0,1]$ (not necessarily continuous), with finitely many points of discontinuity, all contained in the set of dyadic rationals, the slopes being integer powers of $2$.

Every group $F,T,V$ is finitely presented. Moreover $T$ and $V$, and the commutator of $F$ are simple. A standard reference to Thompson's groups is the survey by Cannon, Floyd and Parry \cite{CFP}.

By Proposition \ref{nolef}, Thompson's groups are not LEF.
\end{example}

\section{\label{s:ultraprod}Algebraic ultraproducts}

An important role played by ultraproducts in logic and model theory is well known. Hyperlinear/sofic groups are no exception, and in the subsequent sections ultraproducts of metric groups will have a significant impact. In this section we will discuss algebraic ultraproducts of groups, and show how to reformulate in their language the existence of matrix models ``in the classical sense''.  In particular, the ultraproduct technique allows for a simpler proof of Theorem \ref{th:lef}, bypassing Malcev's theorem \ref{th:malcev}.

Recall that, given a family $G_\alpha$, $\alpha\in A$,s of groups and an ultrafilter $\mathcal U$ on the index set $A$, the ({\em algebraic}) {\em ultraproduct} of the family $(G_\alpha)$ is defined as follows:
\[\left(\prod_{\alpha\in A}G_{\alpha}\right)_{\mathcal U}
= \left(\prod_{\alpha\in A}G_{\alpha}\right)/N_{\mathcal U},\]
where 
\[N_{\mathcal U} = \{x\colon x\sim_{\mathcal U} e\}\]
and
\[x\sim_{\mathcal U} y \iff \{\alpha\in A\colon x_{\alpha}=y_{\alpha}\}\in {\mathcal U}.\]
Notice that $N_{\mathcal U}$ is a normal subgroup of the direct product of groups $G_{\alpha}$.

In a similar way, one can define an algebraic ultraproduct of a family of any algebraic structures of the same signature. In particular, if $\K_\alpha$, $\alpha\in A$ are fields, then the subset ${\mathcal I}_{\mathcal U}=\{x\colon x\sim_{\mathcal U} 0\}$ is a maximal ideal of the direct product ring $\prod_{\alpha\in A}\K_{\alpha}$, and the corresponding quotient field $\left(\prod_{\alpha\in A}\K_{\alpha}\right)_{\mathcal U}$ is called the ultraproduct of the fields $\K_\alpha$ modulo $\mathcal U$. 
(It is useful to notice that the underlying set of the algebraic ultraproduct is independent of the algebraic structure, because only $=$ is used in the definition of the equivalence relation $\sim_{\mathcal U}$.)

Now it is easy to make the following observations, going back to Jerzy \L o\'s \cite{los}.

\begin{enumerate}
\item
\label{obs(1)}
Let $\mathcal U$ be a nonprincipal ultrafilter on the natural numbers, and let $A = \cup_{n}A_n$ be the union of an increasing chain of some algebraic structures (e.g.\, groups, fields, ...). Then $A$ canonically embeds in the ultraproduct $(\prod_nA_n)_{\mathcal U}$. (To every $a\in A$ one associates an equivalence class containing any eventually constant sequence stabilizing at $a$.)
\item  
\label{obs(2)}
Let $n\in\N$ and let $\K_\alpha$, $\alpha\in A$ be fields. Then for every ultrafilter $\mathcal U$ on $A$ the groups $\left(\prod_{\alpha\in A}GL(n,\K_\alpha)\right)_{\mathcal U}$ and $GL\left(n, \left(\prod_{\alpha\in A}\K_{\alpha}\right)_{\mathcal U}\right)$ are isomorphic. \par
(The canonical ring isomorphism between $M_n\left(\prod\K_{\alpha}\right)$ and $\prod_{\alpha} M_n(\K_\alpha)$ factors through the relation $\sim_{\mathcal U}$ to a ring isomorphism between $\left(\prod_{\alpha\in A}M_n(\K_\alpha)\right)_{\mathcal U}$ and $M_n\left(\left(\prod_{\alpha\in A}\K_{\alpha}\right)_{\mathcal U}\right)$. The ultraproduct of the general linear groups of $K_\alpha$ sits inside the former ring as the group of all invertible elements, while the general linear group of the ultraproduct of $\K_\alpha$ is by its very definition the group of invertible elements of the latter ring.)

\item
\label{obs(3)} The ultraproduct of a family of ultraproducts is again an ultraproduct.

\item
\label{obs(4)} The ultraproduct of a family of algebraically closed fields is algebraically closed. 

\item\label{obs(5)}
Let $X_n$ be non-empty finite sets and let $\mathcal U$ be an ultrafilter on the set of natural numbers. If for every $N\in\N$ $\{n\in\N\colon \abs{X_n}<N\}\notin {\mathcal U}$, then the cardinality of the ultraproduct of $X_n$ mod $\mathcal U$ equals $\mathfrak c$. 
\item An algebraic ultraproduct of a family of LEF groups is again an LEF group.
\end{enumerate}

The following result is weaker than Malcev's theorem (of which it is a corollary thanks to observation (2) above), but is nonetheless strong enough for our purposes.

\begin{observation}
Every field $\K$ embeds, as a subfield, into a suitable ultraproduct of a family of finite fields. 
\label{obs:embeds}
\end{observation}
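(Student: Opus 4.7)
The plan is to imitate, for the field $\K$ itself, the commutative-algebra manoeuvre used in the sketch of Theorem~\ref{Malcev}, and then to bundle the resulting partial quotients together using a well-chosen ultrafilter on the directed set $A$ of finite subsets of $\K$ ordered by inclusion.

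First, for each finite $F\subseteq\K$, I would form the subring $R_F\subseteq\K$ generated by $F$ together with the inverses of the nonzero elements of $F$ and the inverses of the nonzero differences $a-b$, $a\neq b\in F$. Then $R_F$ is a finitely generated integral domain, so for any maximal ideal $M_F$ the quotient $\K_F:=R_F/M_F$ is a field; and by the same Nullstellensatz-type lemma that was cited at the end of the sketch of Theorem~\ref{Malcev}---a finitely generated commutative ring which happens to be a field is finite---$\K_F$ is in fact a finite field. The adjoined inverses force the quotient homomorphism $\phi_F\colon R_F\to\K_F$ to separate distinct elements of $F$, since for $a\neq b\in F$ the difference $a-b$ is a unit of $R_F$ and so cannot lie in $M_F$.

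Next, I would fix an ultrafilter $\mathcal U$ on $A$ extending the filter base consisting of the sets $\{F\in A:T\subseteq F\}$ for finite $T\subseteq\K$ (these form a filter base since $\{F:T_1\subseteq F\}\cap\{F:T_2\subseteq F\}=\{F:T_1\cup T_2\subseteq F\}$), and define
\[\psi\colon\K\longrightarrow\left(\prod_{F\in A}\K_F\right)_{\mathcal U},\qquad \psi(k)=\bigl[(\phi_F(k))_{F\in A}\bigr],\]
with the convention $\phi_F(k):=0$ whenever $k\notin R_F$. The usual ultrafilter gymnastics would then finish the job: to check that $\psi$ respects $+$ and $\cdot$ one notes that $\phi_F(k_1\ast k_2)=\phi_F(k_1)\ast\phi_F(k_2)$ on the set $\{F:\{k_1,k_2,k_1\ast k_2\}\subseteq F\}\in\mathcal U$, and injectivity comes from the separation property of $\phi_F$ applied on $\{F:\{k_1,k_2\}\subseteq F\}\in\mathcal U$. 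Since the ultraproduct of fields is again a field, $\psi$ would exhibit $\K$ as a subfield of $(\prod_{F\in A}\K_F)_{\mathcal U}$.

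The only genuine obstacle I see is the cited commutative-algebra lemma that a finitely generated commutative ring which is a field must be finite; given it, the rest of the argument is purely formal ultrafilter assembly.
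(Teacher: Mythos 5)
Your proof is correct, but it takes a genuinely different route from the one in the paper. You run the Malcev-style commutative-algebra argument directly on $\K$: for each finite $F\subseteq\K$ you build a finitely generated domain $R_F$ in which the relevant differences are units, invoke Zariski's lemma (a finitely generated ring that is a field is finite) to get a finite residue field separating $F$, and then assemble these partial quotients with an ultrafilter on the directed set of finite subsets of $\K$. All the steps check out: the separation of $F$ by $\phi_F$, the fact that each ring identity holds on an ultrafilter-large set of indices, and the irrelevance of the arbitrary convention $\phi_F(k)=0$ off $R_F$ are all handled correctly, and the argument works uniformly for $\K$ of arbitrary cardinality. The paper instead avoids Zariski's lemma entirely: it builds, as an iterated ultraproduct of finite fields, an algebraically closed field of each characteristic with transcendence degree $\mathfrak c$, and embeds $\K$ into it via Steinitz's theorem --- at the price of explicitly treating only the case $\abs{\K}\le\mathfrak c$ and relegating the general case to the reader. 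The trade-off is worth noting in context: the paper presents Observation \ref{obs:embeds} precisely so that the ultraproduct characterization of LEF groups can be obtained while \emph{bypassing} Malcev's theorem \ref{Malcev}, and your proof re-imports the hardest ingredient of Malcev's proof (the finiteness of finitely generated fields), so it proves the Observation but does not serve that particular expository purpose; on the other hand, it is shorter, needs no Steinitz and no transcendence-degree bookkeeping, and settles all cardinalities at once.
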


\begin{proof}
We will only give an argument in the case where the cardinality of $\K$ does not exceed that of the continuum, leaving an extension to the general case to the reader. Let at first $p>0$ be a positive characteristic. Select an increasing chain of finite algebraic extensions of $\F_p$ whose union is the algebraic closure, $\overline{\F_p}$, of $\F_p$ (e.g.\, $(\F_{p^k})$), and fix a free ultrafilter on $\N$. The ultraproduct modulo $\mathcal U$ of finite fields forming this chain  contains $\overline{\F_p}$ by (\ref{obs(1)}). By (\ref{obs(3)}), there is an ultraproduct of the family $(\F_p)$ containing a non-trivial ultrapower of $\overline{\F_p}$ as a subfield. This ultrapower, denote it $\K_p$, is an algebraically closed field by (\ref{obs(4)}), and its transcendence degree is $\mathfrak c$ by force of (\ref{obs(5)}). Since in a given characteristic two algebraically closed fields of the same transcendence degree are isomorphic (Steinitz' theorem),
our result now follows in the case of prime characteristic. To settle the case of characteristic zero, notice that the ultraproduct of all fields $\K_p$ modulo a nonprincipal ultrafilter over the prime numbers is an algebraically closed field of characteristic zero and of transcendence degree continuum.
\end{proof}

In the following strengthening of Theorem \ref{th:lef}, the equivalence (\ref{th:classical:1})$\iff$(\ref{th:classical:4}) is an immediate consequence of a well-known general result in logic, see Lemma 3.8 in Chapter  9 \cite{BS}, but this is not the main point here.

\begin{theorem}
For a group $G$ the following are equivalent:
\begin{enumerate}
\item \label{th:classical:1}
$G$ admits matrix models ``in the classical sense'', that is, every existential sentence from the first-order theory of $G$ is valid in some matrix group.
\item \label{th:classical:4}
$G<\left(\prod_iGL(n_i,\K_i)\right)_{\mathcal{U}}$ for some family of fields $\K_i$, natural numbers $n_i$, and an ultrafilter $\mathcal{U}$.
\item \label{th:classical:2}
$G$ is LEF.
\item \label{th:classical:5}
$G$ embeds into the algebraic ultraproduct of a family of permutation groups of finite rank.
\item \label{th:classical:3}
For every field $\K$, $G<\left(\prod_iGL(n_i,\K)\right)_{\mathcal{U}}$ for a suitably large index set and a suitable ultrafilter $\mathcal U$.
\end{enumerate}
\label{th:classical}
\end{theorem}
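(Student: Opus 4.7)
Since $(1) \Leftrightarrow (3)$ is already Theorem~\ref{th:lef}, it suffices to close the cycle
\begin{equation*}
(3) \Rightarrow (4) \Rightarrow (5) \Rightarrow (2) \Rightarrow (1).
\end{equation*}
Of these four implications, $(5) \Rightarrow (2)$ is a tautology, $(4) \Rightarrow (5)$ reduces to permutation matrices, and $(2) \Rightarrow (1)$ is the elementary form of \L o\'s's theorem for existential sentences; the substantive content is concentrated in $(3) \Rightarrow (4)$.

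\textbf{The main step $(3) \Rightarrow (4)$.} I would index by the directed poset $\mathcal F$ of all finite subsets of $G$. Assuming $G$ is LEF, for each $F \in \mathcal F$ pick a partial monomorphism $i_F \colon F \to H_F$ into a finite group; post-composing with Cayley's embedding we may assume $H_F = S_{n_F}$. For each $g \in G$ set $A_g = \{F \in \mathcal F : g \in F\}$. The family $\{A_g : g \in G\}$ has the finite intersection property, since $A_{g_1} \cap \cdots \cap A_{g_k} \ni \{g_1,\ldots,g_k\}$, so it extends to an ultrafilter $\mathcal U$ on $\mathcal F$. Define $\tilde i_F(g) := i_F(g)$ if $g \in F$ and $\tilde i_F(g) := e$ otherwise, and set $\iota \colon G \to \bigl(\prod_{F \in \mathcal F} S_{n_F}\bigr)_{\mathcal U}$ by $\iota(g) = [(\tilde i_F(g))_F]_{\mathcal U}$. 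For $g \neq h$ the $\mathcal U$-large set $A_g \cap A_h$ is contained in $\{F : \tilde i_F(g) \neq \tilde i_F(h)\}$ (by injectivity of $i_F$ on $F$), so $\iota$ is injective; on the $\mathcal U$-large set $A_g \cap A_h \cap A_{gh}$ the partial-monomorphism property of $i_F$ gives $\tilde i_F(gh) = \tilde i_F(g) \tilde i_F(h)$, so $\iota$ is a homomorphism.

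\textbf{The remaining arrows.} For $(4) \Rightarrow (5)$, apply coordinatewise the permutation-matrix embedding $S_n \hookrightarrow GL(n,\K)$ already used at the end of the proof of Theorem~\ref{th:lef}; since each coordinate map is an injective homomorphism, the induced map of ultraproducts is again injective and a homomorphism, equality being what $\sim_{\mathcal U}$ is defined from. The implication $(5) \Rightarrow (2)$ is the trivial specialization $\K_i := \K$. For $(2) \Rightarrow (1)$, let $\exists x_1 \cdots \exists x_k\, \phi(x_1,\ldots,x_k)$ be a sentence in $\The(G)$; since $G$ embeds in $M = \bigl(\prod_i GL(n_i,\K_i)\bigr)_{\mathcal U}$ and the sentence is existential, it also holds in $M$. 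Any tuple of witnesses in $M$ lifts to representatives in the direct product, and since $\phi$ is quantifier-free, its truth in $GL(n_i,\K_i)$ on the corresponding coordinates for $\mathcal U$-almost every $i$ follows from the definition of $\sim_{\mathcal U}$ on atomic formulas together with the ultrafilter property applied to Boolean combinations. In particular $\phi$, and hence the original sentence, is realized in at least one $GL(n_i,\K_i)$.

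\textbf{Where the difficulty lies.} The only non-formal step is the construction of $\mathcal U$ in $(3) \Rightarrow (4)$, and the delicacy is purely combinatorial: the ultrafilter must be rich enough that every finite subset of $G$ is eventually captured. A pleasant byproduct of this cycle is a proof of $(1) \Leftrightarrow (2)$ that avoids Malcev's theorem entirely, since the linear-group occurrences in (2) and (5) are reached only through symmetric groups via Cayley's embedding.
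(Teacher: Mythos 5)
Your proof is correct, but it is organized differently from the paper's, and the difference matters for what the theorem is meant to accomplish. The paper proves the single cycle $(1)\Rightarrow(2)\Rightarrow(3)\Rightarrow(4)\Rightarrow(5)\Rightarrow(1)$ (in the displayed numbering), re-deriving $(1)\Leftrightarrow(3)$ from scratch rather than quoting Theorem~\ref{th:lef}. Your ultrafilter on $\mathscr{P}_{fin}(G)$ for $(3)\Rightarrow(4)$ (the sets $A_g$ generate exactly the paper's filter of upper cones), the coordinatewise permutation-matrix step $(4)\Rightarrow(5)$, and the lifting-plus-\L o\'s argument landing back in $(1)$ all coincide with the paper's. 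What you never prove is the paper's implication $(2)\Rightarrow(3)$ --- passing from an ultraproduct of groups $GL(n_i,\K_i)$ over arbitrary fields down to finite groups --- which the paper handles by first replacing each $\K_i$ by an ultraproduct of finite fields (Observation~\ref{obs:embeds}, proved via Steinitz's theorem) and then extracting a coordinate where the lifted map is a local monomorphism into a finite group. By closing your loop with the citation $(1)\Leftrightarrow(3)$ instead, you route that difficulty through Theorem~\ref{th:lef}, whose proof of ``matrix models $\Rightarrow$ LEF'' rests on Malcev's theorem. Consequently your final remark is backwards: your argument does \emph{not} avoid Malcev, it imports it, whereas the paper's cycle is designed precisely to bypass Malcev (as announced at the start of Section~\ref{s:ultraprod}), the only field-theoretic input being Observation~\ref{obs:embeds}. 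If you want the Malcev-free byproduct you advertise, you must supply a direct proof of $(2)\Rightarrow(3)$ along the paper's lines; everything else in your write-up is sound and can stay as is.
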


\begin{proof}
(\ref{th:classical:1})$\Rightarrow $(\ref{th:classical:4}): On $\mathscr{P}_{fin}(G)$, the family of all finite subsets of $G$ ordered by inclusion,
take an ultrafilter containing all upper cones, that is, the sets
\[\{\Phi\in \mathscr{P}_{fin}(G)\colon \Phi\supseteq F\},\]
where $F\in\mathscr{P}_{fin}(G)$. For each $\Phi\in\mathscr{P}_{fin}(G)$ choose a field $\K_\Phi$, a natural number $n_\Phi$, and an injection
$j_\Phi\colon \Phi\to GL(n_\Phi,\K_\Phi)$ preserving partial multiplication (Observation \ref{obs:1}). Define
\[ j\colon G\to \left(\prod_{\Phi\in\mathscr{P}_{fin}(G)} GL(n_\Phi,\K_\Phi)\right)_\mathcal{U} \]
by
\[ j(g)=\left[j_\Phi(g)\right]_\mathcal{U} \]
(when $g\notin \Phi$, $j_\Phi(g)$ denotes an arbitrary element of $GL(n_\Phi,\K)$).
This $j$ is an embedding of groups.
\par
(\ref{th:classical:4}) $\Rightarrow$ (\ref{th:classical:2}). 
By embedding every field $\K_i$ into an ultraproduct of finite fields (Obs. \ref{obs:embeds}), and using observations (\ref{obs(2)}) and (\ref{obs(3)}), we can assume without loss in generality that all $\K_i$ are finite fields. 
Let $F\subseteq G$ be finite. For every $g\in F$, pick a representative $(j(g)_i)\in \prod_iGL(n_i,\K_i)$ of the equivalence class $[j(g)]_{\mathcal U}$. For every index $i$, there is now a well-defined mapping $F\ni g\mapsto j(g)_i\in GL(n_i,\K_i)$.
The set of all indices $i$ for which $j(g)_i$ is a local monomorphism must belong to the ultrafilter and so is non-empty. Choose an index $i$ from this set and notice that the group $GL(n_i,\K_i)$ is finite.
\par
(\ref{th:classical:2}) $\Rightarrow$ (\ref{th:classical:5}): A similar argument to the proof of implication (\ref{th:classical:1})$\Rightarrow $(\ref{th:classical:4}), only take as $j_{\Phi}$ a local monomorphism from $\Phi$ into a suitable finite group of permutations (which exists since $G$ is assumed LEF).
\par
(\ref{th:classical:5}) $\Rightarrow$ (\ref{th:classical:3}): Here use the fact that $S_n$ sits inside of the group $GL(n,\K)$ as a subgroup for every $\K$ and $n$.
\par
(\ref{th:classical:3})$\Rightarrow $(\ref{th:classical:1}): Let $j\colon G<\prod_i\left(GL(n_i,\K)\right)_{\mathcal{U}}$ be an embedding, and let $F\subseteq G $ be finite.
Then $j\restriction F$ is a partial monomorphism, and so
$\{i\colon j_i\restriction F \mbox{ is a partial monomorphism } \}\in\mathcal{U}$ (so in particular is nonempty).
The result now follows by Observation \ref{obs:1}.
\end{proof}

Overall, we can see that theory of groups admitting matrix models ``in the classical sense'' is more or less fully understood. This approach can be seen as a ``toy example'' (to borrow another expression from theoretical physics) of more interesting and mysterious theories of group matrix models, to which we proceed now.

\section{Ultraproducts of metric structures}

The concept of a matrix model adequate for the needs of operator algebraists is less strict than the one ``in classical sense''. We do not aim to ascertain that two elements of a matrix group, $x$ and $y$, are equal. Instead, given an $\e>0$, we are allowed to interpret a formula $x=y$ in a matrix group in such a way that the ``truth value'' of the equality is $> 1-\e$. This is understood in the sense
\[d(x,y)<\e,\]
where $d$ is a distance on a matrix group in question and $x,y$ are elements of the group.
Accordingly, instead of the algebraic ultraproduct of matrix groups, we will form the {\em metric ultraproduct,} factoring out pairs of infinitesimally close elements.

The aim of this section is to formulate an adequate version of an ultraproduct of a family of metric groups, and to give some examples.

To make a good choice of a distance $d$ as above, let us first examine the notion of the Banach space ultraproduct, which is well established.

\subsection{Ultraproducts of normed spaces}

Let $(E_{\alpha})_{\alpha\in A}$ be a family of normed spaces and let ${\mathcal U}$ be an ultrafilter on the index set $A$.
Define the {\em $\ell^{\infty}$-type sum} of the spaces $E_{\alpha}$,
\[{\mathscr E}=\oplus^{\ell^\infty}E_\alpha=\left\{x\in\prod_{\alpha}E_\alpha\colon \sup_{\alpha}\norm{x_\alpha}<\infty\right\}.\] 

This $\mathscr E$ is a normed linear space containing  every $E_\alpha$ as a normed subspace. The norm on $\mathscr E$ is given by:
\[\norm x = \sup_{\alpha\in A}\norm{x_\alpha}_{\alpha}.\]

Consider
\[{\mathscr N_{{\mathcal U}}}=\left\{x\colon \lim_{\alpha\to{\mathcal U}}\norm{x_{\alpha}}=0\right\},\]
where we let $\lim_{\alpha\to{\mathcal U}}y_\alpha=y$ if for every $\e>0$, $\{\alpha\colon\abs{y_\alpha-y}<\e\}\in\mathcal{U}$.
If the $y_{\alpha}$ are uniformly bounded, then $\lim_{\alpha\to{\mathcal U}}y_\alpha$ exists and is unique.

This $\mathscr N_{\mathcal U}$ is a closed linear subspace of $\mathscr{E}$. Now we define the metric ultraproduct of the family $(E_{\alpha})_{\alpha\in A}$ modulo the ultrafilter ${\mathcal U}$ as the normed quotient space
\[\left(\prod E_\alpha\right)_{\mathcal U} ={\mathscr E}/{\mathscr N}_{{\mathcal U}}.\]
It is a linear space equipped with the norm
\[\norm{[x]_{\mathcal U}} =\lim_{\alpha\to{\mathcal U}}\norm{x_{\alpha}}.\]

A version of the diagonal argument shows that when the ultrafilter ${\mathcal U}$ is not countably complete (in particular, is
non-principal), then the ultraproduct $E=\left(\prod E_\alpha\right)_{\mathcal U}$ is a Banach space. To see this, let $(x_k)$ be a Cauchy sequence of elements of $E$. For each $i\in\N$, fix $N(i)$ so that 
\[\forall N^\prime,N\geq N(i),~~\norm{x_{N^\prime}-x_N}<2^{-i}.\]
For every $k$, select a representative $(x_k^\alpha)_{\alpha\in A}\in \mathscr{E}$ of the equivalence class $x_k$. Given an $i\in\N_+$, define
\[I_i=\{\alpha\in A\colon \norm{x_{N(i)}^\alpha-x_{N(i+1)}^\alpha}<2^{-i}\}.\]
Every $I_i\in {\mathcal U}$, and
without loss in generality, we may assume that $I_1\supseteq I_2\supseteq\dots$ and $\cap_{i=1}^\infty I_i=\emptyset$ (countable incompleteness of $\mathcal U$). Now define an element $x\in \mathscr{E}$ by
\[x\vert_{I_i\setminus I_{i+1}} = x_{N(i)}\vert_{I_i\setminus I_{i+1}}.\]
Then the equivalence class $[x]_{\mathcal U}$ is the limit of our Cauchy sequence $(x_k)$.
 
If for some natural number $n$ the set of indices $\alpha$ with $\dim E_{\alpha}= n$ is in ${\mathcal U}$, then the ultraproduct is a normed linear space of dimension $n$. If it is not the case for any $n$, then yet another variation of Cantor's argument establishes that the ultraproduct is a non-separable Banach space.

\subsection{Ultraproducts of metric groups}

We would like to have a similar construction for metric groups as we had for normed spaces. 
First we show that if we just equip the groups with left-invariant metrics (and every metrizable group admits a compatible left-invariant metric by the result of Kakutani below), some problems arise. Hence, we will have to assume that metrics are bi-invariant. Not every metrizable group has a compatible bi-invariant metric.
 
\begin{theorem}[Kakutani]
Every metrizable topological group admits a compatible left-invariant metric, i.e. a metric $d$ such that for every $g\in G$
\[d(gx,gy)=d(x,y).\]
\qed
\end{theorem}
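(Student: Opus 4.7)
The plan is the classical Birkhoff--Kakutani construction of a left-invariant metric from a countable base at the identity, followed by a verification that the two natural invariance and compatibility properties hold.

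First I would use metrizability to fix a decreasing countable base $(V_n)_{n\ge 1}$ of neighborhoods of the identity $e$. Using continuity of the group operations $(x,y)\mapsto xy$ and $x\mapsto x^{-1}$ at $e$, I would inductively choose a sequence $(U_n)_{n\ge 1}$ of open symmetric neighborhoods of $e$ such that $U_1\subseteq V_1$ and
\[U_{n+1}\cdot U_{n+1}\cdot U_{n+1}\subseteq U_n\cap V_n\quad\text{for every }n.\]
This refined base is the real engine of the proof: the three-fold product condition is exactly what will later make a triangle-inequality-like argument go through.

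Next I would define a gauge $f\colon G\to[0,1]$ by setting $f(e)=0$, $f(g)=2^{-n}$ if $g\in U_n\setminus U_{n+1}$, and $f(g)=1$ if $g\notin U_1$; and then define
\[\rho(g)=\inf\Bigl\{\,f(g_1)+f(g_2)+\cdots+f(g_k)\;:\;k\in\N,\ g_1g_2\cdots g_k=g\Bigr\}.\]
Finally set $d(x,y)=\rho(x^{-1}y)$. By construction $\rho$ is symmetric, subadditive on $G$, and vanishes at $e$, so $d$ is a pseudometric, and the identity $(gx)^{-1}(gy)=x^{-1}y$ yields left-invariance $d(gx,gy)=d(x,y)$ for all $g,x,y\in G$ at no extra cost. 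Hausdorffness of $G$ gives $\bigcap_n U_n=\{e\}$, hence $\rho(g)=0\Rightarrow g=e$, promoting the pseudometric to a metric.

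The main technical step, and the only real obstacle, is to show that $d$ generates the original topology; this reduces to the estimate
\[\tfrac12 f(g)\;\le\;\rho(g)\;\le\;f(g)\qquad(g\in G).\]
The upper bound is trivial (take $k=1$). For the lower bound I would prove by induction on $k$ that any factorization $g=g_1\cdots g_k$ satisfies $f(g)\le 2(f(g_1)+\cdots+f(g_k))$. The inductive step splits the sum at the least index $j$ with $\sum_{i\le j}f(g_i)$ exceeding half of the total; then each of $g_1\cdots g_{j-1}$, $g_j$, and $g_{j+1}\cdots g_k$ lies in some $U_n$ with $2^{-n}$ dominated by the partial sums, so the triple-product containment $U_{n+1}^3\subseteq U_n$ puts $g$ in $U_{n-1}$ and yields the desired factor of $2$. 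Once this inequality is in hand, the balls $\{g:d(e,g)<2^{-n}\}$ are sandwiched between $U_{n+1}$ and $U_{n-1}$, so $d$ is a compatible left-invariant metric on $G$, which is what we wanted.
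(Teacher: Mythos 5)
Your argument is correct: it is the standard Birkhoff--Kakutani construction (symmetric neighborhoods with $U_{n+1}^3\subseteq U_n\cap V_n$, the gauge $f$, the infimum over factorizations, and the key inequality $f(g)\le 2\sum_i f(g_i)$ proved by splitting a factorization at the half-sum), and all the essential steps are in place. The paper itself states Kakutani's theorem as a known classical fact and gives no proof, so there is nothing to compare against; your sketch supplies exactly the argument one would expect to fill that gap.
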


Let $(G_\alpha,d_\alpha)_{\alpha\in A}$ be a family of topological groups equipped with compatible left-invariant metrics, 
and let ${\mathcal U}$ be an ultrafilter 
on $A$. 
We can form an ultraproduct of the family $(G_\alpha,d_\alpha)$ following the same steps as for normed spaces, but the resulting object will not, in general, be a metric group, only a homogeneous metric space, as the following example shows.

\begin{example}
Let $S_\infty$ denote the infinite symmetric group consisting of all self-bijections of a countably infinite set $\omega$. The {\em standard Polish topology} on $S_\infty$ is the topology of pointwise convergence on the discrete topological space $\omega$. In other words, it is 
induced from the product topology on $\omega^{\omega}$.
As shown by Kechris and Rosendal \cite{KR}, the standard Polish topology is the only non-trivial separable group topology on $S_\infty$.
This topology admits the following compatible left-invariant metric:
\[d(\sigma,\tau) =\sum_{i=1}^\infty \{2^{-i}\colon \sigma(i)\neq\tau(i)\}.\]

Let us try to form an ultrapower of the metric group $(S_{\infty},d)$ with regard to a nonprincipal ultrafilter ${\mathcal U}$ on the natural numbers. Every sequence $x\in (S_\infty)^{\N}$ is ``bounded'' in the sense that $\sup_{n}d(e_n,x_n)<\infty$, and so the analogue of the space $\mathscr E$ is the full Cartesian product group ${\mathscr G}= (S_\infty)^{\N}$ itself. Define
\[{\mathscr N}=\left\{x\colon \lim_{\alpha\to{\mathcal U}}d(x_{\alpha},e)=0\right\}.\]
The estimate 
\begin{eqnarray*}
d(xy,e)&=&d(y,x^{-1})\\ &\leq& d(y,e)+d(x^{-1},e) \\
&=&d(y,e)+d(e,x)
\end{eqnarray*}
shows that $\mathscr N$ is a subgroup of $\mathscr G$. (Notice the use of left-invariance of $d$.)
However, it is not a \emph{normal} subgroup. To see this,
consider two sequences of transpositions of $\omega$, $x=(x_i)=((i,i+1))_{i\in\omega}$ and $y=(y_i)=((1,i))$. Then it is easily seen that $x\in {\mathscr N}$, and yet $y^{-1}xy\notin {\mathscr N}$. 
Thus, although the homogeneous factor-space ${\mathscr G}/{\mathscr N}$ admits a $\mathscr G$-invariant metric
\[d(x,y)=\lim_{n\to{\mathcal U}}d_n(x_n,y_n),\]
it is not a group. 
\end{example}

If we want to get a {\em metric group} as a result of an ultraproduct construction, we must use {\em bi-invariant} metrics:
\[d(gx,gy)=d(x,y)=d(xg,yg).\]

If $(G_\alpha,d_{\alpha})$, $\alpha\in A$, is a family of groups equipped with bi-invariant metrics and ${\mathcal U}$ is an ultrafilter on the index set $A$, then the subgroup 
\[{\mathscr N}=\left\{x\colon \lim_{\alpha\to{\mathcal U}}d(x_{\alpha},e)=0\right\}\]
is easily seen to be a normal subgroup of
\begin{equation}
{\mathscr G}=\oplus^{\ell^\infty}G_\alpha=\left\{x\in\prod_{\alpha}G_\alpha\colon \sup_{\alpha}d(x_\alpha,e)<\infty\right\},
\label{eq:ellinftytypesum}
\end{equation}
and the quotient group 
\[\left(\prod_{\alpha\in A}G_{\alpha}\right)_{\mathcal U}={\mathscr G}/{\mathscr N}\]
is well-defined. 
It is a metric group equipped with the bi-invariant metric 
\[d(x{\mathscr N},y{\mathscr N})=\lim_{\alpha\to{\mathcal U}}d_{\alpha}(x_{\alpha},y_{\alpha})\]
and the corresponding group topology. It will be referred to as the {\em metric ultraproduct} of the family $(G_\alpha,d_{\alpha})_{\alpha\in A}$ modulo ${\mathcal U}$. 

Just as in the case of normed spaces, the ultraproduct of a family of groups with bi-invariant metrics is a complete topological group, which is either non-separable or locally compact (assuming ${\mathcal U}$ to be non countably complete).
Moreover, in all the examples we will be considering below, the domain of the ultraproduct coincides with the full cartesian product, because all the metrics are uniformly bounded from above. (In fact, one can always replace a bi-invariant metric $d$ on a group with the bounded bi-invariant metric  $\min\{d,1\}$). 

Here are a few of the most important examples of groups equipped with natural bi-invariant metrics.

\begin{example}
The symmetric group $S_n$ of finite rank $n$ equipped with the {\em normalized Hamming distance}:
\[d_{hamm}(\sigma,\tau)=\frac 1n\sharp\left\{i\colon\sigma(i)\neq\tau(i)\right\}.\]
\end{example}
 
\begin{example}
The unitary group of rank $n$,
\[U(n)=\{u\in M_n(\C) \colon u^\ast u=uu^\ast ={\mathrm{Id}}\},\]
equipped with the {\em normalized Hilbert-Schmidt metric}:
\[d_{HS}(u,v)=\norm{u-v}_2=\sqrt{\frac 1n\sum_{i,j=1}^n \abs{u_{ij}-v_{ij}}^2}.\]
This  is the standard $\ell^2$ distance between matrices viewed as elements of an $n^2$-dimensional 
Hermitian space $\C^{n^2}$, which is normalized so as to make the identity matrix have norm one. The metric is easily checked to be bi-invariant, by rewriting the definition of the distance,
\begin{eqnarray}
d_{HS}(u,v)&=& \frac 1{\sqrt n}\sqrt{{\mathrm{tr}\,} ((u-v)^\ast(u-v))}
\label{eq:hsdisttr}\\
&=& \sqrt{2-\widetilde{\tr}_n(u^\ast v)-\widetilde{\tr}_n(v^\ast u)},
\nonumber
\end{eqnarray}
where $\widetilde{\tr}_n=n^{-1/2}\tr$ is the normalized trace on $U(n)$,
and using the characteristic property of trace: \[{\mathrm{tr}}\,(AB)={\mathrm{tr}}\,(BA).\]
We will use the notation $U(n)_2$ for the group $U(n)$ equipped with the normalized Hilbert-Schmidt distance.
\end{example}

\begin{example}
The group $U(n)$ equipped with the {\em uniform operator metric:} 
\[d_{unif}(u,v)=\norm{u-v}=\sup_{\norm x\leq 1}\norm{(u-v)(x)}.\]
\end{example}

Larger matrix groups, such as $GL(n,\K)$ and their closed non-compact subgroups, typically do not possess any compatible bi-invariant metrics whatsoever. For instance, the following is a well-known observation. 

\begin{example}
The group of invertible matrices $GL(n,\R)$, as well as the special linear group $SL(n,\R)$, do not admit bi-invariant metrics compatible with their standard locally euclidean topology (induced from $M_n(\R)\cong\R^{n^2}$).
(Hint of a proof: if such a metric existed, then the group in question would possess {\em small invariant neighbourhoods,} that is, conjugation-invariant open sets would form a basis at identity. But this is not the case. The details can be found in \cite{HR}.)
\end{example}

\section{\label{s:hyp}
Groups admitting matrix models (hyperlinear groups)}

In this Section, we define the central concept of a hyperlinear group, and outline a version of model theory for metric structures which provides a rigorous framework for treating hyperlinear groups as groups admitting matrix models which are unitary groups with the Hilbert--Schmidt distance. 

\begin{definition}
A countable discrete group $G$ is {\em hyperlinear} (or: {\em admits matrix models}) if it is isomorphic to a subgroup of a metric ultraproduct of a suitable family of unitary groups of finite rank, with their normalized Hilbert-Schmidt distances. 

More exactly, $G$ is hyperlinear if there are a set $A$, an ultrafilter ${\mathcal U}$ on $A$, a mapping $\alpha\mapsto n({\alpha})$ and an imbedding 
\begin{equation*}
G<\left(\prod_{\alpha} (U(n(\alpha)),d_{HS})\right)_{\mathcal U}.
\end{equation*}
\end{definition}

The model theory of metric structures as developed by Ben-Yaacov, Berenstein, Ward Henson and Usvyatsov \cite{BYBHU} allows to see the above definition as a genuine statement about a possibility to interpret every sentence of the theory of $G$ in some matrix group $U(n)_2$. We will not attempt to develop this viewpoint systematically, limiting ourselves to a few indicative remarks.

The space of truth values in this version of continuous logic is the unit interval $\I=[0,1]$. The truth value is interpreted as a measure of closeness, and in particular the truth value of the formula $x=y$ is $d(x,y)$. 

The two quantifiers are $\inf$ (continuous analogue of $\exists$) and $\sup$ (analogue of $\forall$). Predicates are (bounded, uniformly continuous) functions $M^n\to [0,1]$, e.g.\, the counterpart of the equality relation $=$ is the distance function $d\colon M^2\to [0,1]$. 

Similarly, functions $M^n\to M$ are subject to the uniform continuity restriction. Connectives are all continuous functions $[0,1]^n\to [0,1]$. 
If $d$ is a trivial ($\{0,1\}$-valued) metric, 
one recovers the usual predicate logic with truth values $0$ and $1$, which have swapped their places.

Every sentence in the classical theory can be thus interpreted as a sentence formed in the continuous logic, but not vice-versa.

Formulas are defined inductively, just like in the classical logic. All variables and constants are terms, and whenever $f$ is a function symbol and $t_1,\ldots,t_n$ are terms, then $f(t_1,t_2,\ldots,t_n)$ is a term. An atomic formula is an expression of the form either $P(t_1,\ldots,t_n)$ or $d(t_1,t_2)$, where $P$ is an $n$-ary predicate symbol and $t_i$ are terms. Formulas are build from atomic formulas, using two rules: if $u\colon [0,1]^n\to [0,1]$ is a continuous function (that is, a connective) and $\varphi_1,\ldots,\varphi_n$ are formulas, then $u(\varphi_1,\ldots,\varphi_n)$ is a formula; if $\varphi$ is a formula and $x$ is a variable, then $\sup_x\varphi$ and $\inf_x\varphi$ are formulas. 

A {\em metric structure}, $\mathcal M$, is a complete bounded metric space equipped with a family of predicates and functions. For instance, if $(G,d)$ is
a complete bounded metric group and $d$ is bi-invariant, then $G$ can be treated as a metric structure equipped with the predicate $S(g,h,k)=d(gh,k)$, the inversion function $i\colon G\to G$, and the identity, given by the function $e\colon \{\ast\}\to G$ (a homomorphism from the trivial group to $G$). Notice that $S$ and $i$ are uniformly continuous due to the bi-invariance of the metric $d$.

The {\em value} of a sentence $\sigma$ in a metric structure $\mathcal M$ is a number $\sigma^{\mathcal M}\in[0,1]$ defined by induction on (variable-free) formulas, beginning with the convention that $d(t_1,t_2)^{\mathcal M}$ is just the value of the distance between $t_1$ and $t_2$. The value of $P(t_1,t_2,\ldots,t_n)$ and $u(\sigma_1,\sigma_2,\ldots,\sigma_n)$, where $P$ is an $n$-ary predicate symbol, $t_i$ are terms, $u$ is a continuous function $[0,1]^n\to [0,1]$, and $\sigma_j$ are sentences, is defined in a natural way. Finally,
\[\left(\sup_x\varphi(x)\right)^{\mathcal M}=\sup_x \varphi(x)^{\mathcal M},\]
and similarly
\[\left(\inf_x\varphi(x)\right)^{\mathcal M}=\inf_x\varphi(x)^{\mathcal M}.\]

A sentence of the form $\inf_x\varphi(x)$, where $x=(x_1,x_2,\ldots,x_n)$, is called an {\em $\inf$-sentence,} and serves as an analogue of an existential sentence in the classical binary logic. 

Notice that the normalized Hilbert-Schmidt distance $d_{HS}$ takes values in the interval $[0,2]$, so if we want the values of sentences to belong to $[0,1]$, we may wish to use the distance $d=\min\{d_{HS},1\}$ instead. 

Every formula of the first-order theory of groups admits a  ``translation'' into a formula of the continuous logic theory of groups equipped with a bi-invariant metric. Namely, the symbols $S$, $i$ and $e$ are replaced with the corresponding predicate and function symbols described above, the logical connectives $\wedge$ and $\vee$ become, respectively, continuous functions $\max$ and $\min$ from $[0,1]^2$ to $[0,1]$, while $\neg$ is replaced with the function $t\mapsto 1-t$, and the quantifiers $\exists_x$ and $\forall_x$ are turned into $\inf_x$ and $\sup_x$, accordingly. Under this translation, sentences go to sentences, existential sentences go to $\inf$-sentences, and so on. Intuitively, under this ``translation,'' exact statements become approximate.
It is in this sense that we treat sentences of $\Th(G)$ as sentences of the continuous logic theory of unitary groups $U(n)_2$ in the statement of the next result. 

In connection with iten (\ref{item:metric1a}) below, remember that $U(n)_2$ embeds isometrically into the (renormalized) Euclidean space $\ell^2(n^2)$, and so the ultraproduct of unitary groups isometrically embeds into the corresponding Hilbert space ultraproduct. In this sense, one can talk about orthogonality.

\begin{theorem}
For a group $G$, the following are equivalent.
\begin{enumerate}
\item $G$ is hyperlinear, 
\label{item:metric1}
\smallskip
\item $G$ embeds into a metric ultraproduct $\left(\prod_iU(n_i)_2\right)_{\mathcal U}$ of a family of unitary groups as an orthonormal system of vectors,
\label{item:metric1a}
\smallskip
\item For every finite $F\subseteq G$ and every $\e>0$, there are $n\in\N$ and an {\em $(F,\e)$-almost homomorphism} $j\colon F\to U(n)_2$, that is, a map with the property
\smallskip
\begin{enumerate}
\item[(a)] if $g,h\in F$ and $gh\in F$, then $d(j(g)j(h),j(gh))<\e$,
\end{enumerate}
\smallskip
which is in addition {\em uniformly injective} on $F$ in the sense that:
\smallskip
\begin{enumerate}
\item[(b)] if $g,h\in F$ and $g\neq h$, then $d(g,h)>\sqrt 2-\e$.
\end{enumerate}
\label{item:metric3}
\smallskip
\item $G$ admits matrix models in the sense of continuous logic: for every existential first-order sentence $\sigma\in \The(G)$ and each $\e>0$ there is $n$ such that
\[\sigma^{U(n)_2}<\e.\]
\label{item:metric2}
\item The same conditions (a) and (b) as in item (\ref{item:metric3}), but with $\sqrt 2-\e$ in (b) replaced by a fixed positive value, e.g.\, $10^{-10}$. 
\label{item:10-10}
\end{enumerate}
\label{th:hypcriteria}
\end{theorem}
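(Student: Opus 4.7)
I plan to prove the five conditions are equivalent by establishing the cycle $(\ref{item:metric1a})\Rightarrow(\ref{item:metric3})\Rightarrow(\ref{item:10-10})\Rightarrow(\ref{item:metric1})\Rightarrow(\ref{item:metric1a})$, and handling $(\ref{item:metric1})\iff(\ref{item:metric2})$ separately via a \L o\'s-type theorem for continuous logic. The easy links in the cycle are essentially immediate: orthonormal unitaries sit at normalized Hilbert--Schmidt distance exactly $\sqrt 2$ (since $\|u-v\|^2=\|u\|^2+\|v\|^2-2\,\mathrm{Re}\langle u,v\rangle=2$), so in a metric ultraproduct a $\mathcal U$-large set of representatives furnishes $(F,\e)$-almost-homomorphisms with separation $>\sqrt 2-\e$, giving $(\ref{item:metric1a})\Rightarrow(\ref{item:metric3})$; the implication $(\ref{item:metric3})\Rightarrow(\ref{item:10-10})$ is trivial since $\sqrt 2-\e>10^{-10}$ for small $\e$; and $(\ref{item:10-10})\Rightarrow(\ref{item:metric1})$ is by the standard ultrafilter construction on $\mathscr P_{fin}(G)\times\N$ ordered by refinement --- pick an ultrafilter containing all upper cones, and the diagonal map $g\mapsto[j_{F,n}(g)]_{\mathcal U}$ built from the hypothesised almost-homomorphisms becomes a genuine group homomorphism in the ultraproduct (the multiplicative defect vanishes in the limit), with the $10^{-10}$-separation guaranteeing injectivity, in exact parallel to the proof of $(\ref{th:classical:1})\Rightarrow(\ref{th:classical:4})$ in Theorem~\ref{th:classical}.

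The technical core is $(\ref{item:metric1})\Rightarrow(\ref{item:metric1a})$. From a bare embedding $j\colon G\hookrightarrow\mathcal M=(\prod_iU(n_i)_2)_{\mathcal U}$, the character $\tau(g)=\lim_{\mathcal U}\widetilde{\tr}(j_i(g))$ is a positive-definite normalised function on $G$ with $|\tau(g)|<1$ for $g\ne e$, whereas orthonormality demands $\tau=\delta_e$ (the canonical regular character). The plan, in the spirit of R\u adulescu, is a tensor-amplification: for each finite $F\subseteq G$ and $\e>0$, take almost-homomorphisms $j^{(1)},\ldots,j^{(N)}\colon F\to U(m_k)_2$ with multiplicative tolerances summing to $\e$, and form $j'(g)=j^{(1)}(g)\otimes\cdots\otimes j^{(N)}(g)$. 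By $\widetilde{\tr}(A\otimes B)=\widetilde{\tr}(A)\widetilde{\tr}(B)$ and the telescoping bound $\|A\otimes B-A'\otimes B'\|_{HS}\le\|A-A'\|_{HS}+\|B-B'\|_{HS}$ (valid because unitaries have normalized HS norm $1$), the map $j'$ is an $\e$-almost-homomorphism whose character is $\prod_k\widetilde{\tr}(j^{(k)}(g))$; provided each factor has modulus uniformly bounded below $1$ on $F\setminus\{e\}$, the product can be driven below $\e$ for large $N$, and the ultrafilter construction of the first paragraph then reassembles these into an orthonormal embedding. The delicate point is ensuring the uniform bound --- this requires a preliminary averaging/perturbation step to exclude the scalar-unitary degeneracy $j^{(k)}(g)\in\C\cdot I$, which would leave $|\widetilde{\tr}(j^{(k)}(g))|=1$ and prevent the tensor character from decaying.

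The equivalence $(\ref{item:metric1})\iff(\ref{item:metric2})$ is the continuous-logic \L o\'s theorem: the value of any $\inf$-sentence in a metric ultraproduct equals the $\mathcal U$-limit of its values in the components. Thus if $G\subset\mathcal M$ and $\sigma\in\The(G)$ translates under the prescribed dictionary ($\wedge\leftrightarrow\max$, $\vee\leftrightarrow\min$, $\neg\leftrightarrow 1-t$, $\exists\leftrightarrow\inf$) to the $\inf$-sentence $\tilde\sigma$, then existential witnesses in $G$ remain witnesses in $\mathcal M$, giving $\tilde\sigma^{\mathcal M}=0$ and hence $\tilde\sigma^{U(n_i)_2}<\e$ on a $\mathcal U$-large set. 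The converse collects approximate witnesses provided by the hypothesis and assembles them into an embedding via an ultrafilter on $\mathscr P_{fin}(\The(G))\times\N$ refining the cofinal filter, in the same manner as the first paragraph. The principal obstacle throughout is thus the amplification of the second paragraph: simultaneously controlling the multiplicative error of the product model and forcing the character to vanish --- ruling out the scalar degeneracies where tensor powers fail to suppress the trace --- is the step demanding care beyond routine ultrafilter combinatorics.
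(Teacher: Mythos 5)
Your skeleton (the cycle $(2)\Rightarrow(3)\Rightarrow(5)\Rightarrow(1)\Rightarrow(2)$ plus a \L o\'s-type treatment of $(4)$) is sound, and the easy links match the paper's argument. The one genuine gap sits exactly where you flag it, in $(1)\Rightarrow(2)$, and flagging it is not the same as closing it: it is the only nontrivial point of the whole theorem. Plain tensor powers $u^{\otimes N}$ have normalized trace $\widetilde{\tr}_n(u)^N$, and the degenerate case is precisely when $w=i(g)^{\ast}i(h)$ has $\vert\widetilde{\tr}(w)\vert=1$ in the ultraproduct, i.e.\ $w$ is a scalar $\lambda{\mathbb I}$ with $\lambda\neq 1$. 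Then $\vert\widetilde{\tr}(w)^N\vert=1$ for all $N$, $\mathrm{Re}(\lambda^N)$ oscillates, and the amplified distance $\sqrt{2-2\,\mathrm{Re}(\lambda^N)}$ not only fails to converge to $\sqrt 2$ but can collapse arbitrarily close to $0$ along a subsequence of $N$'s --- so without the ``preliminary averaging/perturbation step'' the construction can destroy injectivity rather than improve separation. The paper's resolution is concrete and cheap, and you should supply it (or something equivalent): first pad every block, $u\mapsto\mathrm{diag}(u,{\mathbb I}_{n_\alpha})\in U(2n_\alpha)$, which replaces the limiting trace $t=\widetilde{\tr}(i(g)^{\ast}i(h))$ by $(t+1)/2$, of modulus strictly less than $1$ whenever $g\neq h$. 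After that your tensor amplification does work ($\vert t\vert^N\to 0$, distance $\to\sqrt 2$), and the reindexing over $A\times\N_+$ with an ultrafilter projecting to $\mathcal U$ and containing all fiberwise-cofinite sets finishes the embedding. (The paper amplifies instead by iterating the conjugation representation $\bar u\otimes u\colon U(n)\to U(n^2)$, whose trace $\vert\widetilde{\tr}_n(u)\vert^2$ has the added convenience of being real and nonnegative, so the distance recursion $d\mapsto d\sqrt{2-d^2/2}$ is monotone toward $\sqrt 2$; but it needs the same padding to avoid the scalar case $\bar\lambda\otimes\lambda={\mathbb I}$, so your variant is not easier or harder, just unfinished.)

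A minor routing point: the \L o\'s argument for $(1)\Rightarrow(4)$ does not work from a bare embedding, because the translated atom $\neg(x=y)$ evaluates to $1-\min\{d_{HS},1\}$ and is small only when the images of distinct witnesses are at distance $\geq 1-\e$. So this implication must pass through $(2)$ or $(3)$ (as the paper's $(3)\Rightarrow(4)$ does). Since you prove $(1)\iff(2)$ anyway this costs nothing, but as stated the dependence is on the wrong node of your diagram.
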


\begin{proof}
(\ref{item:metric1})$\Rightarrow$(\ref{item:metric1a}): This is the key to the entire result, whence the rest follows easily. Given a monomorphism 
\begin{equation}
i\colon G\hookrightarrow \left(\prod_\alpha U(n_\alpha)_2\right)_{\mathcal U},
\label{eq:i}
\end{equation}
and any two distinct elements $g,h\in G$, we can of course guarantee that the images $i(g)$ and $i(h)$ are at a strictly positive distance from each other, but no more than that: something like $d(i(g),i(h))=10^{-10}$ is definitely a possibility, and in fact the image $i(G)$ in the induced topology may even happen to be a non-discrete group.
We will now construct a re-embedding, $j$, of $G$ into another metric ultraproduct of unitary groups, where the distance between $j(g)$ and $j(h)$ will be always equal to $\sqrt 2$.

The Hermitian space $M_n(\C)$, which we identify with $\C^{n^2}$, admits a natural action of the unitary group $U(n)$ by conjugations:
\[u\cdot M = u^{\ast}Mu,~~u\in U(n),~~M\in M_n(\C).\]
This action is by linear operators and preserves the Hermitian inner product, for instance, since Formula (\ref{eq:hsdisttr}), without the scalar factor in front, gives the Hilbert distance on the space $M_n(\C)$. Thus, we obtain a unitary representation $U(n)\to U(n^2)$. Denote it $i_n^{(2)}$ (in fact, the more precise symbol would be $\bar i_n\otimes i_n$).

We want to compute the distance induced on $U(n)$ by the embedding $i_n^{(2)}\colon U(n)\hookrightarrow U(n^2)_2$ as above. For this purpose, again according to Equation (\ref{eq:hsdisttr}), it suffices to know the restriction of the trace $\tr_{n^2}$ to $U(n)$, that is, the composition $\tr_{n^2}\circ i_n^{(2)}$. The matrices $E_{ij}$ whose $(i,j)$-th position is one and the rest are zeros form an orthonormal basis of $M_n(\C)$, and so for every linear operator $T$ on $M_n(\C)$,
\[\tr(T)=\sum_{ij} \langle T(E_{ij}),E_{ij}\rangle = \sum_{ij}\left(T(E_{ij})\right)_{ij}.\]
Since $(u\cdot E_{ij})_{ij}=(u^\ast E_{ij}u)_{ij}=\overline{u_{ji}}u_{ji}$, we conclude: for every $u\in U(n)$,
\[\tr_{n^2}(i_n^{(2)}(u)) = \overline{\tr_n(u)}\tr_n(u)=\abs{\tr_n(u)}^2.\]
The same clearly holds with regard to the normalized traces on both unitary groups. Since at the same time the trace is a linear functional, we deduce from Equation (\ref{eq:hsdisttr}):
\begin{eqnarray*}
d_{HS,n^2}\left(i_n^{(2)}(u),i_n^{(2)}(v)\right)&=&
\frac{1}{\sqrt{n^2}}\sqrt{\tr_{n^2}((i_n^{(2)}(u)-i_n^{(2)}(v))^{\ast}
(i_n^{(2)}(u)-i_n^{(2)}(v)))}\\
&=&
\frac{1}{n}\sqrt{\tr_{n^2}(2{\mathbb I}-\tr_{n^2}(i_n^{(2)}(u^\ast v))- \tr_{n^2}(i_n^{(2)}(v^\ast u))) }\\
&=& \sqrt{2-2\left\vert\widetilde{\tr}_n(u^\ast v)\right\vert^2},
\end{eqnarray*}
where $\widetilde{\tr}_n$ denotes the normalized trace on $U(n)$.

Compare this to:
\[d_{HS,n}(u,v)= \sqrt{2-\widetilde{\tr}_n(u^\ast v)-\widetilde{\tr}_n(v^\ast u)}.\]
Since $\widetilde{\tr}_n(u^\ast v)+\widetilde{\tr}_n(v^\ast u) = 2\left\vert \widetilde{\tr}_n(u^\ast v)\right\vert$, the last two equations imply:
\begin{equation}
d_{HS,n^2}\left(i_n^{(2)}(u),i_n^{(2)}(v)\right)
= d_{HS,n}(u,v)\sqrt{2-\frac{d_{HS,n}(u,v)^2}2}.
\end{equation}

If we now define recurrently group embeddings $i_n^{(2^k)}\colon U(n)\hookrightarrow U(n^{2^k})$, $k=2,3,\ldots$, it follows that for any two elements $u,v\in U(n)$ satisfying $0<d_{HS,n}(u,v)< 2$ the iterated distances inside of the groups $U(n^{2^k})_2$ converge to $\sqrt 2$ in the limit $k\to\infty$.  

Now let us get back to the initial group embedding from Equation (\ref{eq:i}). First, we want to assure that the pairwise distances within the image $i(G)$ are strictly less than $2$. This is achieved by throwing inside the ultraproduct a pile of rubbish, as follows: embed every $U(n_\alpha)$ into a unitary group of twice the rank using block-diagonal matrices:
\[U(n_\alpha)\ni u \mapsto\left(\begin{array}{c|c}
u & 0 \\ \hline 
0 & {\mathbb I}_n
\end{array}\right)\in U(2n_\alpha).\]
The resulting composition mapping 
\[i^{\prime}\colon G\to  \left(\prod_\alpha U(n_\alpha)_2\right)_{\mathcal U}\to \left(\prod_\alpha U(2n_\alpha)_2\right)_{\mathcal U}\]
is still a group monomorphism, but all the distances between elements of $G$ are now cut by half and so the diameter of $i^\prime(G)$ is $\leq 1$. So we can assume without loss in generality that the original embedding $i$ has this property.

On the new index set $B=A\times\N_+$ choose an ultrafilter $\mathcal V$ satisfying two properties:
\begin{enumerate}
\item 
the projection of $\mathcal V$ along the first coordinate is the initial ultrafilter $\mathcal U$, and
\item if the intersection of a subset $X\subseteq A\times\N_+$ with every fiber $\{a\}\times\N_+$ is cofinite, then $X\in {\mathcal V}$.
\end{enumerate}
Lift the monomorphism $i$ in an arbitrary way to a map $\bar i\colon G\to \prod_{\alpha\in A}U(n_{\alpha})$ and define a map $\bar j\colon G\to \prod_{(\alpha,k)\in B}U\left(n_{\alpha}^{2^k}\right)$ by letting
\[\bar j_{\alpha,k}(g) = i_{n_\alpha}^{(2^k)}(i_{\alpha}(g)).\]
This $\bar j$ determines a map $j\colon G\to\left(\prod_{(\alpha,k)\in B}U\left(n_{\alpha}^{2^k}\right)\right)_{\mathcal V}$, and it is not hard to see that $j$ is a group monomorphism with the property that the images of every two distinct elements of $G$ are at a distance exactly $\sqrt 2$ from each other.
\smallskip

(\ref{item:metric1a})$\Rightarrow$(\ref{item:metric3}): Given a group embedding $i$ as in Equation (\ref{eq:i}) with the property that $i(g)$ and $i(h)$ are orthogonal whenever $g\neq h$, let $F\subseteq G$ be finite and let $\e>0$. Let $\bar i$ denote any lifting of $i$ to a map from $G$ to the direct product of $U(n_\alpha)$. Denote $C$ the set of indices $\alpha$ for which $\bar i_\alpha$ is an $(F,\e)$-almost monomorphism which in addition satisfies 
\[\sqrt 2-\e<d(\bar i_\alpha(g),\bar i_{\alpha}(h))<\sqrt 2 +\e\]
for all $g,h\in F$, $g\neq h$. Then $C\in {\mathcal U}$ and in particular $C$ is non-empty.
\smallskip

(\ref{item:metric3})$\Rightarrow$(\ref{item:metric2}): again, as in the proof of sufficiency in Observation \ref{obs:1}, it is enough to consider the case of a conjunction of atomic formulas of the form $\neg(x=y)$ or $S(x,y,z)$. When dealing with negation, remember that we replace the metric $d_{HS}$ with $\min\{1,d_{HS}\}$, and so the condition $d_{HS}(x,y)>\sqrt 2-\e$ implies
$\neg(x=y)^{U(n)_2}<\e$. 

(\ref{item:metric2})$\Rightarrow$(\ref{item:10-10}): quite obvious.

(\ref{item:10-10})$\Rightarrow$(\ref{item:metric1}): the argument is just a slight variation of the proof of the implication
``(\ref{th:classical:1})$\Rightarrow $(\ref{th:classical:4})'' in Theorem \ref{th:classical}, so we leave the details to the reader. They can be found in the proof of Th. 3.5 in \cite{pestov}, see also Corollary 5.10 in \cite{BYBHU}.
\end{proof}

The meat of the above theorem (the equivalence of conditions (1,2,3,5)) is variously attributed either to Radulescu \cite{radulescu} or to an earlier work of Kirchberg.

A {\em closed $L$-condition} is an expression of the form $\varphi^{\mathcal M}=0$, where $\varphi$ is a sentence of the language of continuous logic. Notice that this means ${\mathcal M} \vDash\varphi$.
A {\em theory} is a set of closed $L$-conditions. It may be slightly unsettling to observe that the characterization of hyperlinear groups in Theorem \ref{th:hypcriteria}, item (\ref{item:metric2}), is not, strictly speaking, stated in terms of the {\em theory of unitary groups}. However, this is most naturally fixed, as follows. In the statement of the following result, the {\em ultrapower} of $G$, as usual, means the ultraproduct of a family of metric groups metrically isomorphic to $G$.

\begin{corollary}
Let $U=(U,d)$ be a group equipped with a bi-invariant metric and satisfying two conditions:
\begin{enumerate}
\item[(a)] For every $n$, the group $U(n)_2$ embeds into $U$ as a metric subgroup, and
\item[(b)] $U$ embeds into an ultraproduct of groups $U(n)_2$ as a metric subgroup.
\end{enumerate}
Then the following are equivalent for an arbitrary group $G$:
\begin{enumerate}
\item 
\label{tc:1}
$G$ is hyperlinear,
\item 
\label{tc:2}
every existential sentence $\sigma$ of the first-order theory of $G$ satisfies $\sigma^{U}=0$, that is, belongs to the continuous theory of $U$:
\[\The(G)\subseteq \Thc(U).\]
\item
\label{tc:3}
$G$ embeds into a metric ultrapower of $U$.
\end{enumerate}
\label{c:u}
\end{corollary}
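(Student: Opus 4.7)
The plan is to prove the three equivalences via $(\ref{tc:1})\Leftrightarrow(\ref{tc:3})$, moving between matrix ultraproducts and ultrapowers of $U$ by using (a) and (b), and $(\ref{tc:1})\Leftrightarrow(\ref{tc:2})$, using the $\inf$-sentence characterization of hyperlinearity from Theorem \ref{th:hypcriteria}.

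For $(\ref{tc:1})\Rightarrow(\ref{tc:3})$, I would start with a hyperlinear embedding $G\hookrightarrow\left(\prod_iU(n_i)_2\right)_{\mathcal U}$ and invoke (a) to insert each factor $U(n_i)_2$ as a metric subgroup of $U$; these inclusions assemble into an isometric group embedding of the ultraproduct into the metric ultrapower $U^{\mathcal U}=\left(\prod_iU\right)_{\mathcal U}$, and composition produces the desired embedding.

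For $(\ref{tc:3})\Rightarrow(\ref{tc:1})$, given $G\hookrightarrow U^{\mathcal V}$, I would apply (b) to obtain $U\hookrightarrow V:=\left(\prod_jU(m_j)_2\right)_{\mathcal W}$, and then take the $\mathcal V$-ultrapower to get $U^{\mathcal V}\hookrightarrow V^{\mathcal V}$. By observation (\ref{obs(3)}) of Section \ref{s:ultraprod}, applied now in the metric setting, $V^{\mathcal V}$ is itself an ultraproduct of unitary groups $U(m_j)_2$ (with respect to a product ultrafilter on $A\times J$), so composing with the given embedding of $G$ exhibits $G$ as a subgroup of such an ultraproduct, i.e., as a hyperlinear group.

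For $(\ref{tc:1})\Leftrightarrow(\ref{tc:2})$, I would use Theorem \ref{th:hypcriteria}\,(\ref{item:metric2}): $G$ is hyperlinear iff for every existential $\sigma\in\The(G)$ and every $\e>0$ one has $\tilde\sigma^{U(n)_2}<\e$ for some $n$, where $\tilde\sigma$ denotes the continuous translation of $\sigma$. The key observation is that for any metric substructure $N\hookrightarrow M$ and any $\inf$-sentence $\tilde\sigma=\inf_x\tilde\phi(x)$ one has $\tilde\sigma^M\leq\tilde\sigma^N$, since enlarging the universe can only decrease an infimum. Thus $(\ref{tc:1})\Rightarrow(\ref{tc:2})$ follows by taking $N=U(n)_2\hookrightarrow M=U$ via (a), which gives $\tilde\sigma^U\leq\tilde\sigma^{U(n)_2}<\e$ for every $\e>0$, hence $\tilde\sigma^U=0$; and $(\ref{tc:2})\Rightarrow(\ref{tc:1})$ follows by taking $N=U\hookrightarrow M=V$ via (b), which gives $\tilde\sigma^V=0$, and then invoking \L{}o\'s's theorem for continuous ultraproducts to extract, for each $\e>0$, a $\mathcal W$-large (in particular nonempty) set of indices with $\tilde\sigma^{U(n_i)_2}<\e$.

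The points requiring most care are the correct direction of the inequality comparing the value of an $\inf$-sentence in a metric substructure versus in the ambient structure, and the use of the continuous-logic \L{}o\'s theorem rather than its classical counterpart; neither is a substantive obstacle, so once Theorem \ref{th:hypcriteria} is in hand the argument goes through by straightforward bookkeeping.
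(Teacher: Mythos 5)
Your proposal is correct and follows essentially the same route as the paper's proof: the equivalence with (\ref{tc:2}) via Theorem \ref{th:hypcriteria}\,(\ref{item:metric2}) together with the monotonicity of $\inf$-sentence values under metric subgroup inclusions (in both directions, using (a) and (b)) and a continuous \L{}o\'s-type extraction of a good index, and the equivalence with (\ref{tc:3}) via the reindexing observation that an ultrapower of an ultraproduct of unitary groups is again such an ultraproduct, and conversely that an ultraproduct of the $U(n)_2$ sits inside an ultrapower of $U$. The only point worth flagging is that the monotonicity $\tilde\sigma^M\leq\tilde\sigma^N$ for $N\hookrightarrow M$ is justified here because the matrix of $\tilde\sigma$ is quantifier-free, so its value at a tuple is the same in $N$ and in $M$; this holds for translations of existential first-order sentences, which is all that is needed.
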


(Here $\Thc(U)$ denotes of course the first-order continuous logic theory of $U$.) 

\begin{proof}
(\ref{tc:1})$\Rightarrow$(\ref{tc:2}): Thanks to Theorem \ref{th:hypcriteria}, for every $\e>0$ we have $\sigma^{U}<\e$, whence the conclusion follows.
\smallskip

(\ref{tc:2})$\Rightarrow$(\ref{tc:1}): Denote, for simplicity, by $P$ a metric ultraproduct of the unitary groups of finite rank containing $U$ as a metric subgroup.
If $\sigma\in \The(G)$ and $\e>0$ is any, then we have $\sigma^{P}<\e$ and a by now standard argument using a lift of the monomorphism $G\hookrightarrow U\hookrightarrow P$ to the direct product of unitary groups implies the existence of $n$ with $\sigma^{U(n)_2}<\e$. Now we conclude by Theorem \ref{th:hypcriteria}.
\smallskip

(\ref{tc:1})$\iff$(\ref{tc:3}): It is enough to notice that every metric ultrapower of $U$ is contained in some metric ultraproduct of unitary groups of finite rank, via a rather straightforward reindexing procedure, and vice versa.
\end{proof}

Here is just one example of a group $U$ as above, and the most economical one.

\begin{example}
The group monomorphism
\[U(n)_2\ni u\mapsto  \left( \begin{array}{cc}
u & 0 \\
0& u
\end{array}\right)\in U(2n)_2\]
is an isometry with regard to the normalized Hilbert-Schmidt distances on both groups. It generates an increasing chain of unitary groups
\[U(1)_2<U(2)_2<\ldots<U(2^n)_2<U(2^{n+1})_2<\ldots.\]
The union of the chain,  $\cup_{n=1}^{\infty} U(2^n)$, is a group which supports a naturally defined bi-invariant Hilbert-Schmidt metric.
The completion of this group is a Polish group, denoted 
$U(R)$ and called, in full, the ``unitary group of the hyperfinite factor $R$ of type $II_1$ equipped with the ultraweak topology.'' Regarded as a metric group, $U(R)$ clearly satisfies the hypothesis of Corollary \ref{c:u}. 
\end{example}

It remains unknown whether every group is hyperlinear, and this is presently one of the main open questions of the theory.

The origin of the concept of a hyperlinear group is described in the survey \cite{pestov}, \S 7, whose duplication we try to avoid inasmuch as possible. In brief, it is motivated by Connes' Embedding Conjecture \cite{connes-injective}, which states that every von Neumann factor of type $II_1$ embeds into an ultrapower of $R$, the (unique) hyperfinite factor of type $II_1$, traditionally denoted $R^\omega$. Existence of a non-hyperlinear group would imply a negative answer to Connes' Embedding Conjecture, and send far-reaching ripples. 

In a highly interesting historical remark at the beginning of a recent preprint \cite{sherman}, David Sherman brings attention to the 1954 article \cite{wright} by Fred Wright, which essentially contained a construction of the ultraproduct of von Neumann factors of type $II_1$ (the so-called {\em tracial ultraproduct}). It was done in the language of maximal ideals rather than ultrafilters, but the two approaches are equivalent. Sherman notes: ``An amusing consequence is that the tracial ultraproduct is older than the ``classical'' ultraproduct from model theory 
(\L o\'s \cite{los} in 1955).'' Since the metric ultraproduct of unitary groups $\left(\prod U(n)_2\right)_{\mathcal U}$ is isomorphic, as a metric subgroup, to the unitary group of the tracial ultraproduct of finite-dimensional matrix algebras (considered by Wright as an example, {\em loco citato}), it means that the metric ultraproduct of groups considered in these notes historically made its appearance --- albeit an implicit one --- before the algebraic ultraproduct of groups, as described in Section \ref{s:ultraprod}.

\section{Sofic groups}

Sofic groups are those groups admitting models which are finite symmetric groups with the normalized Hamming distance --- that is, matrix models of a more restrictive kind, meaning that every sofic group is hyperlinear. This Section largely mirrors the preceding Section \ref{s:hyp}, and we show first examples of sofic groups towards the end.

\begin{definition}\label{defsof}
A discrete group $G$ is {\em sofic} if it is isomorphic to a subgroup of a metric ultraproduct of a suitable family of symmetric groups of finite rank with their normalized Hamming distances. 

In other words, there is a set $A$, a nonprincipal ultrafilter ${\mathcal U}$ on $A$, and a mapping $\alpha\mapsto n({\alpha})$ so that
\begin{equation*}
\label{eq:sofic}
G<\left(\prod_{\alpha} (S_{n({\alpha})},d_{hamm})\right)_{\mathcal U}.
\end{equation*}
\end{definition}

Again, one can reformulate the concept in the language of the existence of models which are finite symmetric groups with their normalized Hamming distances.

\begin{theorem}
For a group $G$, the following conditions are equivalent.
\begin{enumerate}
\item 
\label{item:sofic1}
$G$ is sofic.
\item 
\label{item:sofic2}
$G$ embeds into an ultraproduct of symmetric groups of finite rank in such a way that every two distinct elements in the image are at a distance $1$ from each other.
\item 
\label{item:sofic3}
For every finite $F\subseteq G$ and every $\e>0$, there are $n$ and an $(F,\e)$-almost homomorphism $j\colon F\to S_n$ which is uniformly injective: $d_{hamm}(j(g),j(h))\geq 10^{-10}$ whenever $g,h\in F$ and $g\neq h$. 
\item For every existential sentence $\sigma$ of the first-order theory of $G$ and each $\e>0$, there exists $n$ so that
\[\sigma^{S_n}<\e.\]
\end{enumerate}
\label{th:soficcriterion}
\end{theorem}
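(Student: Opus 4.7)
The proof will mirror the structure of Theorem \ref{th:hypcriteria} closely, so I would cycle (\ref{item:sofic1})$\Rightarrow$(\ref{item:sofic2})$\Rightarrow$(\ref{item:sofic3})$\Rightarrow$(4)$\Rightarrow$(\ref{item:sofic1}). Only the first implication requires genuine work; the remaining ones adapt the corresponding arguments from the hyperlinear case almost verbatim, replacing $U(n)_2$ by $S_n$, the normalized Hilbert--Schmidt distance by the normalized Hamming distance, and the target maximum distance $\sqrt 2$ by $1$.

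The heart of the proof is (\ref{item:sofic1})$\Rightarrow$(\ref{item:sofic2}), which is a Hamming analogue of the conjugation-representation amplification trick. The right device is the \emph{diagonal action} of $S_n$ on pairs: every $\sigma\in S_n$ determines a permutation $\sigma^{(2)}=\sigma\times\sigma$ of the set $\{1,\dots,n\}^2$, and this yields a group embedding $\iota_n\colon S_n\hookrightarrow S_{n^2}$. A pair $(i,j)$ is fixed by $\sigma^{(2)}$ iff both $i$ and $j$ are fixed by $\sigma$, so if $f(\sigma)$ denotes the number of fixed points of $\sigma$, then $\sigma^{(2)}$ has $f(\sigma)^2$ fixed points. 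Writing $d=d_{hamm}(\sigma,e)=1-f(\sigma)/n$, we obtain
\[d_{hamm}(\iota_n(\sigma),e)=1-\Bigl(\tfrac{f(\sigma)}n\Bigr)^2 = 1-(1-d)^2 = 2d-d^2.\]
By left-invariance, the same relation holds for arbitrary pairs: $d_{hamm}(\iota_n(\sigma),\iota_n(\tau))=2d-d^2$ where $d=d_{hamm}(\sigma,\tau)$. Iterating $\iota_n$, $\iota_{n^2}$, $\iota_{n^4}$, \dots\ gives embeddings $\iota_n^{(2^k)}\colon S_n\hookrightarrow S_{n^{2^k}}$ under which $1-d_k = (1-d_0)^{2^k}$; whenever $0<d_0\leq 1$, the iterated distances converge to $1$.

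With this in hand, the construction is formally identical to the hyperlinear case. Starting from an embedding $i\colon G\hookrightarrow\left(\prod_\alpha (S_{n(\alpha)},d_{hamm})\right)_{\mathcal U}$, lift $i$ to a map $\bar\imath\colon G\to\prod_\alpha S_{n(\alpha)}$; on the new index set $B=A\times\N_+$ choose an ultrafilter $\mathcal V$ projecting to $\mathcal U$ on the first coordinate and containing every cofinite fiber $\{\alpha\}\times\N_+$; then set
\[\bar\jmath_{\alpha,k}(g) = \iota_{n(\alpha)}^{(2^k)}\bigl(\bar\imath_\alpha(g)\bigr).\]
The resulting map $j\colon G\to\left(\prod_{(\alpha,k)} S_{n(\alpha)^{2^k}}\right)_{\mathcal V}$ is still a group monomorphism, and by the limiting computation above, any two distinct elements of $G$ are mapped to elements at distance exactly $1$.

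For the remaining implications: (\ref{item:sofic2})$\Rightarrow$(\ref{item:sofic3}) is a routine unpacking --- take any lift of the embedding, note that the set of indices at which $\bar\jmath_\alpha$ is both an $(F,\e)$-almost homomorphism and satisfies $d_{hamm}(\bar\jmath_\alpha(g),\bar\jmath_\alpha(h))>1-\e>10^{-10}$ for $g\neq h$ in $F$ lies in $\mathcal U$, hence is non-empty. (\ref{item:sofic3})$\Rightarrow$(4) proceeds exactly as in the hyperlinear case: put an existential sentence of $\Th(G)$ in disjunctive normal form, translate negated equalities using the bounded metric $\min\{1,d_{hamm}\}=d_{hamm}$, and observe that the $(F,\e)$-almost homomorphism on a large enough $F$ witnesses the sentence with value $<\e$ in $S_n$. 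Finally, (4)$\Rightarrow$(\ref{item:sofic1}) uses the ultrafilter-on-finite-subsets device from the proof of implication (\ref{th:classical:1})$\Rightarrow$(\ref{th:classical:4}) in Theorem \ref{th:classical}: for each finite $F\subseteq G$ and each $\e_F=1/|F|$ pick an $(F,\e_F)$-almost homomorphism with images pairwise at distance $\geq 10^{-10}$ into some $S_{n(F)}$, combine them into a map into $\left(\prod S_{n(F)}\right)_{\mathcal U}$ along an ultrafilter refining the upward order on $\mathscr P_{fin}(G)$, and verify that the induced map is an injective group homomorphism.

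The only non-formulaic step is the amplification computation above; the rest is a careful transcription of the hyperlinear argument with the appropriate numerical changes.
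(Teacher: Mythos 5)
Your overall strategy is exactly the one the paper intends: the authors explicitly decline to write the proof out, saying only that it is ``very similar to, but quite a bit easier than'' that of Theorem \ref{th:hypcriteria}, with (\ref{item:sofic1})$\Rightarrow$(\ref{item:sofic2}) being central. Your amplification computation is the correct Hamming analogue of the tensor-square trick: the diagonal action on pairs turns a normalized Hamming distance $d$ into $2d-d^2$, and iterating drives any positive distance to $1$. You have also implicitly exploited the pleasant simplification that makes the sofic case easier: there is no degeneracy at the maximal distance (if $d=1$ the amplified distance is still $1$), so the block-diagonal ``pile of rubbish'' step of the hyperlinear proof is not needed. (Minor quibble: passing from $d(\iota_n(\sigma),e)$ to arbitrary pairs uses that $\iota_n$ is a homomorphism together with bi-invariance, i.e.\ $d(\sigma,\tau)=d(\sigma\tau^{-1},e)$, not left-invariance alone.)

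There is, however, one genuine flaw in your chain of implications. In Theorem \ref{th:hypcriteria} the implication (3)$\Rightarrow$(4) starts from the \emph{strong} uniform injectivity $d>\sqrt 2-\e$, which is precisely what makes each translated atom $\neg(x=y)$ take value $<\e$; the weak $10^{-10}$ version is a separate item (5), placed \emph{after} (4) in that cycle. In the sofic theorem, item (\ref{item:sofic3}) is the weak version: it only guarantees $d_{hamm}(j(g),j(h))\geq 10^{-10}$, so the witness $j(g_1),\ldots,j(g_k)$ gives the translated sentence a value that may be as large as $1-10^{-10}$, and your step ``(\ref{item:sofic3})$\Rightarrow$(4) proceeds exactly as in the hyperlinear case'' fails as written. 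The repair is easy: either reorder the cycle as (\ref{item:sofic1})$\Rightarrow$(\ref{item:sofic2})$\Rightarrow$(4)$\Rightarrow$(\ref{item:sofic3})$\Rightarrow$(\ref{item:sofic1}), each link being the routine one you describe, or precede the step by a finitary amplification: applying $\iota^{(2^k)}$ to an $(F,\e)$-almost homomorphism yields an $(F,2^k\e)$-almost homomorphism whose separation is $1-(1-10^{-10})^{2^k}$, so choosing $k$ large and then $\e$ small recovers separation arbitrarily close to $1$. With that adjustment the argument is complete and matches the intended proof.
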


The proof is very similar to, but quite a bit easier than, that of Theorem \ref{th:hypcriteria}, with the implication (\ref{item:sofic1})$\Rightarrow$(\ref{item:sofic2}) again being central. We have chosen not to duplicate the proof which can be found in the survey \cite{pestov} of the first-named author (see Theorem 3.5). Theorem \ref{th:soficcriterion} (save condition (4)) was established by Elek and Szab\'o \cite{ES}, who were, it seems, already aware of Radulescu's result for hyperlinear groups \cite{radulescu} (that is, our Theorem \ref{th:hypcriteria}).

Every finite symmetric group $S_n$ canonically embeds into the unitary group $U(n)$, and their ditances are easily seen to satisfy:
\[d_{hamm}(\sigma,\tau) = \frac 12 \left(d_{HS}(A_\sigma,A_\tau)\right)^2.\]
Now Condition (\ref{item:sofic3}) in Theorem \ref{th:soficcriterion}, jointly with Condition (\ref{item:metric3}) in Theorem \ref{th:hypcriteria}, imply:

\begin{corollary}[Elek and Szab\'o \cite{ES}] 
Every sofic group is hyperlinear. \qed
\end{corollary}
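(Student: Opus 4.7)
My plan is to use the explicit embedding $S_n \hookrightarrow U(n)$ via permutation matrices $\sigma \mapsto A_\sigma$, together with the displayed distance identity
\[d_{hamm}(\sigma,\tau) = \tfrac{1}{2}\bigl(d_{HS}(A_\sigma,A_\tau)\bigr)^{2},\]
to lift a sofic embedding of $G$ to a hyperlinear one. Concretely, start from a sofic embedding $G \hookrightarrow (\prod_\alpha (S_{n_\alpha},d_{hamm}))_{\mathcal U}$ given by Definition \ref{defsof}, and then compose coordinatewise with the group homomorphism $S_{n_\alpha}\to U(n_\alpha)$.

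The first step is to verify that this coordinatewise composition descends to a well-defined group homomorphism
\[\Psi\colon\left(\prod_\alpha S_{n_\alpha}\right)_{\mathcal U}\longrightarrow\left(\prod_\alpha U(n_\alpha)_2\right)_{\mathcal U}.\]
For this it suffices to check that a Hamming-infinitesimal sequence in $\prod_\alpha S_{n_\alpha}$ is sent to a Hilbert--Schmidt-infinitesimal sequence of permutation matrices; the identity above gives exactly $d_{HS}(A_{\sigma_\alpha},A_{\tau_\alpha}) = \sqrt{2\,d_{hamm}(\sigma_\alpha,\tau_\alpha)}$, so infinitesimality in Hamming transfers to infinitesimality in $d_{HS}$. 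The second step is injectivity of $\Psi$: the same identity, read from right to left, shows that if $\lim_{\alpha\to{\mathcal U}} d_{HS}(A_{\sigma_\alpha},A_{\tau_\alpha})=0$ then $\lim_{\alpha\to{\mathcal U}} d_{hamm}(\sigma_\alpha,\tau_\alpha)=0$. Composing $\Psi$ with the given sofic embedding then produces the required embedding of $G$ into a metric ultraproduct of unitary groups, establishing hyperlinearity by definition.

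As a sanity check one can also phrase the argument through the ``almost homomorphism'' characterizations. Given a finite $F\subseteq G$ and $\e>0$, apply Theorem \ref{th:soficcriterion}(\ref{item:sofic3}) with tolerance $\delta:=\e^{2}/2$ to produce a uniformly injective $(F,\delta)$-almost homomorphism $j\colon F\to S_n$; then $i\colon g\mapsto A_{j(g)}$ into $U(n)_2$ satisfies $d_{HS}(i(g)i(h),i(gh))=\sqrt{2\,d_{hamm}(j(g)j(h),j(gh))}<\sqrt{2\delta}=\e$, and for $g\neq h$ in $F$ we get $d_{HS}(i(g),i(h))\geq\sqrt{2\cdot 10^{-10}}$. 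This meets condition (\ref{item:10-10}) of Theorem \ref{th:hypcriteria} with the constant $\sqrt{2}\cdot 10^{-5}$ in place of $10^{-10}$, and hence yields hyperlinearity.

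There is really no serious obstacle here: once the quadratic relation between $d_{hamm}$ and $d_{HS}$ on permutation matrices is granted (and it is stated just before the corollary), the entire argument is a routine transfer of an ultraproduct embedding along a metrically controlled group homomorphism. The only mild point requiring attention is keeping track of the square root in the distance comparison so that the constant in the uniform-injectivity condition stays bounded away from zero, which is automatic since $\sqrt{2\cdot 10^{-10}}>0$.
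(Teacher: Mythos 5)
Your proposal is correct, and its second half is essentially the paper's own proof: the paper deduces the corollary by combining the almost-homomorphism characterization of soficity (condition (3) of Theorem \ref{th:soficcriterion}) with the corresponding characterization of hyperlinearity (conditions (3)/(5) of Theorem \ref{th:hypcriteria}), using the permutation-matrix embedding $S_n\hookrightarrow U(n)$ and the identity $d_{hamm}(\sigma,\tau)=\tfrac12\,d_{HS}(A_\sigma,A_\tau)^2$ exactly as you do. Your first, ultraproduct-level argument (checking that the coordinatewise homomorphism sends Hamming-infinitesimals to Hilbert--Schmidt-infinitesimals and is injective on the quotient) is an equally valid and slightly more self-contained packaging of the same idea.
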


Again, it is unknown whether every group is sofic, or whether every hyperlinear group is sofic. 

One can also state a close analogue of Corollary \ref{c:u}.

\begin{corollary}
Let $S=(S,d)$ be a group equipped with a bi-invariant metric and satisfying two conditions:
\begin{enumerate}
\item[(a)] For every $n$, the group $S_n$, with its normalized Hamming distance, embeds into $S$ as a metric subgroup, and
\item[(b)] $S$ embeds into an ultraproduct of groups $S_n$ as a metric subgroup.
\end{enumerate}
Then the following are equivalent for an arbitrary group $G$:
\begin{enumerate}
\item 
\label{tcs:1}
$G$ is sofic,
\item 
\label{tcs:2}
every existential sentence $\sigma$ of the first-order theory of $G$ satisfies $\sigma^{S}=0$:
\[\The(G)\subseteq \Thc(S).\]
\item
\label{tcs:3}
$G$ embeds into a metric ultrapower of $S$.
\end{enumerate}
\label{c:s}
\end{corollary}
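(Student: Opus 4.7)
The plan is to mirror, nearly verbatim, the argument already given for Corollary \ref{c:u}, with the finite symmetric groups $S_n$ equipped with the normalized Hamming distance playing the role of $U(n)_2$ and Theorem \ref{th:soficcriterion} replacing Theorem \ref{th:hypcriteria}. Three implications must be verified: (\ref{tcs:1})$\Rightarrow$(\ref{tcs:2}), (\ref{tcs:2})$\Rightarrow$(\ref{tcs:1}), and (\ref{tcs:1})$\iff$(\ref{tcs:3}).

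For (\ref{tcs:1})$\Rightarrow$(\ref{tcs:2}), suppose $G$ is sofic. Given $\sigma\in\The(G)$ and $\e>0$, condition (4) of Theorem \ref{th:soficcriterion} supplies an $n$ with $\sigma^{S_n}<\e$. Since $\sigma$ is an $\inf$-sentence and hypothesis (a) asserts that $S_n$ sits inside $S$ as a metric subgroup, the infimum over $S^k$ is bounded above by the infimum over $S_n^k$, whence $\sigma^S\le\sigma^{S_n}<\e$. As $\e$ was arbitrary, $\sigma^S=0$.

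For (\ref{tcs:2})$\Rightarrow$(\ref{tcs:1}), let $P=\left(\prod_\alpha S_{n(\alpha)}\right)_{\mathcal U}$ be a metric ultraproduct of symmetric groups containing $S$ as a metric subgroup, as furnished by hypothesis (b). Fix $\sigma\in\The(G)$ and $\e>0$. The assumption gives $\sigma^S=0$; since $S$ isometrically embeds into $P$ we conclude $\sigma^P\le\sigma^S=0$. Pick a tuple in $P^k$ witnessing $\sigma^P<\e$ and lift it coordinate-wise to $\prod_\alpha S_{n(\alpha)}^k$. By uniform continuity of the quantifier-free matrix, together with the definition of the predicate values in the metric ultraproduct (the metric analogue of \L o\'s's theorem for $\inf$-sentences), the set of indices $\alpha$ at which the quantifier-free formula takes value strictly less than $\e$ belongs to $\mathcal U$, hence is non-empty. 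Thus $\sigma^{S_{n(\alpha)}}<\e$ for some $\alpha$, and the implication (4)$\Rightarrow$(1) of Theorem \ref{th:soficcriterion} finally yields soficity of $G$.

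The equivalence (\ref{tcs:1})$\iff$(\ref{tcs:3}) is a reindexing exercise: any metric ultrapower of $S$ sits inside a single metric ultraproduct of symmetric groups (using the embedding of $S$ into $\left(\prod S_{n(\alpha)}\right)_{\mathcal U}$ and collapsing the resulting iterated ultraproduct into one over a product index set), while conversely any ultraproduct of $S_n$'s embeds into a suitable metric ultrapower of $S$ because every $S_n$ already lives inside $S$ by (a). The main obstacle, modest as it is, is the \L o\'s-type lift in the middle implication and the reindexing in the last; both are standard within the continuous-logic framework of \cite{BYBHU} and are already executed in the hyperlinear case of Corollary \ref{c:u}, so no new ideas are required here.
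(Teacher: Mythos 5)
Your proof is correct and is exactly the argument the paper intends: Corollary \ref{c:s} is stated without proof precisely because it is the verbatim adaptation of the proof of Corollary \ref{c:u}, with $S_n$ (normalized Hamming distance) in place of $U(n)_2$ and Theorem \ref{th:soficcriterion} in place of Theorem \ref{th:hypcriteria}, which is what you carried out. The three steps you give --- monotonicity of $\inf$-sentences under metric subgroup embeddings for (\ref{tcs:1})$\Rightarrow$(\ref{tcs:2}), the lift-and-\L o\'s argument for (\ref{tcs:2})$\Rightarrow$(\ref{tcs:1}), and the reindexing for (\ref{tcs:1})$\iff$(\ref{tcs:3}) --- match the paper's proof of the hyperlinear analogue point for point.
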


There exist natural examples of groups $S$ satisfying the above, and the following is, in a sense, the simplest among them.

\begin{example}
Let $\lambda$ denote the Lebesgue measure on the unit interval $[0,1]$.
Equip the group $\Aut([0,1],\lambda)$ of measure-preserving transformations with the uniform metric
\[d_{unif}(\sigma,\tau)=\lambda\{t\in [0,1]\colon \sigma(t)\neq\tau(t)\}.\]
This metric is bi-invariant, complete, and makes $\Aut([0,1],\lambda)$ into a non-separable group. 

For every $n$, realize $S_{2^n}$ as the group of measure preserving transformations of the interval $[0,1]$ whose restriction to every interval $[i2^{-n},(i+1)2^{-n}]$, $i=0,1,\ldots,2^{n-1}$, is a translation. The restriction of the uniform distance $d_{unif}$ to $S_{2^n}$ equals the normalized Hamming distance, and for every $n$
\[S_{2^n}<S_{2^{n+1}}.\]
The uniform closure of the union of the chain of subgroups $\cup_{n}S_{2^n}$ in the group $\Aut([0,1],\lambda)$ is denoted $[E_0]$. This is a Polish group equipped with a bi-invariant metric. The name for this object is somewhat long: {\em ``the full group of the hyperfinite aperiodic ergodic measure-preserving equivalence relation''.} 
It is easy to verify that the group $[E_0]$ satisfies the assumptions of Corollary \ref{c:s}. In addition, it sits naturally as a closed topological subgroup of the group $U(R)$. 

Here are just a few words of explanation of where this group and its name come from; we refer to \cite{KM} for details and references.
Let $\mathscr R$ be a Borel equivalence relation on a standard Borel space $X$ equipped with a finite measure $\mu$. 
The {\em full group} of $\mathscr R$ in the sense of Dye, denoted $[{\mathscr R}]$, is the subgroup of all non-singular transformations $\sigma$ of $(X,\mu)$ with the property $(x,\sigma(x))\in {\mathscr R}$ for $\mu$-a.e. $x$. (A transformation is non-singular if it takes null sets to null sets.)
If equipped with the uniform metric, $[{\mathscr R}]$ is a Polish group. 

The relation $\mathscr R$ is {\em hyperfinite} if it is the union of an increasing chain of relations each having finite equivalence classes, and it is {\em aperiodic} if $\mu$-almost all $\mathscr R$-equivalence classes are infinite. The relation $\mathscr R$ is ergodic if every $\mathscr R$-saturated measurable subset of $X$ is either a null set or has full measure. Finally, $\mathscr R$ is {\em measure-preserving} if the full group $[{\mathscr R}]$ consists of measure-preserving transformations.
An example of such a relation is the {\em tail equivalence relation} on the compact space $\{0,1\}^\omega$ equipped with the product of uniform measures: $xE_0y$ if and only if there is $N$ such that for all $n\geq N$, $x_n=y_n$. 
It can be proved that the group $[{\mathscr R}]$ of every hyperfinite ergodic ergodic measure-preserving equivalence relation is isometrically isomorphic to $[E_0]$ as defined above. 
\end{example}

Historically the first ever example of a hyperlinear group which is not obviously such belongs to Connes \cite{connes-injective} and, independently, Simon Wassermann \cite{wassermann}: the free non-abelian group. 
Recall that every non-abelian free group is LEF.

\begin{theorem}
Every LEF group is sofic (hence hyperlinear).
\label{th:lefsof}
\end{theorem}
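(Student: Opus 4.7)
The plan is to verify the combinatorial characterisation of soficity given in Theorem \ref{th:soficcriterion}, part (\ref{item:sofic3}). That is, given an LEF group $G$, a finite $F\subseteq G$, and an $\e>0$, I will produce an $(F,\e)$-almost homomorphism $j\colon F\to S_n$ that is uniformly injective in the required sense. In fact I expect to get a much stronger statement: $j$ will be an \emph{exact} partial homomorphism, and the pairwise distances between images of distinct elements of $F$ will be exactly $1$.

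The construction has two ingredients combined in a single line. First, invoke the LEF hypothesis to obtain a finite group $H$ and a partial monomorphism $i\colon F\to H$; by definition this is an injection satisfying $i(gh)=i(g)i(h)$ whenever $g,h,gh\in F$. Second, embed $H$ into a symmetric group via the left regular (Cayley) representation $L\colon H\hookrightarrow S_{|H|}$, $L_h(x)=hx$. Setting $n=|H|$ and $j=L\circ i$, clause (a) of condition (\ref{item:sofic3}) comes for free, because $L$ is a genuine homomorphism on all of $H$: whenever $g,h,gh\in F$,
\[ j(gh) \;=\; L_{i(gh)} \;=\; L_{i(g)i(h)} \;=\; L_{i(g)}L_{i(h)} \;=\; j(g)j(h), \]
so the Hamming defect is $0$, not merely less than $\e$.

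For clause (b) I will use the elementary observation that the left regular representation is free: if $h_1\neq h_2$ in $H$, then $h_1 x\neq h_2 x$ for every $x\in H$, hence $L_{h_1}$ and $L_{h_2}$ disagree at every point and $d_{hamm}(L_{h_1},L_{h_2})=1$. Combined with injectivity of $i$ on $F$, this yields $d_{hamm}(j(g_1),j(g_2))=1\geq 10^{-10}$ for all distinct $g_1,g_2\in F$. There is no genuine obstacle: the LEF hypothesis supplies precisely the finite exact partial model one needs, and Cayley's theorem packages it inside a symmetric group without any degradation of the metric information. The only mildly interesting point is the automatic Hamming separation, which is maximal because left translations by distinct elements coincide nowhere. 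Hyperlinearity is then a free bonus, via the Elek--Szab\'o corollary stated immediately before the theorem.
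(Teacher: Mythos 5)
Your proof is correct and is essentially the paper's own argument made explicit: the paper compresses it into the remark that LEF means every sentence of $\The(G)$ is exactly satisfied in some $S_n$ and then invokes condition (3) of Theorem \ref{th:soficcriterion}, and the content behind that remark is precisely your combination of a partial monomorphism into a finite group $H$ with the left regular representation $H\hookrightarrow S_{\abs H}$. Your key observation --- that the Hamming separation is maximal (distance exactly $1$) because left translations by distinct group elements agree nowhere --- is the same fact the paper uses implicitly, so there is nothing to add.
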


\begin{proof}
It is easily seen that the LEF property of a group $G$ is equivalent to the following: for every sentence $\sigma\in \The(G)$ there is $n$ with $\sigma^{S_n}=0$. Now Condition (\ref{item:sofic3}) of Theorem \ref{th:soficcriterion} applies.
\end{proof}

In fact, it appears that all the presently known particular examples of hyperlinear groups are at the same time known to be sofic. However, there is an interesting class of groups potentially able to distinguish between soficity and hyperlinearity and pointed out by Ozawa in \cite{ozawa}. These are wreath products $\Z_2\wr G$ where $G$ is a sofic group, that is, the semi-direct products $G\ltimes \Z_2^G$ with regard to the natural action of $G$ by permutations. That such groups are hyperlinear, follows from Theorem 2 in Elek and Lippner \cite{EL}.

The second basic class of sofic groups is that of {\em amenable} groups, and we proceed to examine it in the next section.

\section{Amenability}

Amenability has its origins in the Banach--Tarski paradox which says that we can partition a solid unit ball in the three-dimensional Euclidean space $\mathbb{R}^3$ into a finite number of pieces
(five is enough) such that by rearranging them via isometries of $\mathbb{R}^3$, we can obtain two unit solid balls in 
$\mathbb{R}^3$.
 
Expanding on this idea, we say that a group $G$ admits a {\em paradoxical decomposition} if there are pairwise disjoint subsets $A_1,A_2,\ldots, A_n$, $B_1,B_2,\ldots, B_m$ 
of $G$ and elements $g_1,g_2,\ldots,g_n,h_1,h_2,\ldots,h_m\in G$ such that 
 \[G=\bigcup_ig_iA_i=\bigcup_jh_jB_j.\]
 
 \begin{example}
The group $F_2$ admits a paradoxical decomposition.
Indeed, let $a,b$ be free generators of $F_2$. Let $w(a)$  be the set of all reduced words in $F_2$ with the first letter equal to $a$.
 Similarly define $w(a^{-1}), w(b)$ and $w(b^{-1})$.

Note that
 \begin{eqnarray*}
 F_2 & = & \{e\}\cup w(a)\cup w(a^{-1}) \cup w(b) \cup w(b^{-1})\\
  & = & w(a) \cup a\left(w(a^{-1})\right) \\
  & = & w(b) \cup b\left(w(b^{-1})\right).\\
  \end{eqnarray*}
\end{example}

\begin{theorem}
\label{amena}
For a discrete group $G$ the following are equivalent:
\begin{enumerate}
\item $G$ does \under{not} admit a paradoxical decomposition,
\item $G$ admits a finitely additive probability measure $\mu$, defined on the power set ${\mathcal P}(G)$, which is invariant under left translations,
\item $G$ admits a {\em (left)} {\em invariant mean,} i.e. a positive linear functional  
$\phi\colon \ell^\infty(G)\to \C$ satisfying $\phi(1)=1$ and invariant under left translations,
\item There is an invariant regular probability measure on the Stone-\v Cech compactification $\beta G$,
\item 
\label{item:folner}
{\em (F\o lner's condition)}: for every
finite $F\subseteq G$ and $\e>0$, there is a finite $\Phi\subseteq G$ (a {\em F\o lner set} for $F$ and $\e$) such that for each $g\in F$,
\[\vert g\Phi\bigtriangleup\Phi\vert< \e \vert\Phi\vert,\]
\item {\em (Reiter's condition (P1))}:
for every finite $F\subseteq G$ and $\e>0$, there is $f\in\ell^1(G)$ with 
$\norm{ f}_{\ell^1(G)}=1$ and such that for each $g\in F$, 
$\norm{f-\,\,^gf}_{\ell^1}<\e$. {\em (Here $^gf(x)=f(g^{-1}x)$).}
\end{enumerate}
\end{theorem}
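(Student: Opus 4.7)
The six conditions split into three natural blocks, which I would handle as follows.

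\emph{Block I: reformulations (2)$\iff$(3)$\iff$(4).} Passing from (2) to (3), set $\phi(h)=\int h\,d\mu$ for $h\in\ell^\infty(G)$; conversely $\mu(A)=\phi(\mathbf{1}_A)$. Translation-invariance transfers by construction. For (3)$\iff$(4), I would use the identification $\ell^\infty(G)\cong C(\beta G)$ and the Riesz representation theorem, noting that the left-translation action of $G$ on itself extends continuously to $\beta G$, so invariance of the mean corresponds exactly to invariance of its representing regular Borel probability measure.

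\emph{Block II: means, F\o lner, Reiter (3)$\iff$(5)$\iff$(6).} I would close the cycle (5)$\Rightarrow$(6)$\Rightarrow$(3)$\Rightarrow$(5). The first step is immediate: take $f=\mathbf{1}_\Phi/\abs{\Phi}$. For (6)$\Rightarrow$(3), to each Reiter function $f$ attach the mean $\phi_f(h)=\sum_x f(x)h(x)$; the estimate
\[\abs{\phi_f({}^g h)-\phi_f(h)}\leq\norm{f-{}^{g^{-1}}f}_{\ell^1}\cdot\norm{h}_\infty\]
shows that any weak-$*$ cluster point along a net indexed by pairs $(F,\e)$ is a left-invariant mean. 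The deep step is (3)$\Rightarrow$(5), for which I would combine Day's convexity trick with Namioka's layer-cake argument. The set of means is exactly the weak-$*$ closure in $\ell^\infty(G)^*$ of the convex set $\mathcal{P}^1$ of probability densities $f\in\ell^1_+(G)$, so for fixed finite $F\subseteq G$ the origin lies in the weak-$*$ closure of the convex set $\{(f-{}^g f)_{g\in F}:f\in\mathcal{P}^1\}\subseteq(\ell^1(G))^F$. On $(\ell^1(G))^F$ the induced weak-$*$ topology agrees with the weak topology (both are generated by duality with $(\ell^\infty(G))^F$), so Mazur's theorem places $0$ in the $\ell^1$-norm closure, yielding a Reiter function for $(F,\e)$. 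Namioka then converts this into F\o lner: writing $f=\int_0^\infty\mathbf{1}_{\{f>t\}}\,dt$ gives $\norm{f-{}^gf}_{\ell^1}=\int_0^\infty\abs{gA_t\bigtriangleup A_t}\,dt$ with $A_t=\{f>t\}$, while $\norm{f}_{\ell^1}=\int_0^\infty\abs{A_t}\,dt$; a pigeonhole over $t$ produces an $A_t$ which is a F\o lner set, once $\e$ is readjusted by the factor $\abs{F}$.

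\emph{Block III: Tarski's theorem (1)$\iff$(2).} The direction (2)$\Rightarrow$(1) is immediate, since a paradoxical decomposition would force
\[\mu(G)=\sum_i\mu(A_i)+\sum_j\mu(B_j)=\mu\bigl(\bigcup_i g_iA_i\bigr)+\mu\bigl(\bigcup_j h_jB_j\bigr)=2,\]
contradicting $\mu(G)=1$. The converse is Tarski's theorem, for which I would develop Tarski's calculus of equidecomposability types, show that non-paradoxicality of $G$ makes the natural ``unit-mass'' assignment consistent on all finite partitions, and then use a Hahn--Banach style extension to produce an actual left-invariant finitely additive measure on $\mathcal{P}(G)$.

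The main obstacle is (1)$\Rightarrow$(2): Tarski's theorem rests on a genuinely combinatorial core (essentially a version of Hall's marriage theorem, hidden in the type calculus) and lies well beyond routine functional analysis. Within Block II, the most delicate conceptual point is the Day--Namioka passage from weak-$*$ approximation of an invariant mean by $\ell^1$-densities to $\ell^1$-\emph{norm} approximation, which is what upgrades the abstract invariance statement (3) into the concrete combinatorial conditions (5) and (6).
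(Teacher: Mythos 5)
Your proposal is correct in outline, but it takes a genuinely different — and less economical — route than the paper. The paper proves the single cycle $(2)\Rightarrow(1)\Rightarrow(5)\Rightarrow(6)\Rightarrow(3)\Rightarrow(2)$ together with $(3)\Leftrightarrow(4)$, and the only substantial step in that cycle is $(1)\Rightarrow(5)$, proved in contrapositive: if F\o lner's condition fails for some $(F,\e)$, one builds a bipartite graph on $G\sqcup G$ with edges given by multiplication by elements of $F^k$, verifies the expansion hypothesis $\abs{\Gamma(X)}\geq 2\abs X$, and applies Hall's $(2,1)$-matching theorem to extract two injections whose level sets $\Omega_{s,t}$ assemble into a paradoxical decomposition (this is the Deuber--Simonovits--S\'os argument). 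Orienting the cycle this way buys two things at once that your decomposition pays for separately: it eliminates the hard direction of Tarski's theorem ($(1)\Rightarrow(2)$, your Block III) \emph{and} the Day--Mazur--Namioka passage from an abstract invariant mean back to Reiter/F\o lner ($(3)\Rightarrow(5)$, your Block II), replacing both by one combinatorial matching lemma. Your Block II is the classical functional-analytic route and is correct as stated: the identification of the weak-$*$ topology on $\ell^1\subseteq(\ell^\infty)^*$ with the weak topology $\sigma(\ell^1,\ell^\infty)$, Mazur's theorem on convex sets, and the layer-cake identity $\norm{f-{}^gf}_{\ell^1}=\int_0^\infty\abs{gA_t\bigtriangleup A_t}\,dt$ with a pigeonhole over $t$ are all sound. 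The one place where your write-up falls short of a proof is $(1)\Rightarrow(2)$: ``develop Tarski's calculus of equidecomposability types \ldots and a Hahn--Banach style extension'' names the theorem rather than proving it, and as you yourself note its combinatorial core is again a matching/marriage argument — which is precisely the ingredient the paper isolates and proves (it includes a self-contained induction proof of the $(2,1)$-matching theorem) while discarding the rest of the type calculus. So either supply Tarski's theorem in full, or reroute the cycle as the paper does so that you never need it.
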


A countable group is called {\em amenable} if it satisfies one of the equivalent  conditions of Theorem \ref{amena}. 

\begin{proof}
(2)$\Rightarrow$(1): It is clear that the presence of finitely additive measure invariant under left translations precludes the possibility of a paradoxical decomposition.

(3)$\Rightarrow$(2): Put $\mu(A)=\phi(\chi_A)$.

(3)$\Leftrightarrow$(4): Banach algebras $\ell^\infty(G)$ and $C(\beta G)$ are canonically isomorphic, and the isomorphism preserves the action of $G$ by isometries. Positive linear functionals on $C(\beta G)$ 
correspond to regular measures on $\beta G$ via Riesz representation theorem.
Left-invariant means on $\ell^\infty(G)$ correspond to invariant regular probability measures on $C(\beta G)$.

(5)$\Rightarrow$(6): For a given $\e>0$ and $F\subseteq G$ take $\Phi$ as in (\ref{item:folner}). Now the function
\[f=\frac{\chi_\Phi}{\vert\Phi\vert}\in\ell^1(G)\] 
has the required property.

(6)$\Rightarrow$(3): 
The closed unit ball $B$ of the dual Banach space to $\ell^{\infty}(G)$ is compact in the weakest topology making all evaluation mappings $\phi\mapsto \phi(x)$ continuous, $x\in\ell^{\infty}(G)$ (the {\em Banach--Alaoglu theorem}).
The set of all means on $\ell^{\infty}(G)$ (that is, positive linear functionals $\phi$ satisfying $\phi(1)=1$) is a weak$^\ast$ closed subset of $B$, and so is weak$^{\ast}$ compact as well. Every element $f\in\ell^1(G)$ can be viewed as a bounded linear functional on $\ell^{\infty}(G)$, and so by (6), there is a net $(f_\alpha)$ of means on $\ell^\infty(G)$ such that for every $g\in G$ and $h\in\ell^\infty(G)$,
\begin{equation}
   \lim_{\alpha}\langle (^gf_\alpha-f_\alpha), h \rangle=0.   \tag{+} \label{star}
\end{equation}
(As they say, the net $(f_\alpha)$ weak$^\ast$ converges to invariance.)
Let $f$ be a weak$^\ast$ cluster point of this net, that is, a limit of a convergent subnet (which exists by weak$^\ast$ compactness).
By (\ref{star}), $f$ is invariant.
\smallskip

(1)$\Rightarrow$(5):
We need a version of classical Hall's matching theorem (for a proof, see e.g. \cite{bollobas}, Corollary III.3.11, or below).

\begin{theorem}[Hall's $(2,1)$-matching theorem]
Let $\Gamma=(V,E)=(A,B,E)$ be a bipartite graph, where $V$ denotes vertices, $E$ denotes edges, $V=A\sqcup B$. Assume the degree of every vertex in $A$ is finite. Suppose further that for every finite $X\subseteq A$, $\vert \Gamma(X)\vert\geq 2\vert X \vert$, 
where $\Gamma(X)$ denotes the set of edges having a vertex in $X$. Then there are two injections $i$ and $j$ with domains equal to $A$,
disjoint images in $B$, and such that $(a,i(a)),(a,j(a))\in E$, whenever $a\in A$.
\end{theorem}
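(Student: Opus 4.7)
The plan is to reduce the $(2,1)$-matching theorem to the classical Hall marriage theorem via a vertex-doubling trick, and then to handle the possibly infinite vertex set by a compactness argument. First I would form the auxiliary bipartite graph $\Gamma' = (A', B, E')$ with $A' = A \times \{1,2\}$ and $((a,k), b) \in E'$ precisely when $(a,b) \in E$. Every vertex in $A'$ still has finite degree, and a perfect matching $\phi \colon A' \to B$ in $\Gamma'$ immediately yields the required injections via $i(a) = \phi(a,1)$ and $j(a) = \phi(a,2)$: injectivity of $\phi$ forces each of $i,j$ to be injective and their images to be disjoint.

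Next I would verify that $\Gamma'$ satisfies the usual Hall condition $|\Gamma'(X')| \geq |X'|$ for every finite $X' \subseteq A'$. Letting $X \subseteq A$ be the projection of $X'$ onto the first coordinate, one has $\Gamma'(X') = \Gamma(X)$ and $|X'| \leq 2|X|$, so the hypothesis $|\Gamma(X)| \geq 2|X|$ of the theorem gives $|\Gamma'(X')| \geq 2|X| \geq |X'|$, as needed.

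It then remains to invoke the Hall marriage theorem for $\Gamma'$: in the finite case this is standard (induction on $|A'|$, or alternating paths), and in the infinite case one applies a compactness argument, using crucially that every vertex in $A'$ has finite degree. Concretely, for each finite $F \subseteq A'$ the neighborhood $\Gamma'(F)$ is finite, so the finite Hall theorem produces an injection $\phi_F \colon F \to B$ along edges of $\Gamma'$; the set of such locally consistent matchings sits as a closed subset of the compact product space $\prod_{a' \in A'} \Gamma'(\{a'\})$ (compact by Tychonoff, since each factor is finite), and compactness yields a globally defined injection $\phi$.

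The main obstacle is the infinite case of Hall's theorem: the doubling reduction is essentially bookkeeping, but one must not forget that Hall's theorem can fail for infinite bipartite graphs without a degree hypothesis. The finite-degree assumption on vertices of $A$ (inherited by $A'$) is exactly what makes the compactness argument go through and what connects the theorem to the amenability application, where the ``$A$''-side will be a finite subset $F \subseteq G$ with each vertex having a finite neighborhood given by translates into a F\o{}lner set.
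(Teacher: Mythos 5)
Your proof is correct, but the finite case is handled by a genuinely different argument from the one in the notes. You reduce the $(2,1)$-matching statement to the ordinary Hall marriage theorem by duplicating each vertex of $A$; the verification $\vert\Gamma'(X')\vert=\vert\Gamma(X)\vert\geq 2\vert X\vert\geq\vert X'\vert$ is right, and a matching saturating $A'=A\times\{1,2\}$ does unpack into two injections with disjoint images exactly as you describe. The notes instead prove the finite $(2,1)$-statement directly by induction on $\vert A\vert$: one passes to an edge-minimal subgraph still satisfying the hypothesis, shows that a vertex of $A$ of degree at least $3$ would force the existence of a proper ``tight'' set $S$ with $\vert\Gamma(S)\vert=2\vert S\vert$, splits the graph into the induced subgraphs on $S\sqcup\Gamma(S)$ and on the complement (both of which still satisfy the hypothesis), and applies the induction hypothesis to each piece to contradict edge-minimality. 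Your route is shorter if the classical Hall theorem is taken as known; the notes' route is self-contained and does not invoke it. The passage from finite to infinite is essentially the same idea in both treatments: you use Tychonoff compactness of $\prod_{a'}\Gamma'(\{a'\})$ together with the finite intersection property, while the notes take an ultralimit of partial matchings over the directed family of finite subsets of $A$; these are interchangeable devices, and both rest on the finite-degree hypothesis exactly as you point out. One small correction to your closing remark: in the amenability application the $A$-side of the bipartite graph is the whole (infinite) group $G$, with finite neighbourhoods $\Gamma(g)\subseteq F^k g$; it is precisely because $A$ is infinite there that the compactness step is needed at all.
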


Suppose that the F\o lner's condition fails. Fix a finite $F\subseteq G$, $\e>0$ so that for every finite $\Phi\subseteq G$ there is $g\in F$
such that
\[  \vert g\Phi\bigtriangleup\Phi\vert\geq \e \vert\Phi\vert. \]

Consider a graph  $\Gamma=(V,E)$  with $V=G\cup G$ and 
\[(g,h)\in E \Leftrightarrow \exists_{x\in F^k} \,\, h=xg,\]
where $k$ is large so that for every finite $X\subseteq A$ the condition $\vert \Gamma(X)\vert\geq 2\vert X \vert$ holds.

Apply Hall's theorem and get injections $i$ and $j$. For $s,t\in F^k$ define 
\[\Omega_{s,t}=\{g\in G\colon i(g)=sg \mbox{ and } j(g)=tg \}. \]
Then $\{s\Omega_{s,t}\colon s\in F^k\}\cup \{t\Omega_{s,t}\colon t\in F^k\}$ is a family of sets such that each two are either pairwise disjoint 
or equal. Note that 
\[\bigcup_{s\in F^k}s^{-1}\left(s\Omega_{s,t}\right)=G=\bigcup_{t\in F^k}t^{-1}\left(t\Omega_{s,t}\right).\]
This contradicts (1).
\end{proof}

The present elegant proof of the implication (1)$\Rightarrow$(5) is relatively recent, see Deuber, Simonovits and S\'os \cite{DSS}. In its present form, it only applies to discrete groups, and it would be interesting to know whether the idea can be made to work for locally compact groups as well, where the classical argument remains rather more complicated. 

For a detailed treatment of amenability and related topics in the same spirit, see the survey article \cite{dlHGCS}, containing in particular a proof of the $(2,1)$-matching theorem (\S 35). Here is a different argument.

\begin{proof}[Proof of Hall's $(2,1)$-matching theorem] 
It is enough to establish the result for finite graphs and use the standard compactness argument. (In the spirit of these notes: choose injections $i_{A^\prime}$, $j_{A^\prime}$ for every induced subgraph on vertices $A^\prime\sqcup \Gamma(A^\prime)$, where $A^\prime\subseteq A$ is finite. Choose a suitable ultrafilter $\mathcal U$ on the family of all finite subsets of $A$. The ultralimit $i(a)=\lim_{A^\prime\to{\mathcal U}}i_{}(a)$ is well-defined, simlarly for $j$, and the pair of injections $i,j$ is as desired.) 

We use induction on $n=\abs A$. For $n=1$ the result is obvious. Let now $\abs A=n+1$. Assume without loss in generality that $E$ is a minimal set of edges satisfying the assumptions of Theorem. It suffices now to verify that for every $a\in A$, $\abs{E(a)}=2$. 

Suppose towards a contradiction that there is an $a\in A$ with $E(a)\subseteq B$ containing at least three distinct points, $b,c,d$. The minimality of $E$ means none of the edges $(a,b)$, $(a,c)$ and $(a,d)$ can be removed, as witnessed by finite sets $X_b,X_c,X_d\subseteq A\setminus\{a\}$ with $\Gamma(X_b\cup\{a\})\setminus \{b\}$ containing $\leq 2\abs{X_b}+1$ points, and so on. This in particular implies $\abs{\Gamma(X_b\cup\{a\})}=2\abs{X_b}+2$, and similarly for $c$ and $d$. Since every set $X_z$, $z\in\{b,c,d\}$ must contain a point adjacent to a point in $\{b,c,d\}\setminus\{z\}$, at least one of these sets is a proper subset of $A\setminus\{a\}$. Fix a $z\in\{b,c,d\}$ with this property and denote $S=X_z\cup\{a\}$. One has $1\leq\abs S\leq n$ and $\abs{\Gamma(S)}=2\abs S$.

Let $\Gamma_1$ be the induced subgraph on vertices $S\sqcup\Gamma(S)$, and let $\Gamma_2$ be the induced subgraph on the remaining vertices of $\Gamma$, that is, $(A\setminus S)\sqcup (B\setminus \Gamma(S))$. Both $\Gamma_1$ and $\Gamma_2$ satisfy the assumptions of Theorem. For $\Gamma_1$ this is obvious: if $Y\subseteq S$, then $\Gamma_1(Y)=\Gamma(Y)$. For $\Gamma_2$, if we assume that $Z\subseteq A\setminus S$ is such that $\abs{\Gamma_2(Z)}<2\abs Z$, we get a contradiction:
\[\abs{\Gamma(S\sqcup Z)} =\abs{\Gamma(S)\cup \Gamma(Z)}=\abs{\Gamma(S)\sqcup\Gamma_2(Z)}=2\abs{S}+\abs{\Gamma_2(Z)}<2(\abs{S}+\abs Z).\]
Since the cardinality of $S$ and of $A\setminus S$ is less than $n+1$, the graphs $\Gamma_i$, $i=1,2$ admit $(2,1)$-matchings, say $i_1,j_1$ and $i_2,j_2$ respectively. The images of four mappings are all pairwise disjoint. A concatenation of $i$'s and $j$'s gives a $(2,1)$-matching of $\Gamma$, whose set of edges is strictly contained in $E$, contradicting the minimality of the latter. 
\end{proof}

\begin{theorem}
\label{th:amsof}
Every amenable group is sofic (hence hyperlinear).
\end{theorem}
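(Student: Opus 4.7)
The plan is to use the characterization of soficity given by condition (\ref{item:sofic3}) of Theorem \ref{th:soficcriterion}: it suffices to show that, given any finite $F \subseteq G$ and any $\varepsilon > 0$, there exist $n \in \N$ and a map $j \colon F \to S_n$ that is an $(F,\varepsilon)$-almost homomorphism and uniformly injective with constant, say, $1/2$. The central idea is that a F\o lner set $\Phi$ gives us a ``nearly invariant'' finite set on which $G$ acts ``almost'' by permutations, and these almost-actions are the desired partial matrix models.

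First, given $F$ and $\varepsilon$, enlarge $F$ to the finite set $F' = F \cup F F \cup F^{-1}$, and use the F\o lner condition (item (\ref{item:folner}) of Theorem \ref{amena}) to pick a finite $\Phi \subseteq G$ satisfying $\abs{g\Phi \triangle \Phi} < (\varepsilon/3)\abs{\Phi}$ for every $g \in F'$. Set $n = \abs{\Phi}$ and identify $S_n$ with the group of bijections of $\Phi$. For each $g \in F$, let $\Phi_g = \{x \in \Phi : g^{-1}x \in \Phi\} = \Phi \cap g\Phi$; by the F\o lner property, $\abs{\Phi \setminus \Phi_g} < (\varepsilon/3)\abs{\Phi}$, and similarly for the domain $\Phi \cap g^{-1}\Phi$. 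On $\Phi \cap g^{-1}\Phi$, define $\sigma_g(x) = gx$; this gives an injection into $\Phi_g \subseteq \Phi$. Extend $\sigma_g$ in an arbitrary manner to a bijection of $\Phi$, and set $j(g) = \sigma_g$.

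Now verify the two properties. For (a), take $g, h, gh \in F$. For $x$ in the set
\[\Omega = \{x \in \Phi : x \in h^{-1}\Phi, \; hx \in g^{-1}\Phi, \; x \in (gh)^{-1}\Phi\},\]
one has $\sigma_h(x) = hx$, then $\sigma_g(hx) = ghx$, and also $\sigma_{gh}(x) = ghx$, so $\sigma_g\sigma_h$ and $\sigma_{gh}$ agree on $\Omega$. The complement of $\Omega$ in $\Phi$ is contained in the union of three sets each of size at most $(\varepsilon/3)\abs{\Phi}$ (by the F\o lner property applied to $h$, $g$, and $gh$, all of which lie in $F'$), hence $d_{hamm}(\sigma_g\sigma_h, \sigma_{gh}) < \varepsilon$. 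For (b), given distinct $g, h \in F$, for every $x$ in the large set $\Phi \cap g^{-1}\Phi \cap h^{-1}\Phi$ we have $\sigma_g(x) = gx \neq hx = \sigma_h(x)$; the density of this set is at least $1 - 2\varepsilon/3$, so $d_{hamm}(\sigma_g, \sigma_h) > 1 - 2\varepsilon/3$, which (choosing $\varepsilon < 1/2$ from the outset) is well above the required threshold.

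There is no real obstruction beyond careful bookkeeping; the only subtlety is making sure that F\o lner invariance is invoked with respect to the right enlarged set $F'$ so that all three sources of ``error'' in the almost-homomorphism condition (non-invariance under $g$, under $h$, and under $gh$) are simultaneously controlled by the same $\Phi$. Since every finite $F \subseteq G$ and every $\varepsilon > 0$ are handled, condition (\ref{item:sofic3}) of Theorem \ref{th:soficcriterion} is fulfilled and $G$ is sofic; hyperlinearity then follows from the Elek--Szab\'o corollary above.
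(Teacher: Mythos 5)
Your proof is correct and follows essentially the same route as the paper: choose a F\o lner set $\Phi$ for (an enlargement of) $F$, let each $g\in F$ act by left translation on the large subset of $\Phi$ where this stays inside $\Phi$, and extend to a bijection of $\Phi$ to get a uniformly injective $(F,\e)$-almost homomorphism into $S_{\abs{\Phi}}$, then invoke condition (3) of Theorem \ref{th:soficcriterion}. Your write-up merely supplies the bookkeeping (the enlarged set $F'$ and the explicit error estimates) that the paper's two-line sketch leaves implicit.
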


\begin{proof}
Let $F\subseteq G$ be finite, and let $\e>0$. Choose a F\o lner set, $\Phi$, for the pair $(F,\e)$.

\begin{figure}[ht]
\begin{center}
\scalebox{0.25}[0.25]{\includegraphics{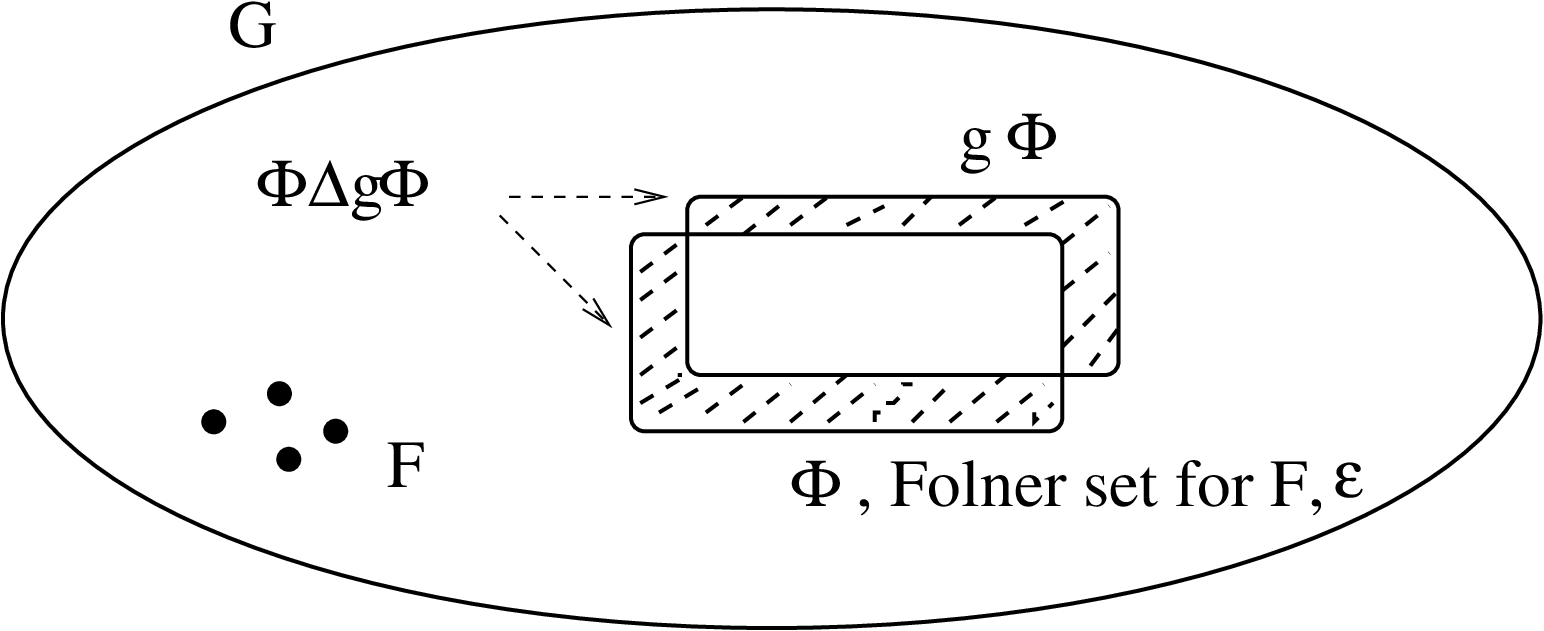}} 
\caption{A F\o lner set.}
\label{fig:folner}
\end{center}
\end{figure}

The map $x\mapsto gx$ is well-defined on a subset of $\Phi$ containing $>(1-\e)\abs{\Phi}$ points, and by extending it to a self-bijection of $\Phi$ one gets a $(F,2\e)$-homomorphism to the symmetric group $S_{\abs\Phi}$ satisfying condition (\ref{item:sofic3}) of Theorem \ref{th:soficcriterion}.
\end{proof}

The two results (Theorem \ref{th:lefsof} and Theorem \ref{th:amsof}) can be combined as follows. A group $G$ is {\em initially subamenable} (Gromov) if every finite subset $F\subseteq G$ admits an $(F,0)$-almost monomorphism into an amenable group $\Gamma$, that is, $F$ embeds into $\Gamma$ with the partial multiplication preserved. 

\begin{corollary}[Gromov]
Every initially subamenable group is sofic. 
\end{corollary}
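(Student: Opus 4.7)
The plan is to verify the sofic-ness of $G$ directly via the almost-homomorphism characterization in Theorem \ref{th:soficcriterion}, item (\ref{item:sofic3}), by composing a partial embedding into an amenable group (afforded by initial subamenability) with a sofic approximation of that amenable group (afforded by Theorem \ref{th:amsof}).

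Concretely, fix a finite $F\subseteq G$ and an $\e>0$; the goal is to produce an $(F,\e)$-almost homomorphism $F\to S_n$ that is uniformly injective with separation constant $10^{-10}$. First I would invoke initial subamenability to obtain an amenable group $\Gamma$ and a partial monomorphism $\iota\colon F\to \Gamma$, so that $\iota$ is injective and whenever $g,h,gh\in F$ one has $\iota(gh)=\iota(g)\iota(h)$. Next, enlarge $\iota(F)$ inside $\Gamma$ to a finite set $F'\subseteq\Gamma$ containing both $\iota(F)$ and all products $\iota(g)\iota(h)$ for $g,h\in F$ (this is finite since $F$ is).

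Now apply Theorem \ref{th:amsof} to $\Gamma$: since $\Gamma$ is amenable it is sofic, so by Theorem \ref{th:soficcriterion}(\ref{item:sofic3}) there exists an $n$ and a map $k\colon F'\to S_n$ which is an $(F',\e)$-almost homomorphism and which satisfies $d_{hamm}(k(x),k(y))\geq 10^{-10}$ whenever $x,y\in F'$ are distinct. Define $j=k\circ\iota\colon F\to S_n$. For $g,h\in F$ with $gh\in F$, the partial-homomorphism property of $\iota$ gives $\iota(g)\iota(h)=\iota(gh)\in F'$, and then the almost-homomorphism property of $k$ on $F'$ yields
\[
d_{hamm}\!\bigl(j(g)j(h),\,j(gh)\bigr)=d_{hamm}\!\bigl(k(\iota(g))k(\iota(h)),\,k(\iota(g)\iota(h))\bigr)<\e.
\]
For distinct $g,h\in F$, injectivity of $\iota$ gives $\iota(g)\neq\iota(h)$ in $F'$, hence $d_{hamm}(j(g),j(h))\geq 10^{-10}$. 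Thus $j$ satisfies condition (\ref{item:sofic3}) of Theorem \ref{th:soficcriterion}, and $G$ is sofic.

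There is no genuine obstacle here: the whole argument is a two-step composition, and the only thing to be mildly careful about is to include the products $\iota(g)\iota(h)$ in the finite set $F'$ for which one asks a sofic approximation of $\Gamma$, so that the $(F',\e)$-almost-homomorphism estimate for $k$ can be chained with the exact relation $\iota(gh)=\iota(g)\iota(h)$ to produce an $(F,\e)$-almost homomorphism for $j$.
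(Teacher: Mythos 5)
Your proof is correct and is exactly the argument the paper intends: the corollary is stated as a combination of the local-embedding idea with Theorem \ref{th:amsof}, and you have filled in the (omitted) details correctly, including the one genuinely necessary precaution of enlarging $\iota(F)$ to a finite set $F'$ closed under the relevant products so that the almost-homomorphism estimate for $k$ can be chained with the exact relation $\iota(gh)=\iota(g)\iota(h)$.
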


As observed (independently) by Simon Thomas and Denis Osin, no finitely presented simple non-amenable group (such as Thompson's groups $V$ and $T$, for instance) is initially subamenable. (The argument is simple, and, as pointed out to me by the author of \cite{grigorchuk}, of the same kind as that used in the paper to show that the Grigorchuk groups are not finitely presented.)
Apparently, it remains unknown whether the three Thompson's groups are sofic. Examples of sofic groups which are not initially subamenable have been given by Cornulier \cite{cornulier}. (Earlier an example of a non-initially subamenable group which is hyperlinear was presented by Thom \cite{thom}.)

\section{Universal hyperlinear and sofic groups without ultraproducts}

Metric ultraproducts of groups $U(n)_2$ can be considered as universal hyperlinear groups, and those of groups $S_n$ as universal sofic groups. They are studied from this viewpoint in \cite{thomas}, where it is shown that if the Continuum Hypothesis fails, then there exist $2^{2^{\aleph_0}}$ pairwise-nonisomorphic metric ultraproducts of groups $S_n$ over the index set of integers. 

As seen from Theorems \ref{th:hypcriteria} and \ref{th:soficcriterion}, the definitions of hyperlinear and sofic groups can be restated in a form independent from ultraproducts. In view of this, one would expect the existence of ``canonical'' universal hyperlinear/sofic groups, independent of ultraproducts. Such a construction indeed exists.

Let $(G_\alpha,d_\alpha)$ be a family of groups equipped with bi-invariant metrics. As before (Equation (\ref{eq:ellinftytypesum})), form the $\ell^{\infty}$-type direct sum $\mathscr G=\oplus^{\ell^{\infty}}_{\alpha\in A}G_{\alpha}$ of the groups $G_\alpha$; when the diameters of $G_\alpha$ are uniformly bounded from above, $\mathscr G$ is just the direct product of the groups in question, equipped with the supremum distance. 

Now define the $c_0${\em -type sum} of groups $G_{\alpha}$ by letting
\[\oplus^{c_0}_{\alpha\in A}G_{\alpha} =
\{x\in\mathscr G\colon \lim_{\alpha}d(e_\alpha,x_{\alpha})=0\}.\]
In other words, $x\in \oplus^{c_0}_{\alpha\in A}G_{\alpha}$ if and only if
\[\forall\e>0,~~\{\alpha\in A\colon d(e_{\alpha},x_{\alpha})>\e \}\mbox{ is finite.}\]
It is easily seen that $\oplus^{c_0}_{\alpha\in A}G_{\alpha}$ forms a closed normal subgroup of $\mathscr G$, and so the quotient group $\oplus^{\ell^{\infty}}_{\alpha\in A}G_{\alpha}/\oplus^{c_0}_{\alpha\in A}G_{\alpha}$ is equipped with a complete bi-invariant metric. 

To simplify the notation, we will write
\[\oplus^{\ell^{\infty}/c_0}_{\alpha\in A}G_{\alpha}=\oplus^{\ell^{\infty}}_{\alpha\in A}G_{\alpha}/\oplus^{c_0}_{\alpha\in A}G_{\alpha},\]
and call the resulting complete metric group the {\em $\ell^{\infty}/c_0$-type product} of the groups $G_{\alpha}$, $\alpha\in A$.

\begin{example}
In the case where all $G_\alpha$ are equal to the additive group of the scalar field (e.g.\, $\R$ or $\C$), the resulting metric group is just the additive group of the well-known Banach space $\ell^{\infty}/c_0$, which is isometric to the space of continuous functions on the remainder $\beta\N\setminus\N$ of the Stone-\v Cech compactification of $\N$. This motivates our terminology and  notation.
\end{example}

\begin{theorem}
Let $G$ be a countable group.
\begin{enumerate}
\item $G$ is hyperlinear if and only if $G$ is isomorphic to a subgroup of $\oplus^{\ell^{\infty}/c_0}_{n\in\N}U(n)_2$.
\item $G$ is sofic if and only if $G$ is isomorpic to a subgroup of  $\oplus^{\ell^{\infty}/c_0}_{n\in\N}S_n$.
\end{enumerate}
\end{theorem}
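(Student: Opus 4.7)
My plan is to prove both parts using the almost-homomorphism characterizations of hyperlinearity and soficity (items (\ref{item:10-10}) of Theorem~\ref{th:hypcriteria} and (\ref{item:sofic3}) of Theorem~\ref{th:soficcriterion}); the argument for part~(2) is a literal translation of the one for part~(1), replacing $U(n)_2$ by $S_n$ and block-diagonal sums of matrices by disjoint-union actions of permutations, so I focus on part~(1).

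For the forward direction, exhaust $G$ by finite sets $F_1 \subseteq F_2 \subseteq \cdots$ and select, for each $k$, an $(F_k, 1/k)$-almost homomorphism $j_k \colon F_k \to U(m_k)_2$ with uniform-injectivity constant $10^{-10}$. After replacing each $j_k$ by a block-diagonal repetition $j_k^{\oplus r}$ (which is an isometric embedding $U(m)_2 \hookrightarrow U(mr)_2$ in the normalized Hilbert--Schmidt metric), I may assume the $m_k$ are distinct positive integers. Define $\Phi \colon G \to \prod_n U(n)_2$ by placing $j_k(g)$ at the coordinate $n = m_k$ for $g \in F_k$ and the identity at every other coordinate. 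For fixed $g, h \in G$ we have $\{g, h, gh\} \subseteq F_k$ for all sufficiently large $k$, so the multiplicative defect $d(\Phi(g)_n \Phi(h)_n, \Phi(gh)_n)$ tends to $0$, and the induced map $\bar\Phi \colon G \to \oplus^{\ell^\infty/c_0}_n U(n)_2$ is a homomorphism; uniform injectivity of the $j_k$ forces $\bar\Phi$ to be injective.

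For the converse, given $\iota \colon G \hookrightarrow \oplus^{\ell^\infty/c_0}_n U(n)_2$, I verify the almost-homomorphism criterion. Fix finite $F \subseteq G$, $\epsilon_0 > 0$, and representatives $(x_n^{(g)})_n$. The quotient homomorphism property forces $d(x_n^{(g)} x_n^{(h)}, x_n^{(gh)}) \to 0$ for $g, h, gh \in F$, and injectivity of $\iota$ gives $\limsup_n d(x_n^{(g)}, x_n^{(h)}) > 0$ for each $g \neq h$ in $F$. For each distinct pair select an index $n(g, h)$ large enough to make the former quantity $< \epsilon_0$ and the latter exceed some $\delta_{g,h} > 0$, let $M$ be a common multiple of the chosen $n(g, h)$'s, and form the equidimensional block-diagonal map $j(g) = \bigoplus_{g' \neq h'} \bigl(x_{n(g', h')}^{(g)}\bigr)^{\oplus M / n(g', h')}$. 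The dimension-weighted trace formula shows that $j$ is an $(F, \epsilon_0)$-almost homomorphism with strictly positive uniform lower bound $c(F) \geq (\min \delta_{g, h}) / \sqrt K$, where $K$ is the number of distinct pairs. Since $c(F)$ depends on $F$ while item~(\ref{item:10-10}) demands a universal constant such as $10^{-10}$, I then compose $j$ with $k$-fold iterates of the conjugation representation $i_n^{(2)} \colon U(n) \to U(n^2)$ from the proof of Theorem~\ref{th:hypcriteria}: since $f(d) = d\sqrt{2 - d^2/2}$ has $\sqrt 2$ as an attracting fixed point on $(0, \sqrt 2)$, $k$-fold iteration lifts the lower bound past $10^{-10}$, and because $f^{(k)}$ is continuous at $0$ with $f^{(k)}(0) = 0$, the initial tolerance $\epsilon_0$ can be shrunk so the iterated error remains below the target $\epsilon$. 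The delicate point I anticipate is precisely this dual calibration --- choosing $k$ first to boost the lower bound and $\epsilon_0$ afterwards to control the error --- but the two choices are independent, so the construction goes through; everything else (the block-diagonal bookkeeping, and the disjoint-union analog for part~(2), which bypasses the iteration step entirely since the Hamming metric is already bounded by $1$) is routine.
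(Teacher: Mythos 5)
Your route differs from the paper's in both directions: for necessity the paper lifts to the free group $F_\infty$ and builds genuine homomorphisms $j_n\colon F_\infty\to U(k_n)$ on the generators before quotienting, while you exhaust $G$ by finite sets directly; for sufficiency the paper simply composes the embedding with the quotient maps onto the metric ultraproducts $\left(\prod_n U(n)_2\right)_{\mathcal U}$ for varying ultrafilters $\mathcal U$, checks that these separate the points of $\oplus^{\ell^{\infty}/c_0}_{n\in\N}U(n)_2$, and invokes closure of hyperlinearity under subgroups and direct products --- no calibration of constants at all. Your necessity argument is correct and, if anything, lighter than the paper's. In your sufficiency argument for part (1), however, the amplification step has a boundary defect. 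The distances $d_{HS}(j(g),j(h))$ are only known to lie in $[c(F),2]$, not in $(0,\sqrt 2)$ where you invoke attraction to $\sqrt 2$; and $f(2)=0$. Indeed the conjugation representation satisfies $i_n^{(2)}(u)=i_n^{(2)}(-u)$, so a pair with $j(h)=-j(g)$ (distance exactly $2$) is collapsed to distance $0$ and stays there, destroying injectivity, while pairs at distance slightly below $2$ are thrown arbitrarily close to $0$ by the first iterate, so choosing $k$ from $c(F)$ alone does not suffice. You need the paper's preliminary rescaling $u\mapsto\mathrm{diag}(u,{\mathbb I})$ from the proof of Theorem \ref{th:hypcriteria}, which caps all distances at $\sqrt 2$ while keeping them positive; after that your calibration of $k$ and then $\e_0$ goes through (note also that the $\delta_{g,h}$ must be fixed before $\e_0$, say as half the relevant $\limsup$, to avoid circularity --- your sketch permits this but does not say it).

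The genuine gap is in part (2). You assert that the disjoint-union analogue ``bypasses the iteration step entirely since the Hamming metric is already bounded by $1$,'' but boundedness of the metric is not what the iteration was for. Condition (\ref{item:sofic3}) of Theorem \ref{th:soficcriterion} demands the \emph{universal} separation constant $10^{-10}$, whereas the disjoint-union construction only yields $d_{hamm}(j(g),j(h))\geq\delta_{g,h}/K$ with $K$ the number of pairs --- a bound depending on $F$ that degenerates as $F$ grows, since the embedding into $\oplus^{\ell^{\infty}/c_0}_{n\in\N}S_n$ guarantees only that each $\limsup_n d_{hamm}(x_n^{(g)},x_n^{(h)})$ is positive, with no uniform lower bound. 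An $F$-dependent positive separation is not among the listed criteria, and it does not directly give injectivity of the induced map into an ultraproduct, because the separation of a fixed pair $g\neq h$ may tend to $0$ along the net of sets $F$. So an amplification step is still required; the standard one is the diagonal action of $S_n$ on $[n]^k$, under which $d_{hamm}$ transforms as $d\mapsto 1-(1-d)^k$, pushing any positive separation past $10^{-10}$ while the multiplicative defect stays below $k\e_0$. With that inserted (and it is simpler than the unitary amplification, since $1-(1-d)^k$ is monotone in $d$ with no bad endpoint), part (2) becomes the literal translation you describe.
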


\begin{proof}
We treat only the hyperlinear case. 
Write $G$ as $F_{\infty}/N$, where $N$ is a normal subgroup in the free group of countably many generators. Let $\pi\colon F_{\infty}\to G$ be the corresponding quotient homomorphism. For every $n$, denote $B_n$ the set of reduced words in $F_\infty$ of length $\leq n$ on the first $n$ generators of $F_\infty$, and let $\widetilde{B_n}=\pi(B_n)$ denote the image of $B_n$ in $G$. 

{\em Necessity.} 
Suppose $G$ is hyperlinear. For every $n$, fix a $(\widetilde{B_n},1/n^2)$-almost monomorphism $\tilde j_n$ to some unitary group $U(k_n)_2$ with images of every two distinct elements being at a distance at least $10^{-10}$. The composition $\pi\circ \tilde j_n$ is defined on the first $n$ generators of $F_{\infty}$. Extend it by the constant map $e$ over the rest of them. Denote by $j_n\colon F_{\infty}\to\prod_{n}U(n)_2$ the unique homomorphism on the free group assuming the given values on free generators. An induction on the word length using bi-invariance of the Hilbert-Schmidt metric shows that for all $x\in B_n$, one has $d(\tilde j_n(\pi(x)),j_n(x))<1/n$. In particular, for every $x\in B_n\cap N$, one has $d_{HS}(j_n(x),e)<1/n$, and if $x,y\in B_n$ and $x\neq y$, then $d(j_n(x),j_n(y))\geq 10^{-10}-2/n$. 

One can surely assume without loss of generality the unitary groups $U(k_n)$ to have distinct dimensions. Define a homomorphism $h\colon F_{\infty}\to \prod_nU(n)_2$ by setting
\[h(x)_n =\begin{cases} 
j_m(x),&\mbox{ if }n=k_m\mbox{ for some }m,\\
e,&\mbox{ otherwise.}
\end{cases}\]
This $h$ has two properties:
\begin{enumerate}
\item if $x\in N$, then $d(j_n(x),e)\to 0$ as $n\to\infty$, and therefore
$h(x)\in\oplus^{c_0}_{n\in\N}U(n)_2$,
\item if $x\neq y\mod N$, then $d(h(x),h(y))=\sup_nd(j_n(x),j_n(y))\geq 10^{-10}$.
\end{enumerate}
This means that the homomorphism $h$ taken modulo $\oplus^{c_0}_{n\in\N}U(n)_2$ factors through $N$ to determine a group monomorphism from $G$ into $\oplus^{\ell^{\infty}/c_0}_{n\in\N}U(n)_2$.

{\em Sufficiency.} Now suppose $G$ embeds as a subgroup into the $\ell^{\infty}/c_0$-type product of unitary groups.
Every non-principal ultrafilter $\mathcal U$ on the natural numbers gives rise to a quotient homomorphism from $\prod_nU(n)_2$ to the metric ultraproduct $\left(\prod_nU(n)_2\right)_{\mathcal U}$, and since  the normal subgroup $\oplus^{c_0}U(n)$ always maps to identity, we get a family of homomorphisms \[h_{\mathcal U}\colon\oplus^{\ell^{\infty}/c_0}_{n\in\N}U(n)_2\to \left(\prod_nU(n)_2\right)_{\mathcal U}.\]

Let $x\in \prod_nU(n)_2\setminus \oplus^{c_0}U(n)_2$. There are $\e>0$ and an infinite set $A\subseteq\N$ so that for all $n\in A$ one has $d_{HS,n}(x_n,e)>\e$. If now $\mathcal U$ is an ultrafilter on $\N$ containing the set $A$, then $d(h_{\mathcal U}(x),e)\geq \e$.

We have shown that the homomorphisms $h_{\mathcal U}$ separate points, and consequently the $\ell^\infty/c_0$ type sum of the unitary groups is isomorphic with a subgroup of the product of a family of hyperlinear groups. 
In order to conclude that $G$ is hyperlinear, it remains to notice that the class of hyperlinear groups is closed under passing to subgroups and direct products. The first is obvious, and for the second
embed the group $U(n)\times U(m)$ into $U(n+m)$ via block-diagonal matrices, make an adjustment for the distances, and use Theorem \ref{th:hypcriteria}.

The sofic case is completely similar.
\end{proof}

(The above result is very close in spirit to Prop. 11.1.4 in \cite{BO}.)

The above theorem can be generalized to groups $G$ of any cardinality, in which case the universal group will be the $\ell^{\infty}/c_0$-type product of the family of all groups $U(n)_2$ (respectively $S_n$), $n\in\N$, each one taken $\abs G$ times. 

In view of this result, the $\ell^\infty/c_0$-type products of metric groups deserve further attention, including from the viewpoint of the model theory of metric structures. 

In our view, it would also be interesting to know
what can be said about subgroups of $\ell^\infty/c_0$-type sums of other groups, especially the unitary groups $U(n)$, $n\in\N$, with the uniform operator metric.

\section{Sofic groups as defined by Gromov, and Gottschalk's conjecture}

Sofic groups were first defined by Gromov \cite{gromov99}, under the descriptive name of {\em groups with initially subamenable Cayley graphs} (the name ``sofic groups'' belongs to Benjy Weiss \cite{weiss}). In this Section we will finally state Gromov's original definition, and explain a motivation: the  Gottschalk's Surjunctivity Conjecture.

A directed graph $\Gamma$ is {\em edge-coloured} if there are a set $C$ of {\em colours} and a mapping $E(\Gamma)\to C$. We will also say that $\Gamma$ is {\em edge $C$-coloured}.

Here is a natural example how the edge-colouring comes about. 
Let $G$ be a finitely-generated group. Fix a finite symmetric set $V$ of generators of $G$ not containing the identity $e$. The {\em Cayley graph} of $G$ (defined by $V$) is a non-directed graph having elements of $G$ as vertices, with $(g,h)$ being adjacent if and only if $g^{-1}h\in V$, that is, there is an edge from $g$ to $h$ iff one can get to $h$ by multiplying $g$ with a generator $v\in V$ on the right. Since the generator $v$ associated to a given edge is unique, the Cayley graph is edge $V$-coloured.

The {\em word distance} in the Cayley graph of $G$ is the length of the shortest path between two vertices. It is a left-invariant metric. Notice that as finitely coloured graphs, every two closed balls of a given radius $N$ are naturally isomorphic to one another (by means of a uniquely defined left translation), which is why we will simply write $B_N$ without indicating the centre.

\begin{definition}[Gromov]
The Cayley graph of a finitely generated group $G$ is {\em initially subamenable} if for every natural $N$ and $\e>0$ there is a finite edge $V$-coloured graph $\Gamma$ with the property that for the fraction of at least $(1-\e)\abs \Gamma$ of vertices $x$ of $\Gamma$ the $N$-ball $B_N$ around $x$ is isomorphic, as an edge $V$-coloured graph, to the $N$-ball in $G$. 
\end{definition}

It can be seen directly that the above definition does not depend on the choice of a particular finite set of generators, but this will also follow from the next theorem.
The equivalence of the original Gromov's concept of a group whose Cayley graph is initially subamenable with most other definitions of soficity mentioned in these notes belongs to Elek and Szab\'o \cite{ES}. Notice that the restriction to finitely generated groups in Gromov's definition is inessential --- as it is should now be obvious to the reader, soficity is a local property in the sense that a group $G$ is sofic if and only if every finitely generated subgroup of $G$ is sofic.

\begin{theorem}
Let $G$ be a finitely generated group with a finite symmetric generating set $V$. Then $G$ is sofic if and only if the Cayley graph of $(G,V)$ is initially subamenable.
\label{th:graph}
\end{theorem}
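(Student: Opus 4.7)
The plan is to prove both directions by carefully translating between the almost-homomorphism characterization of soficity (condition (\ref{item:sofic3}) of Theorem \ref{th:soficcriterion}) and the local ball-isomorphism condition defining initial subamenability of the Cayley graph. Throughout, I identify the vertex set of a finite $V$-colored graph $\Gamma$ with $[n]$, and recall that in the Cayley graph, each word $v_{1}v_{2}\cdots v_{k}$ with $v_i\in V$ traces a unique path from any base vertex.

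\textbf{Sofic $\Rightarrow$ initially subamenable.} Given $N$ and $\e>0$, I fix $\delta>0$ small (to be chosen below) and apply Theorem \ref{th:soficcriterion}\,(\ref{item:sofic3}) to the finite set $F=B_{2N+1}\subseteq G$ and tolerance $\delta$, obtaining an $(F,\delta)$-almost homomorphism $j:F\to S_n$ that is also uniformly injective. I form $\Gamma$ on vertex set $[n]$ by, for each $v\in V$ and each $i\in[n]$, placing a $v$-colored edge from $i$ to $j(v)(i)$. Call $i\in[n]$ \emph{good} if, for all $g,h\in B_N$ with $gh\in B_{2N+1}$, one has $j(gh)(i)=j(g)(j(h)(i))$, and for all distinct $g,h\in B_N$ one has $j(g)(i)\neq j(h)(i)$. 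Each individual equation or inequation fails on at most $\delta n$ vertices; since there are boundedly many such constraints (depending only on $N$ and $|V|$), choosing $\delta$ small enough ensures the good vertices form a $(1-\e)$-fraction. For a good $i$, the map $\phi_i:B_N\to[n]$, $\phi_i(g)=j(g)(i)$, is injective by uniform injectivity, carries the $V$-colored edge $(h,vh)$ of the Cayley graph to the $v$-colored edge $(\phi_i(h),\phi_i(vh))$ by the almost-homomorphism clause, and its image is precisely $B_N(i,\Gamma)$ (surjectivity since every path in $\Gamma$ from $i$ comes from a word; no extra edges because a $v$-edge in $\Gamma$ between $\phi_i(h)$ and $\phi_i(g)$ translates, via the two good conditions, into $g=vh$ in $G$). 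Hence $\phi_i$ realizes the required color-preserving ball isomorphism.

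\textbf{Initially subamenable $\Rightarrow$ sofic.} For finite $F\subseteq G$ and $\e>0$, choose $N$ with $F\subseteq B_N$ and $\e'>0$ tiny, then pick a finite $V$-colored graph $\Gamma$ on $[n]$ such that at least $(1-\e')n$ vertices $i$ have a color-preserving isomorphism $\psi_i:B_{2N+1}(i,\Gamma)\to B_{2N+1}(e,G)$ with $\psi_i(i)=e$. For each $v\in V$, define a partial map on the good vertices by sending $i$ to the endpoint of its $v$-colored edge; this partial map is a partial bijection (its inverse is the partial map defined by $v^{-1}\in V$), so extend it arbitrarily to a full permutation $\sigma_v\in S_n$. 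For each $g\in F$ fix a word $g=v_{i_1}\cdots v_{i_k}$ of length $\le N$ and set $j(g)=\sigma_{v_{i_k}}\cdots\sigma_{v_{i_1}}$. For any good $i$ and any $h\in B_N$, an easy induction on the word length, staying inside $B_{2N+1}(i,\Gamma)$ where the $\sigma_{v}$'s exactly reproduce the Cayley-graph structure, gives $j(h)(i)=\psi_i^{-1}(h)$. Consequently, for $g,h,gh\in F$ and good $i$ one has $j(g)(j(h)(i))=\psi_i^{-1}(gh)=j(gh)(i)$, so $d_{hamm}(j(gh),j(g)j(h))\le\e'$, and for distinct $g,h\in F$ one has $j(g)(i)\neq j(h)(i)$ at every good $i$, giving $d_{hamm}(j(g),j(h))\ge 1-\e'$. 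Taking $\e'$ small enough compared to $\e$, $j$ satisfies condition (\ref{item:sofic3}) of Theorem \ref{th:soficcriterion}.

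\textbf{Main obstacle.} Both directions rely on the same discipline of union bounds: one must absorb a bounded but potentially large number of bad events (indexed by pairs and triples drawn from $B_N$ or $B_{2N+1}$) into a single ``defect'' set of vertices that still occupies a $1-\e$ fraction. The forward direction is slightly subtler because one has to rule out \emph{spurious} edges in $B_N(i,\Gamma)$ — edges present in $\Gamma$ between two images $\phi_i(g),\phi_i(h)$ that are not edges in the Cayley graph — and this is exactly what the combination of uniform injectivity and the almost-homomorphism condition is engineered to do. The reverse direction's delicate point is the definition of the permutations $\sigma_v$ at bad vertices; the clean fix is to observe that the partial map coming from the good vertices is a genuine partial bijection with ``deficit'' at most $O(\e'n)$ on each side, so an arbitrary completion to $S_n$ affects only an $O(\e')$ fraction of orbits and does not spoil the Hamming estimates.
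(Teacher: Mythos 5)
Your strategy coincides with the paper's: build $\Gamma$ on $[n]$ with a $v$-coloured edge from $i$ to $j(v)(i)$ and run a union bound to isolate a $(1-\e)$-fraction of vertices whose $N$-ball is a faithful copy of $B_N(e,G)$; conversely, read off permutations by following colour strings. Two steps, however, do not work as written. In the forward direction you invoke condition (\ref{item:sofic3}) of Theorem \ref{th:soficcriterion} and then assert that each inequation $j(g)(i)\neq j(h)(i)$ fails on at most $\delta n$ vertices. That needs $d_{hamm}(j(g),j(h))\geq 1-\delta$, whereas condition (\ref{item:sofic3}) only guarantees separation $\geq 10^{-10}$, under which the ``bad'' set for a single pair can already occupy almost all of $[n]$ and no union bound survives. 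You must first upgrade to almost homomorphisms with pairwise distances $\geq 1-\delta$, which is what condition (\ref{item:sofic2}) (embedding with pairwise distances exactly $1$) supplies after lifting; this is exactly why the paper's proof starts from images ``at distance $>1-\e/\abs{B_N}^2$'' rather than from the $10^{-10}$ bound. Your treatment of spurious edges, on the other hand, is a genuine improvement on the paper's two-line sketch.

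In the reverse direction, your $\sigma_v$ is pinned down only at good vertices and extended arbitrarily elsewhere, yet the induction requires $\sigma_{v_{i_k}}\cdots\sigma_{v_{i_1}}$ to trace the coloured path out of a good vertex $i$; the intermediate vertices of that path lie in $B_N(i,\Gamma)$ but need not themselves be good, so already the second factor may act by the arbitrary extension and leave the path. Your proposed repair (the completion affects only an $O(\e')$ fraction of orbits) does not meet this: the issue is not the measure of the completion's support but whether a good $i$ has a bad vertex inside its $N$-ball, and bounding the number of such $i$ by double counting needs degree control on $\Gamma$ that the definition of initial subamenability does not provide. The clean fix — in effect what the paper does — is to define, for each word $w$ of length $\leq N$, a single partial map $\pi_w$ by path-following on the union of the $N$-balls around good vertices: the radius-$(2N{+}1)$ ball isomorphism at a good $i$ forces every vertex within distance $2N$ of $i$ to carry a unique outgoing $v$-edge for each $v$, so $\pi_w$ is well defined and injective there, extends to some $j(w)\in S_n$, and then $j(h)(i)=\psi_i^{-1}(h)$ lies in the domain of $\pi_g$, giving $j(g)j(h)(i)=j(gh)(i)$ at every good $i$. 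With these two modifications your estimates go through.
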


\begin{proof} 
{\em Necessity.} Assume $G$ is sofic. Let $\e>0$ and $N\in\N$ be given. Choose a $(B_N,\e)$-almost monomorphism, $j$, to a permutation group $S_n$, with the property that the images of every two distinct elements of $B_N$ are at a distance $>(1-\e/\abs{B_N}^2)$ from each other. 
Define a directed graph $\Gamma$ whose vertices are integers $1,2,\ldots,n$ (i.e., elements of the set $[n]$ upon which $S_n$ acts by permutations), and $(m,k)$ is an edge coloured with a $v\in V$ if and only if $j(v)m=k$. If $v,u\in B_N$ and $v\neq u$, then for at least $(1-\e/\abs{B_N}^2)n$ vertices $m$ one has $v(m)\neq u(m)$. 
It follows that the map $v\mapsto v(m)$ is one-to-one on $B_N$, and consequently an isomorphism of $V$-coloured graphs, for at least $(1-\e)n$ vertices $m$.
\smallskip

$\Leftarrow$: In the presence of an edge-colouring, every element $w\in B_{N}$ determines a unique translation of $\Gamma$ that is well-defined at all but $<\e\abs\Gamma$ of its vertices (just follow, inside $\Gamma$, any particular string of colours leading up to $w$ in the original ball). This defines a $(B_N,\e)$-almost homomorphism into the permutation group on the vertices of $\Gamma$, which is uniformly $(1-\e)$-injective.
\end{proof}

The graphs $\Gamma$ as above can be considered as finite clones of $G$, grown artificially using some sort of genetic engineering.

Here is an open problem in topological dynamics that motivated Gromov to introduce sofic groups.
Let $G$ be a countable group, $A$ a finite set equipped with a discrete topology. The Tychonoff power $A^G$ is a Cantor space (i.e., a compact metrizable zero-dimensional space without isolated points), upon which $G$ acts by translations:
\[  (g\cdot x)(h)=x(g^{-1}h).\]
Equipped with this action of $G$ by homeomorphisms, $A^G$ is a {\em symbolic dynamical system,} or a {\em shift.}
An {\em isomorphism} between two compact $G$-spaces $X$ and $Y$ is a homeomorphism $f\colon X\to Y$ which commutes with the action of $G$:
\[f(g\cdot x) = g\cdot f(x)\mbox{ for all }g\in G,~x\in A^G.\]

\begin{conjecture}[Gottschalk's Surjunctivity Conjecture, 1973, \cite{gottschalk}]
For every countable group $G$ and every finite set $A$, the shift system $A^G$ contains no proper closed $G$-invariant subspace $X$ isomorphic to $A^G$ itself.
\label{co:gott}
\end{conjecture}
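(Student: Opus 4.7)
The final statement is an open conjecture, so this proposal is necessarily a program identifying the obstacle rather than a complete proof. The plan is to attack the full conjecture directly, bypassing the sofic special case settled by Gromov and Weiss, since that route merely transfers the problem to the open question of whether every countable group is sofic.

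First I would carry out the standard reductions. A putative counterexample is a continuous $G$-equivariant injection $f\colon A^G\to A^G$ with proper closed image $X=f(A^G)$. Uniform continuity on the compact zero-dimensional space $A^G$, together with equivariance, turns $f$ into a block code: having fixed a finite symmetric generating set $V$ (we may take $G$ finitely generated, the general case following by a direct-limit argument over finitely generated subgroups), there is an $R\in\N$ such that $f(x)(e)$ depends only on $x\restriction B_R$. Since $X$ is a proper closed $G$-invariant subset of $A^G$, there is a finite radius $S$ and a pattern $p\in A^{B_S}$ omitted on $X$. The conjecture is then the combinatorial statement that no block code on $A^G$ can be injective while omitting a finite pattern from its image.

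The next step is a counting argument. I would try to define a numerical invariant $h(Y)$ on closed $G$-invariant $Y\subseteq A^G$ satisfying: (a) $h(A^G)=\log\abs{A}$ in a suitable normalization; (b) $h(X)<h(A^G)$ whenever some finite pattern is forbidden on $X$; (c) $h$ is non-increasing under continuous $G$-equivariant injections with values in a fixed ambient shift. Any such $h$ yields the contradiction $\log\abs{A}=h(A^G)\leq h(X)<h(A^G)$. For amenable $G$ this invariant is Kolmogorov--Sinai--Ornstein--Weiss entropy, and one counts patterns along F\o lner sets; for sofic $G$ it is Bowen's sofic entropy, and one counts along the finite models supplied by Theorem \ref{th:soficcriterion}(\ref{item:sofic3}).

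The hard part, and the reason the conjecture has remained open since 1973, is precisely property (b) in the absence of any supply of finite approximating structures. Without amenability or soficity there is no canonical class of objects against which patterns may be counted, and no entropic invariant is known that detects the omission of a single finite pattern for a general countable group. I expect any complete solution must either manufacture such approximating structures for an arbitrary countable group --- a step equivalent in spirit to resolving the soficity question affirmatively --- or replace counting entirely by some genuinely new rigidity argument for the full shift that has not yet been isolated. A useful sanity check along the way would be to revisit Cornulier's non-initially-subamenable sofic examples and verify that the block-code reduction interacts cleanly with non-F\o lner approximations, which may already hint at where a sufficiently general invariant must live.
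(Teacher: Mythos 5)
You have correctly identified that the statement is Gottschalk's Surjunctivity Conjecture, which the paper itself presents as open and does not prove; there is no proof in the paper to compare against, only the sofic special case (Theorem \ref{gromov}), and your honesty on this point is the right call. Your outline of that special case --- the Curtis--Hedlund--Lyndon block-code reduction, the forbidden finite pattern on the proper image, and the counting of patterns along finite approximating structures --- is essentially the Weiss argument the paper sketches for Theorem \ref{gromov}, and your diagnosis that the sole obstruction to the general case is the absence of finite models against which to count (i.e.\ the open question of whether every group is sofic) accurately reflects the state of the problem as the paper describes it.
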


The Conjecture remains open as of time of writing these notes, and the following is the strongest result to date.

\begin{theorem}[Gromov \cite{gromov99}]
\label{gromov}
Gottschalk's Surjunctivity Conjecture holds for sofic groups.
\end{theorem}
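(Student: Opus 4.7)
The plan is to prove the contrapositive: if a continuous $G$-equivariant self-map $\phi \colon A^G \to A^G$ is not surjective, then it is not injective. By the Curtis--Hedlund--Lyndon theorem, such a $\phi$ is a \emph{cellular automaton}, determined by a finite memory set $F \ni e$ and a local rule $\mu \colon A^F \to A$ with $\phi(x)(g) = \mu\bigl((g^{-1}\!\cdot\! x)|_F\bigr)$. If $\phi(A^G) \subsetneq A^G$, compactness plus $G$-invariance yield a finite $D \subseteq G$ and a \emph{globally forbidden} pattern $p \in A^D$: no $\phi(x)$ displays $p$ on any translate $gD$. Fix $N$ with $F \cup D \subseteq B_N$ in the Cayley graph of $(G,V)$.

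The next step is to use soficity. Invoke Theorem~\ref{th:graph} with parameters $(3N, \varepsilon)$ to produce a finite edge-$V$-coloured graph $\Gamma=(W,E)$ with at least $(1-\varepsilon)|W|$ vertices $v$ being \emph{$3N$-good}, meaning their $3N$-ball in $\Gamma$ is $V$-coloured-isomorphic to $B_{3N}\subseteq G$. Transport the local rule to define $\tilde\phi \colon A^W \to A^W$ by applying $\mu$ at each good vertex (via the local identification) and a fixed default value elsewhere. The contradiction will come from a double count of $|\tilde\phi(A^W)|$.

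For the upper bound, a greedy packing inside the doubly-good part of $\Gamma$ yields a disjoint family $D_{v_1},\dots,D_{v_k}$ of good $D$-regions with $k \geq c|W|$, where $c>0$ depends only on $|B_{2N}|$. By the local identification and the global forbiddenness of $p$ in $G$, no image $\tilde\phi(y)$ displays $p$ on any $D_{v_i}$, so
\[
|\tilde\phi(A^W)| \;\leq\; |A|^{|W|}\bigl(1-|A|^{-|D|}\bigr)^{k} \;\leq\; |A|^{(1-\delta)|W|},
\]
with $\delta = -c\log_{|A|}(1-|A|^{-|D|})>0$ independent of $\varepsilon$. For the lower bound, injectivity of $\phi$ (which implies pre-injectivity) is transported to $\tilde\phi$ to force each fibre of $\tilde\phi$ to have size at most $|A|^{C\varepsilon |W|}$, with $C$ depending only on $|B_N|$; hence $|\tilde\phi(A^W)| \geq |A|^{(1-C\varepsilon)|W|}$. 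Choosing $\varepsilon < \delta/C$ contradicts the upper bound, completing the proof.

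The main obstacle is the lower bound, namely the justification that injectivity of $\phi$ on $A^G$ transfers to an almost-injectivity of $\tilde\phi$ on $A^W$. Injectivity is a global property of $A^G$, but $\mu$ is local, so the transfer goes through pre-injectivity: if two configurations in $A^W$ agree on every bad vertex together with its $N$-halo, and their $\tilde\phi$-images coincide, then lifting each into $A^G$ via a fixed background and invoking pre-injectivity of $\phi$ on each good $3N$-ball forces global equality. Making the lifting robust and keeping $C$ independent of $\varepsilon$ is the delicate technical point; it is the reason the sofic approximation is taken at radius $3N$ rather than $N$, giving enough slack both for the lifting and for the disjoint packing of $D$-regions to yield a density bounded below in terms of $|B_{2N}|$ alone.
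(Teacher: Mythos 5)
Your strategy is the same as the paper's (Weiss's counting argument): a local rule via Curtis--Hedlund--Lyndon, a globally forbidden pattern extracted from non-surjectivity, transport of the rule to a sofic approximation graph from Theorem~\ref{th:graph}, and a contradiction between an upper bound on $|\tilde\phi(A^W)|$ from a disjoint packing of forbidden windows and a lower bound from injectivity. The forbidden-pattern upper bound is fine. The gap is in the lower bound, exactly the step you flag as delicate: your lifting-plus-pre-injectivity argument does not close. Suppose $\tilde\phi(y)=\tilde\phi(y')$, the two configurations agree on all bad vertices and their $N$-halos, and they differ at some good vertex $v$. Reading the equality of images through the chart identifying $B_{3N}(v)\subseteq\Gamma$ with $B_{3N}\subseteq G$ only yields that $\phi(\hat y)$ and $\phi(\hat y')$ agree on (roughly) $B_{2N}$, the coordinates whose $F$-neighbourhoods are faithfully represented. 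But the lifts $\hat y,\hat y'$ may differ anywhere in the good region, in particular on the annulus $B_{3N}\setminus B_{2N}$, so their $\phi$-images may differ outside $B_{2N}$. Pre-injectivity lets you conclude $\hat y=\hat y'$ only when the images agree \emph{everywhere} on $G$; to guarantee that, you would need to already know that $y$ and $y'$ agree near the boundary of each good ball, which is essentially what you are trying to prove. Enlarging the radius from $3N$ to $5N$ or beyond does not break this circularity.

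The repair is the one the paper uses. Since $\Phi$ is injective and $A^G$ is compact, $\Phi^{-1}$ is a continuous $G$-equivariant map on the closed image, hence is itself given by a local rule $\Psi_0$ on some finite window $F''$. Transport \emph{both} rules to the graph, obtaining $\widetilde\Phi$ and $\widetilde\Psi$; then $\widetilde\Psi(\widetilde\Phi(y))(w)=y(w)$ at every sufficiently deeply good vertex $w$, so each fibre of $\widetilde\Phi$ is determined by its values on the at most $O(\varepsilon)|W|$ remaining vertices. This gives precisely your claimed fibre bound $|A|^{C\varepsilon|W|}$ with $C$ depending only on $|B_N|$ and manifestly independent of $\varepsilon$, and with that substitution your argument coincides with the paper's proof.
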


\begin{proof}[Sketch of the proof]
Let $\Phi\colon A^G\to A^G$ be an endomorphism of $G$-spaces, that is, such a continuous mapping that $\Phi(gf)=g(\Phi(f))$ for every $g\in G$. Consider the mapping $A^G\to A$ which sends $f\mapsto \Phi(f)(e)$. Since 
$A$ is finite, the preimage of every $a\in A$ is a clopen set. Hence there is a finite $F'\subseteq G$ and 
$\Phi_0\colon A^{F'}\to A$ such that for every $g\in G$, 
\[ \Phi(f)(g)=\Phi(g^{-1}f(e))=\Phi_0(g^{-1}f\restriction F')=\Phi_0(f\restriction gF'). \]
Assume that $\Phi$ is injective, then there is an inverse $\Psi\colon {\mathrm{image}}\,(\Phi)\to A^G$. The map $\Psi$ is determined by a certain finite $F''\subseteq G$,
and by $\Psi_0\colon {\mathrm{image}}\,(\Phi)\restriction A^{F''}\to A$. That means that both $\Phi$ and its inverse are encoded locally.

Now assume in addition that $\Phi$ is not onto. Choose a finite symmetric subset $B_1\subseteq G$ which is 
big enough both to store complete information about $\Phi$ and its inverse, and so that the restriction of $\Phi(A^G)$ to $B_1$ is not onto. From now on, without loss in generality, we can replace $G$ with a subgroup generated by $B_1$.

Grow a finite $B_1$-coloured graph $\Gamma$ whose number of vertices we will denote $N=N(\e)=\abs{V(\Gamma)}$, which locally looks like $B_{5}$ around at least $(1-\e)N$ vertices.

Using the local representations $\Phi_0$ and $\Psi_0$, we construct maps $\widetilde{\Phi},\widetilde{\Psi}\colon A^{\Gamma}\to A^{\Gamma}$. It follows that the size of the image of $\widetilde{\Phi}$ is 
at least $\abs{A}^{(1-\e)N}$. 

Now choose in $\Gamma$ a maximal system of disjoint balls of radius $1$. The number of vertices in the union of those balls is at least $cN(1-\e)$ for some $0<c<1$ (which only depends on $G$ and $B_1$). 
It follows that $image(\widetilde{\Phi})$ has size at most $\abs{A}^{N-cN(1-\e)}(\abs{A}^{\abs{B_1}}-1)^{cN(1-\e)/\abs{B_1}}$. By combining the two observations, we get:
\[\abs{A}^{(1-\e)N}\leq \abs{A}^{N-cN(1-\e)}(\abs{A}^{\abs{B_1}}-1)^{cN(1-\e)/\abs{B_1}},\]
that is,
\[\abs{A}^{(1-\e)}\leq \abs{A}^{(1-c+c\e)}(\abs{A}^{\abs{B_r}}-1)^{c/\abs{B_1}}\]
for every $\e>0$. We get a contradiction by sending $\e\downarrow 0$.
\end{proof}

The above proof belongs to Benjy Weiss and is worked out in great detail in \cite{weiss}. The original proof of Gromov \cite{gromov99} was different.


It would be interesting to know whether Gromov's theorem can be extended to hyperlinear groups.

\section{Near actions and another criterion of soficity by Elek and Szab\'o}

Let $(X,\mu)$ be a measure space, where the measure $\mu$ is at least finitely additive. A {\em near-action} of a group $G$ on $(X,\mu)$ is an assignment to $g\in G$ of a measure-preserving mapping $\tau_g\colon X\to X$ defined $\mu$-a.e. in such a way that 
\[\tau_{gh}=\tau_g\circ\tau_h\]
in the common domain of definition of both sides. A near-action is {\em essentially free} if for every $g\neq e$ and for $\mu$-a.e. $x\in X$, $\tau_g x\neq x$.

\begin{theorem}[Elek and Szab\'o \cite{ES}]
A group $G$ is sofic if and only if it admits an essentially free near-action on a set $X$ equipped with a finitely-additive probability measure $\mu$ defined on the family ${\mathscr P}(X)$ of all subsets of $X$.
\label{th:essfree}
\end{theorem}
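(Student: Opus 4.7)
My plan is to prove both directions of the equivalence; the forward direction is constructive and easy given the sofic embedding, while the reverse direction requires extracting a finite permutation model from the measure-theoretic data.

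For the forward implication, I would build the required data directly from the sofic embedding. By Theorem \ref{th:soficcriterion}(\ref{item:sofic2}), we may embed $G$ into a metric ultraproduct $\left(\prod_\alpha S_{n(\alpha)}\right)_{\mathcal U}$ so that distinct elements are at normalized Hamming distance exactly $1$. Set $X = \bigsqcup_\alpha [n(\alpha)]$, and for any $A \subseteq X$ define $\mu(A) = \lim_{\alpha \to {\mathcal U}} |A \cap [n(\alpha)]|/n(\alpha)$. This limit always exists in $[0,1]$ and yields a finitely additive probability measure on all of $\mathcal{P}(X)$. Picking any representative $(\sigma_\alpha^g)_\alpha$ of the image of $g \in G$, let $\tau_g$ act on $[n(\alpha)]$ as $\sigma_\alpha^g$; this is a genuine bijection of $X$ that preserves $\mu$ coordinatewise. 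The identity $\tau_g \tau_h = \tau_{gh}$ holds on a set of $\mu$-measure $1$ because $d_{hamm}(\sigma_\alpha^g \sigma_\alpha^h, \sigma_\alpha^{gh}) \to_{\mathcal U} 0$, and essential freeness follows from $\mu\{x : \tau_g(x) = x\} = 1 - d(g, e) = 0$ for $g \neq e$.

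For the reverse implication, I would verify condition (\ref{item:sofic3}) of Theorem \ref{th:soficcriterion}. Fix finite $F \subseteq G$ and $\epsilon > 0$, and assume $F$ is symmetric, contains $e$, and is closed under the relevant products. The first step is to produce a finite Boolean subalgebra $\mathcal{A} \subseteq \mathcal{P}(X)$ that is \emph{approximately $F$-invariant}: for every atom $P$ of $\mathcal{A}$ and every $g \in F$, $\tau_g(P)$ should differ from some atom $P'$ of $\mathcal{A}$ by a set of $\mu$-measure below a threshold $\delta \ll \epsilon$. Such $\mathcal{A}$ is built by iteratively closing an initial finite subalgebra under the operations $A \mapsto \tau_g(A)$ while discarding portions of small $\mu$-measure. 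Let $P_1, \ldots, P_k$ be its atoms, let $\sigma_g \in S_k$ record the action of $\tau_g$ on atoms, and pick a large $N$ with $m_i := \lfloor N \mu(P_i) \rfloor$; since $\mu$ is $G$-invariant, $m_i = m_{\sigma_g(i)}$, and roundoff is absorbed by a handful of auxiliary points. Amplify each $\sigma_g$ to $\tilde\sigma_g \in S_N$ by sending the $m_i$ copies of $P_i$ bijectively onto the $m_{\sigma_g(i)}$ copies of $P_{\sigma_g(i)}$. The map $g \mapsto \tilde\sigma_g$ is an $(F, \epsilon)$-almost homomorphism since $\tau_g \tau_h = \tau_{gh}$ $\mu$-a.e., and it is uniformly $(1-\epsilon)$-injective since essential freeness applied to $h^{-1}g$ gives $\mu\{x : \tau_g(x) \neq \tau_h(x)\} = 1$ for $g \neq h$, translating into Hamming distance close to $1$ between $\tilde\sigma_g$ and $\tilde\sigma_h$.

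The main obstacle is producing the approximately $F$-invariant finite Boolean subalgebra $\mathcal{A}$ in the reverse direction. Because $\langle F \rangle$ is typically infinite (even free, for generic non-amenable sofic groups), no finite Boolean subalgebra is \emph{exactly} $F$-invariant, so the construction is a kind of ``finitely-additive F\o lner argument'': one must carefully track the accumulating $\mu$-errors under iteration and stop at the right moment, exploiting the fact that the total mass is $1$ to control the number of steps. This is the technical heart of the proof and is precisely where the hypothesis that $\mu$ is defined on the \emph{full} power set $\mathcal{P}(X)$ is used, since the iteration repeatedly takes set-theoretic images under the $\tau_g$ without leaving the domain of $\mu$.
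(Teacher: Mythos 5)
Your necessity direction is fine and is essentially the paper's argument: both build $X$ as a disjoint union of finite permutation domains, take $\mu$ to be an ultralimit of normalized counting measures on the full power set, and let $G$ act blockwise; essential freeness comes from the uniform injectivity (distance $1$) of the approximations. The only cosmetic difference is that you index by the ultraproduct's index set while the paper indexes by pairs $(F,k)$.

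The sufficiency direction, however, has a genuine gap, and it is located exactly at the step you flag as the ``technical heart.'' Passing from the near-action on points of $X$ to the induced permutation $\sigma_g$ on the atoms of a finite Boolean subalgebra $\mathcal A\subseteq{\mathscr P}(X)$ destroys the information carried by essential freeness. Freeness says $\mu\{x:\tau_g x=x\}=0$ for $g\neq e$; it says nothing about $\tau_g$ moving \emph{sets} off themselves, and indeed $\tau_g(A)=A$ is perfectly compatible with $\tau_g$ fixing no point of $A$ (the extreme case being $A=X$, so that $\mathcal A=\{\emptyset,X\}$ is an \emph{exactly} invariant finite subalgebra on which every $\sigma_g$ is trivial). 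Consequently your claimed implication ``$\mu\{x:\tau_g(x)\neq\tau_h(x)\}=1$ translates into Hamming distance close to $1$ between $\tilde\sigma_g$ and $\tilde\sigma_h$'' is a non sequitur: if $\tau_{h^{-1}g}$ approximately preserves each atom setwise, then $\sigma_g=\sigma_h$ and your amplified permutations need not be uniformly separated at all, so condition (3) of Theorem \ref{th:soficcriterion} is not verified. Repairing this would require atoms that are simultaneously almost permuted \emph{without fixed atoms} by every $g\in F\setminus\{e\}$ --- a Rokhlin-tower--type statement that does not follow from the mere existence of an invariant finitely additive measure.

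The route the paper (following Elek and Szab\'o) intends keeps working with the points of $X$ rather than with a quotient algebra: for the finitely many nontrivial elements $g$ of a suitable $F^kF^{-k}$ one discards the null set where $\tau_g x=x$, so the partial action is genuinely free pointwise on a conull set; the invariant measure $\mu$ rules out a paradoxical decomposition of $X$ under the pseudogroup generated by $\{\tau_g: g\in F\}$, and the Hall-matching/F\o lner argument of Theorem \ref{amena} ((1)$\Rightarrow$(5), run in this pseudogroup setting) then produces a \emph{finite} set $\Phi\subseteq X$ with $\abs{\tau_g\Phi\bigtriangleup\Phi}<\varepsilon\abs\Phi$ for $g\in F$. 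Restricting each $\tau_g$ to $\Phi$ and extending to a bijection of $\Phi$ gives the $(F,\varepsilon)$-almost homomorphism, and uniform injectivity is now immediate because $\tau_gx\neq\tau_hx$ holds for \emph{every} point $x$ of $\Phi$, not merely $\mu$-almost everywhere. It is this pointwise use of freeness on an actual finite subset of $X$ that your Boolean-algebra construction cannot reproduce.
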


\begin{proof}[Sketch of the proof]
{\em Necessity} ($\Rightarrow$): Let $G$ be a sofic group. For every finite $F\subseteq G$ and each $k\in\N_+$ select a $(F,1/k)$-almost homomorphism $j_{(F,k)}$ with values in some permutation group $S_{n(F,k)}$, which is uniformly $(1-\e)$-injective. As usual, we think of $S_{n(F,k)}$ as the group of self-bijections of the set $[n(F,k)]=\{1,2,\ldots,n(F,k)\}$. Form a disjoint union
\[X=\bigsqcup_{F,k}\,[n(F,k)].\]
Let ${\mathcal U}$ be any ultrafilter on the directed set of all pairs $(F,k)$ containing every upper cone $\{(F,k)\colon F\supseteq F_0,~k\geq k_0\}$. 
For every $A\subseteq X$ the formula 
\[\mu(A)=\lim_{(F,k)\to{\mathcal U}}\frac{\abs{A\cap [n(F,k)]}}{n(F,k)}\]
defines a finitely-additive probability measure, $\mu$, on the power set of $X$.
Given $g\in G$, the rule
\[\tau_g(x) = j_{(F,k)}(x),\mbox{ if }x\in [n(F,k)]\mbox{ and }g\in F\]
defines $\mu$-a.e. a measure-preserving transformation of $X$. It is easy to verify that $\tau$ is an essentially free near-action of $G$.
\\[3mm]
{\em Sufficiency} ($\Leftarrow$): Here the proof follows rather closely the arguments used to establish the implications $(2)\Rightarrow(1)\Rightarrow(5)$ in Theorem \ref{amena}.
\end{proof}

The above criterion stresses yet again that soficity is a weaker version of amenability. To the best of authors' knowledge, no analogous criterion for hyperlinear groups is known yet.

\section{Discussion and further reading}
It is still hard to point to any obvious concrete candidates for examples of non-sofic or non-hyperlinear groups.

One class of groups rather allergic to amenability and its variations is formed by Kazhdan groups, or groups with property $(T)$.
Let $G$ be a group, and let $\pi\colon G\to U(\H)$ be a unitary representation. Then $\pi$ {\em admits almost invariant vectors} if for every finite
$F\subseteq G$ there are  $\e>0$ and $x\in \H$ such that $\norm{x}=1$, and for every $g\in F$, $\norm{x-\pi(g)(x)}<\e$.
A group $G$ has {\em Kazhdan's property}, or {\em property $(T)$}, if whenever a unitary representation $\pi$ of $G$ admits almost invariant vectors, it has a fixed non-zero vector. For an introduction into this vast subject, see \cite{BdlHV}. And is the simplest example of an ``allergy'' mentioned above.

\begin{theorem}\label{at}
If a group is amenable and has property $(T)$, then it is finite.
\end{theorem}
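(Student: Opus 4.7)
The plan is to pit amenability and property $(T)$ against each other on the left regular representation $\lambda\colon G\to U(\ell^2(G))$, defined by $(\lambda(g)\xi)(x)=\xi(g^{-1}x)$. The strategy has three clean steps: (i) use amenability to manufacture almost invariant unit vectors for $\lambda$; (ii) invoke property $(T)$ to upgrade these to a genuine non-zero invariant vector; (iii) observe that a $\lambda$-invariant vector in $\ell^2(G)$ must be a constant function, hence forces $\abs G<\infty$.

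For step (i), I would feed F\o lner's condition (Theorem \ref{amena}(\ref{item:folner})) directly into the $\ell^2$-picture. Given a finite $F\subseteq G$ and $\e>0$, let $\Phi$ be a F\o lner set, and set $\xi_\Phi = \abs\Phi^{-1/2}\chi_\Phi\in\ell^2(G)$. Then $\norm{\xi_\Phi}_2=1$, and since $\lambda(g)\chi_\Phi=\chi_{g\Phi}$, one computes
\[\norm{\xi_\Phi-\lambda(g)\xi_\Phi}_2^2 = \abs\Phi^{-1}\abs{g\Phi\bigtriangleup\Phi}<\e\]
for every $g\in F$. Thus $\lambda$ admits almost invariant vectors in the sense of the paper's definition of property $(T)$.

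For step (ii), property $(T)$ applied to $\lambda$ yields a non-zero vector $\xi\in\ell^2(G)$ with $\lambda(g)\xi=\xi$ for every $g\in G$. For step (iii), this invariance unfolds as $\xi(g^{-1}x)=\xi(x)$ for all $g,x\in G$, so $\xi$ is constant on $G$. A non-zero constant function belongs to $\ell^2(G)$ only when $G$ is finite, completing the proof.

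I do not anticipate a genuine obstacle: the argument is essentially a one-line tension between the Hulanicki--Reiter characterization of amenability (almost invariant vectors in the regular representation) and the defining property of Kazhdan groups. The only minor care-point is to check that the paper's definition of $(T)$ applies here, i.e.\ that the $\e$ in the definition is allowed to depend on $F$; this is the standard reading, matched by step (i) above. Once this is granted, the proof is a routine assembly of the F\o lner estimate with the definitions.
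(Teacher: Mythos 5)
Your proof is correct and is essentially the paper's own argument: the paper appeals to Reiter's condition (P2) to get almost invariant vectors in the left regular representation, and your normalized characteristic functions $\abs{\Phi}^{-1/2}\chi_\Phi$ of F\o lner sets are precisely the standard witnesses for (P2). The remaining steps (property $(T)$ produces a non-zero invariant vector, which must be constant and hence lies in $\ell^2(G)$ only for finite $G$) are exactly what the paper leaves implicit.
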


The proof follows at once from the definition combined with the following equivalent characterization of amenability:

{\em Reiter's condition (P2):} for every $F\subseteq G$ and $\e>0$, there is $f\in\ell^2(G)$ with 
$\norm{ f}_{\ell^2(G)}=1$ and such that for each $g\in F$, $\norm{ f-gf }_{\ell^2(G)}<\e$.

Here is a much more difficult result in the same vein:

\begin{theorem}[Kirchberg, Valette]
If a group with property (T) embeds into the group $U(R)$ (in particular, into its subgroup $[{\mathscr R}]$), then it is residually finite.
\end{theorem}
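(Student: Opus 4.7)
The plan is to prove residual finiteness by constructing, for each non-identity $g \in G$, a homomorphism from $G$ to a finite group which does not annihilate $g$. Since $[\mathscr R] < U(R)$, it suffices to treat the case where $G$ embeds into $U(R)$; write $\iota\colon G \hookrightarrow U(R)$ for the embedding. Fix $g \neq e$ and set $d = d_{HS}(\iota(g), e) > 0$. Recall also that a discrete group with property $(T)$ is automatically finitely generated, so $G$ is finitely generated.

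Since $U(R)$ is the completion of the chain $\bigcup_n U(2^n)$ in the normalized Hilbert--Schmidt metric, for each finite $F \subseteq G$ and each $\delta > 0$ there exist $n$ and a map $\varphi_n\colon F \to U(2^n)$ with $\norm{\varphi_n(h) - \iota(h)}_2 < \delta$ for all $h \in F$. A straightforward estimate using bi-invariance of $d_{HS}$ shows that $\varphi_n$ is then an $(F, 2\delta)$-almost homomorphism in the sense of Theorem \ref{th:hypcriteria}(\ref{item:metric3}).

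The heart of the argument is the following \emph{stability theorem} for property $(T)$ groups, due in essence to Kazhdan: for every $\e > 0$ there exist a finite subset $F_0 \subseteq G$ and a $\delta_0 > 0$ such that any $(F_0, \delta_0)$-almost homomorphism $\varphi\colon F_0 \to U(m)$ is within $\e$, in the Hilbert--Schmidt distance on $F_0$, of the restriction of an actual group homomorphism $\pi\colon G \to U(m)$. One proves this by passing from $\varphi$ to its conjugation/tensor analogue on $M_m(\C) \cong \C^m \otimes \overline{\C^m}$, producing an almost-invariant vector in a genuine unitary representation of $G$, invoking property $(T)$ to extract a true invariant vector, and using the polar or spectral decomposition of the latter to define the correction that turns $\varphi$ into the homomorphism $\pi$. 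This step is the principal obstacle and the only place where property $(T)$ is used in an essential way.

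With the stability theorem in hand, fix $\e < d/3$, let $F_0$ and $\delta_0$ be as above (arranging $g \in F_0$), and choose $n$ so large that the approximation above yields $\varphi_n\colon F_0 \to U(2^n)$ with $\norm{\varphi_n(h) - \iota(h)}_2 < \delta_0/2$ for all $h \in F_0$. Then $\varphi_n$ is an $(F_0, \delta_0)$-almost homomorphism, so stability provides a genuine homomorphism $\pi\colon G \to U(2^n)$ with $\norm{\pi(h) - \varphi_n(h)}_2 < \e$ on $F_0$, and in particular
\[\norm{\pi(g) - \iota(g)}_2 < \e + \delta_0/2 < d,\]
so $\pi(g) \neq e$. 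The image $\pi(G)$ is a finitely generated subgroup of the linear group $GL(2^n, \C)$ and is therefore residually finite by Malcev's Theorem \ref{th:malcev}. Choosing a homomorphism from $\pi(G)$ onto a finite group which does not kill $\pi(g)$ and composing with $\pi$ gives the required finite quotient of $G$ separating $g$ from the identity. Since $g$ was arbitrary, $G$ is residually finite.
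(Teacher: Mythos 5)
The paper states this theorem without proof (it is cited as a ``much more difficult result''), so there is no in-text argument to compare yours against; judged on its own merits, your proposal has a genuine gap, and it sits exactly where you locate ``the principal obstacle.'' The ``stability theorem'' you invoke --- that for a property $(T)$ group every $(F_0,\delta_0)$-almost homomorphism into $U(m)$ with the normalized Hilbert--Schmidt metric lies within $\e$ of the restriction of a genuine homomorphism $G\to U(m)$ --- is not a known theorem (and certainly not due to Kazhdan), and in the stated generality it cannot be true given facts recorded in this very section. If it held, then combining it with condition (\ref{item:metric3}) of Theorem \ref{th:hypcriteria} (which supplies $(F,\e)$-almost homomorphisms whose values at distinct points of $F$ are nearly orthogonal, hence far from the identity) and with Malcev's Theorem \ref{th:malcev} exactly as in your last paragraph would prove that \emph{every} hyperlinear group with property $(T)$ is residually finite --- with no use of the hypothesis $G<U(R)$. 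But the text immediately following this theorem cites Thom \cite{thom} for finitely generated sofic (hence hyperlinear) Kazhdan groups that are \emph{not} residually finite. So the embedding into $U(R)$ must enter in a way that your reduction to abstract almost-homomorphisms discards.

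The reason your sketch of the stability step does not work is that property $(T)$ is a statement about \emph{genuine} unitary representations, and $\mathrm{Ad}\,\varphi$ for an almost homomorphism $\varphi$ is not one; worse, almost multiplicativity of $\varphi$ in the normalized Hilbert--Schmidt metric does not even make the conjugation operators almost multiplicative on the Hilbert space $M_m(\C)$ (for that you would need to control $\norm{\varphi(g)\varphi(h)-\varphi(gh)}$ in the \emph{operator} norm, which the Hilbert--Schmidt norm does not give), so there is no genuine representation in sight to which Kazhdan's property can be applied, nor does the construction survive passage to an ultraproduct. Kirchberg's actual argument uses $\iota\colon G\hookrightarrow U(R)$ essentially: $\mathrm{Ad}\,\iota$ \emph{is} a genuine unitary representation of $G$ on $L^2(R)$ (and on the Hilbert--Schmidt operators over it), and hyperfiniteness of $R$ --- the fact that each $\iota(g)$ is $\norm{\cdot}_2$-approximated by unitaries of the subfactors $M_{2^n}\subseteq R$, all inside one space --- makes the normalized finite-rank projections onto $L^2(M_{2^n})$ almost invariant vectors for this genuine representation. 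Property $(T)$ then yields honestly invariant finite-rank projections, i.e.\ finite-dimensional $\mathrm{Ad}\,\iota(G)$-invariant subspaces of $L^2(R)$, whose associated finite-dimensional unitary representations are shown to separate points; only then does your (correct) endgame via Malcev's theorem take over. So the skeleton ``almost-invariant vector $\to$ invariant vector $\to$ finite-dimensional representation $\to$ Malcev'' is right, but packaging the middle step as a stability theorem for arbitrary Hilbert--Schmidt almost homomorphisms is where the proof breaks.
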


It is in view of such results that Ozawa asked whether every finitely generated Kazhdan group that is sofic is residually finite. A negative answer was announced by Thom \cite{thom}. Consequently, a hope to use property $(T)$ in order to construct non-hyperlinear groups is a bit diminished now, but surely not gone, as it remains in particular unknown whether finitely generated simple Kazhdan groups can be hyperlinear/sofic. 

The present notes have been organized so as to minimize an overlap with the survey \cite{pestov} by the first-named author. We recommend the survey as a useful complementary source for a number of topics which were not mentioned in the workshop lectures because of lack of time, including the origin and significance of the class of hyperlinear groups (Connes' Embedding Conjecture \cite{connes-injective,BO}), links of the present problematics with solving equations in groups, and more, as well as a longer bibliography and a number of (overwhelmingly still open) questions. Among interesting recent developments are  sofic measure-preserving equivalence relations \cite{EL} and a theory of entropy for measure-preserving actions of sofic groups \cite{bowen}.

\section*{Acknowledgements}
The first-named author thanks the Organizers of the 7$^{\mbox{\tiny th}}$ Appalachian set theory workshop, especially Ernest Schimmerling and Justin Moore, for their hospitality and patience. 
Thanks go to David Sherman for the illuminating historical remark at the end of Section \ref{s:hyp}, and to Peter Mester for correcting an oversight in the earlier version of the notes. The authors are grateful to a team of anomymous referees who have produced a most helpful report of an astonishing size (12 typed pages long).

\end{document}